\documentclass[3p,times,11pt,leqno]{elsarticle}

\usepackage{microtype}
\usepackage[english]{babel}

\usepackage{stmaryrd}
\usepackage{latexsym}
\usepackage{alltt}
\usepackage{amssymb}
\usepackage{amsmath}
\usepackage{fleqn}
\usepackage[all]{xy}
\SelectTips{eu}{}
\usepackage{xcolor}

\usepackage{pdfsync}
\usepackage[active]{srcltx}

\usepackage[colorlinks]{hyperref}








\usepackage{amsmath,amssymb}

\ifluatex


\usepackage{newtxtext}
\usepackage{tgtermes}
\usepackage[T1]{fontenc}
\usepackage{textcomp}
\usepackage{newtxmath}

\usepackage{textcase}

\fi

\usepackage{amsthm}

\usepackage{tikz}
\usetikzlibrary{positioning}
\usetikzlibrary{calc}
\usetikzlibrary{arrows}
\usetikzlibrary{intersections}
\usetikzlibrary{backgrounds}

\tikzstyle{reverseclip}=[insert path={(-99cm, -99cm) rectangle (99cm, 99cm)}]
\tikzstyle{framed}=[draw,fill=white,drop shadow,inner sep=1.3ex,outer sep=5pt]

\usepackage[inline]{enumitem}
\setlist[itemize]{
  topsep=.5em,
  parsep=2pt,
  itemsep=.5\parskip,
  labelsep=.5em
}

\setlist[enumerate,1]{
  topsep=.5em,
  parsep=2pt,
  itemsep=.5\parskip,
  labelsep=.5em
}

\setlist[description]{
  font=\normalfont\itshape
}

\raggedright 

\newtheorem{definition}{Definition}[section]
\newtheorem{example}{Example}[section]

\newtheorem{proposition}{Proposition}[section]
\newtheorem{fact}{Fact}[section]
\newtheorem{lemma}{Lemma}[section]

\newtheorem{corollary}{Corollary}[section]

\newcommand\pto{\mathrel{\ooalign{\hfil$\mapstochar$\hfil\cr$\to$\cr}}}



\newcommand{\co}{\,\colon\;}
\newcommand{\ra}{\rightarrow}
\newcommand{\Ra}{\Rightarrow}

\newcommand{\st}{\mathrm{st}}
\newcommand{\op}{\mathrm{op}}

\newcommand{\T}{\mathcal{T}\!\!}
\newcommand{\impl}{\Rightarrow}


\newcommand{\vsp}{\vspace{.5em}}

\def\C{\boldsymbol{C}}
\def\P{\mathcal{P}}
\def\I{\mathcal{I}}
\newcommand{\Mod}{\mathit{Mod}}
\newcommand{\Sen}{\mathit{Sen}}
\newcommand{\Sign}{\mathit{Sign}}

\newcommand{\Th}{\mathit{Th}}
\newcommand{\thh}{\mathit{t}}
\def\PoSet{\boldsymbol{PoSET}}
\def\Set{\boldsymbol{SET}}
\def\Setp{\boldsymbol{P\!f\!n}}
\def\CAT{\boldsymbol{C\!AT}}
\def\3/2{\frac{3}{2}}
\newcommand{\dom}[1]{\Box{#1}}
\newcommand{\cod}[1]{{#1}\Box}
\def\DOM{\mathrm{dom}}

\def\PL{\mathcal{P\!\!L}}
\def\MSA{\mathcal{M\!S\!A}}

\usepackage{array}
\usepackage{arydshln}

\setlength\dashlinedash{0.2pt}
\setlength\dashlinegap{1.5pt}
\setlength\arrayrulewidth{0.2pt}

\usepackage{setspace}

\DeclareSymbolFont{ams}{U}{msa}{m}{n}
\DeclareSymbolFontAlphabet{\mathams}{ams}

\begin{document}


\title{$\3/2$-Institutions: an institution theory for
  conceptual blending}

\author{R\u{a}zvan Diaconescu}
\ead{Razvan.Diaconescu@imar.ro}
\address{Simion Stoilow Institute of Mathematics of the Romanian Academy}
\date{\today}

\begin{abstract}
\noindent
We develop an extension of institution theory that accommodates
implicitly the partiality of the signature morphisms and its
syntactic and semantic effects.
This is driven primarily by applications to conceptual blending,
but other application domains are possible (such as software
evolution). 
The particularity of this extension is a reliance on ordered-enriched
categorical structures.  
\end{abstract}

\maketitle
\section{Introduction}

\subsection{Institution theory}

The mathematical context of our work is the theory of institutions
\cite{Goguen-Burstall:Institutions-1992} which is a three-decades-old
category-theoretic abstract model theory that traditionally has been
playing a crucial foundational role in formal
specification(e.g. \cite{sannella-tarlecki-book}).   
It has been introduced in
\cite{Goguen-Burstall:Introducing-institutions-1983} as an answer to
the explosion in the number of population of logical systems there, as 
a very general mathematical study of formal logical systems, with
emphasis on semantics (model theory), that is not committed to any
particular logical system.
Its role has gradually expanded to other areas of logic-based computer
science, most notably to declarative programming and ontologies. In
parallel, and often in interdependence to its role in computer
science, in the past fifteen years it has made important contributions
to model theory through the new area called
\emph{institution-independent model theory} \cite{iimt} -- an abstract
approach to model theory that is liberated from any commitment to
particular logical systems.  
Institutions thus allowed for a smooth, systematic, and uniform
development of model theories for unconventional logical systems, as
well as of logic-by-translation techniques and of heterogeneous
multi-logic frameworks.  

Mathematically, institution theory is based upon a category-theoretic
\cite{MacLane:Categories-for-the-Working-Mathematician-1998}
formalization of the concept of logical system that includes the
syntax, the semantics, and the satisfaction relation between them. 
As a form of abstract model theory, it is the only one that
treats all these components of a logical system fully abstractly. 
In a nutshell, the above-mentioned formalization is a
category-theoretic structure \((\Sign, \Sen, \Mod, \models)\), called
\emph{institution}, that consists of 
\begin{enumerate*}[label=(\textit{\alph*})]
  
\item a category \(\Sign\) of so-called \emph{signatures},
  
\item two functors, \(\Sen \colon \Sign \to \Set\) for the syntax,
  given by sets of so-called \emph{sentences}, and \(\Mod \colon
  \Sign^{\varominus} \to \CAT\) for the semantics, given by categories of
  so-called \emph{models}, and 
  
\item for each signature \(\Sigma\), a binary \emph{satisfaction
    relation} \(\models_{\Sigma}\) between the \(\Sigma\)-models,
  i.e.\ objects of \(\Mod(\Sigma)\), and the \(\Sigma\)-sentences,
  i.e.\ elements of \(\Sen(\Sigma)\), 

\end{enumerate*}
such that for each morphism \(\varphi \colon \Sigma \to \Sigma'\) in
the category \(\Sign\), each \(\Sigma'\)-model \(M'\), and each
\(\Sigma\)-sentence \(\rho\) the following \emph{Satisfaction
  Condition} holds: 
\[
  M' \models_{\Sigma'} \Sen(\varphi)(\rho)
  \qquad \text{if and only if} \qquad
  \Mod(\varphi)(M') \models_{\Sigma} \rho.
\]

\vspace{-\parskip}
Because of its very high level of abstraction, this definition
accommodates not only well established logical systems but also very
unconventional ones. Moreover, it has served and it may serve as a
template for defining new ones. 
Institution theory approaches logic and model theory from a
relativistic, non-substantialist perspective, quite different from the 
common reading of formal logic. 
This does not mean that institution theory is opposed to the
established logic tradition, since it rather includes it from a higher
abstraction level. 
In fact, the real difference may occur at the level of the development
methodology: top-down in the case of institution theory, versus
bottom-up in the case of traditional logic. 
Consequently, in institution theory, concepts come naturally as
presumed features that a logical system might exhibit or not, and are
defined at the most appropriate level of abstraction; in developing
results, hypotheses are kept as general as possible and introduced on
a by-need basis. 

\subsection{Conceptual blending}

Our work constitutes an effort to provide adequate mathematical
foundations to \emph{conceptual blending}, which is an important research
problem in the area of \emph{computational creativity}.
This is a relatively recent multidisciplinary science, with
contributions from/to artificial intelligence, cognitive sciences,
philosophy and arts, going back at least until to the notion of
\emph{bisociation}, presented by Arthur Koestler
\cite{Koestler:The-act-of-creation-1964}.   
Its aims are not only to construct a program that is capable of
human-level creativity, but also to achieve a better understanding and
to provide better support for it. 
Conceptual blending was proposed by Fauconnier and Turner
\cite{Fauconnier-Turner:Conceptual-Integration-Networks-1998} as a
fundamental cognitive operation of language and common-sense, modelled
as a process by which humans subconsciously combine particular
elements of two possibly conceptually distinct notions, as well as
their relations, into a unified concept in which new elements and
relations emerge. 





  
The structural aspects of this cognitive theory have been given
rigorous mathematical grounds by Goguen
\cite{Goguen:Algebraic-Semiotics-1999,Goguen:What-Is-a-Concept-2005},
based upon category theory. 
In this formal model, concepts are represented as logical theories
giving their axiomatization. 
Goguen used the algebraic specification language OBJ
\cite{Goguen-Winkler-Meseguer-Futatsugi-Jouannaud:Introducing-OBJ-2000}
to axiomatize the concepts, a language that is based upon a refined
version of equational logic; but in fact the approach is independent
of the logical formalism used (this is why category theory is
involved).  
This approach is illustrated by the diagram in
Figure~\ref{figure:blending}, which has to be read in an \emph{order-enriched 
categorical} context:  
The nodes correspond to logical theories and the arrows to theory
morphisms, but the diagram does not commute in a strict sense. 
There is only a \emph{lax} form of commutativity, meaning that the
compositions in the left- and the right-hand sides of the diagram are
both `less' than the arrow at the centre. 
The `less' comes from the fact that the arrows (to be interpreted as
theory morphisms) are subject to an ordering that reflects the fact
that they correspond to \emph{partial} rather than total mappings. 

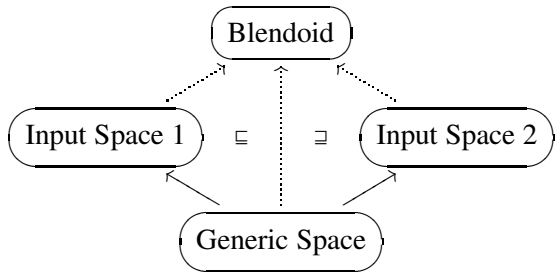
\begin{figure}
\begin{center}
  \[
    \xymatrix@R=8ex@C=6em@!0{
      &*++[F-:<13pt>]+{\text{Blendoid}}&\\
      *++[F-:<13pt>]+{\text{Input Space 1}}\ar@{.>}[ur] \ar @{} [r] |(.775){\sqsubseteq}& & *++[F-:<13pt>]+{\text{Input Space 2}}\ar@{.>}[ul] \ar @{} [l] |(.775){\sqsupseteq}\\
      &  *++[F-:<13pt>]+{\text{Generic Space}}\ar[ul]\ar[ur]\ar@{.>}[uu] &
    }
  \]
\end{center}


  
  \caption{3/2-categorical blending}
  \label{figure:blending}
\end{figure}

In the above-mentioned work by Goguen there are convincing arguments,
supported by examples, for this partiality aspect, which represents
very much a departure to a different mathematical realm than that of
logical theories (even when considered in a very general sense, as
commonly done in modern computer science).  
In category-theoretic terms, this means that we need to consider there
categories equipped with partial orders on the hom-sets that are
preserved by the compositions of arrows/morphisms. 
These are special instances of 2-categories (a rather notorious
concept), somehow half-way between ordinary categories and
2-categories; according to Goguen, this is what motivates the term
\emph{$\3/2$-category}. 
To summarise the main mathematical idea underlying theory blending as
it stands now: 
\begin{quotation}
  \noindent\itshape
  Theory blending is a cocone in a $\3/2$-category in which objects
  represent logical theories and arrows correspond to partial mappings
  between logical theories.  
\end{quotation} 
There is still a great deal of thinking on whether the cocone should
actually be a colimit (in other words, a minimal cocone) or not
necessarily.   
An understanding of this issue is that blending should not  
necessarily be thought as a colimit, but that colimits are related to
a kind of \emph{optimality} principle.  
Moreover, since $\3/2$-category theory has several different concepts
of colimits, there is still thinking about which of those is most
appropriate for modelling the blending operation. 

Goguen's ideas about theory blending benefited from an important boost
with the European FP7 project COINVENT
\cite{Schorlemmer-Smaill-Kuhnberger-Kutz-Colton-Cambouropoulos-Pease:COINVENT-2014}
that has adopted them as its foundations.  
Based on this, a creative computational system has been implemented
and demonstrated in fields like mathematics
\cite{GomezRamirez:Fields-and-Galois-Theory-through-Formal-Conceptual-Blending-MSP}
and music
\cite{Eppe-Confalonieri-Maclean-KaliakatsosPapakostas-Cambouropoulos-Schorlemmer-Codescu-Kuhnberger:Computational-Invention-of-Cadences-and-Chord-Progressions-2015}
(although both use the strict rather than the $\3/2$-version of
category theory).    

\subsection{$\3/2$-institutions}

However, the COINVENT approach still lacks crucial theoretical
features, especially a proper semantic dimension. 
Such a dimension is absolutely necessary when talking about concepts
because meaning and interpretation are central to the idea of
concept. 
For example, the idea of consistency of a concept depends on the semantics.
If one considers also the abstraction level of Goguen's approach in
its general form, of non-commitment to particular logical systems,
then \emph{the institution-theoretic dimension appears as inevitable}.   
In fact, Goguen argued for the role of institution theory in 
\cite{Goguen:Mathematical-Models-of-Cognitive-Space-and-Time-2006},
and so does the COINVENT project.  
However, institution theory cannot be used as such in a proper way
because, as it stands now, it cannot capture the partiality of theory
morphisms (which boils down to the partiality of signature
morphisms). 
Although the treatment of signatures and their morphisms as an
abstract category \(\Sign\) seems to do this, the implications of this
partiality go beyond the common concept of institution.
The the sentence translations \(\Sen(\varphi)\) ought to be allowed to
be partial rather than total functions, and that the model reducts
\(\Mod(\varphi)\) ought to be allowed to map models to \emph{sets of
  models} rather than single models. 

Therefore we define a $\3/2$-categorical extension of the concept of
institution, called  \emph{$\3/2$-institution}, that accommodates
those aspects and that starts from an abstract $\3/2$-category of 
signatures. 
Moreover, based on this, we unfold a theory of $\3/2$-institutions
aimed as a general institution theoretic foundations for conceptual
blending. 

\subsection{Other applications: the problem of merging software changes}

The diagram in Figure~\ref{figure:blending} that depicts the process
of theory blending also has an important interpretation in software
engineering: 
In large software-development projects, it often happens that a part
of the system is being modified (deleting of code also allowed) by
several different programmers concurrently, after which it is
necessary to merge the changes to form a single consistent version.  
Even cooperative distributed writing of papers or documents may fall
under this topic; writing scientific papers in \LaTeX\ certainly
qualifies, as \LaTeX\ is indeed a programming language. 
Like in the case of theory blending, a $\3/2$-categorical approach
is necessary (changes being modelled as partial mappings) 
\cite{Goguen:Categorical-Approaches-to-Merging-Software-Changes-1995}
but this is not enough because of not being able capture the semantic 
dimension of software.   
For example in order to be able to have a notion of consistency for
merges we need to enhance the approach with a model theory. 
This software engineering problem is a second application domain that
drives our development of the theory of $\3/2$-institutions.

\subsection{Contributions and Structure of the Paper}

The paper is structured as follows:
\begin{enumerate}

\item In a preliminary section we introduce some basic category
  theoretic notations and terminology, with emphasis on
  $\3/2$-categories. 

\item In a section on $\3/2$-institutions we start by recalling the
  basic concepts of (ordinary) institution theory, then we refine this
  to the concept of $\3/2$-institution, provide a collection of relevant
  examples, and develop basic $\3/2$-institution theoretic concepts
  and results on:
  \begin{itemize}

  \item \emph{$\3/2$-institutional seeds}, that constitute a simple abstract
    scheme that underlies the definition of many
    $\3/2$-institutions of interest and that provides a general
    framework for an easy derivation and understanding of important
    $\3/2$-institutional properties. 

  \item Theory morphisms, that parallels the corresponding concept
    from ordinary institution theory but only to a limited extent,
    since $\3/2$-institution theory admits several relevant concepts
    of theory morphisms. 

  \item Model amalgamation, that extends the corresponding concept
    from ordinary institution theory to $\3/2$-institutions. 

  \end{itemize}

\item We dedicate a special section to the presentation of a scheme
  for approaching conceptual blending with $\3/2$-institutions that
  essentially replaces the currently prevalent idea of looking for
  colimits of theories with another idea, of looking for lax cocones
  with model amalgamation. 
  Our scheme is supported by the mathematical results of the previous
  sections, and in addition to that it has also a number of parameters
  that makes it quite flexible in the applications. 

\end{enumerate}

\section{Category-theoretic and other preliminaries}

\subsection{Categories, monads}

In general we stick to the established category theoretic terminology
and notations, such as in 
\cite{MacLane:Categories-for-the-Working-Mathematician-1998}.
But unlike there we prefer to use the diagrammatic notation for
compositions of arrows in categories, i.e. if $f \co A \ra B$ and
$g\co B \ra C$ are arrows then $f;g$ denotes their composition. 
The domain of an arrow/morphism $f$ is denoted by $\dom{f}$ while its
codomain is denoted by $\cod{f}$. 
$\Set$ denotes the category of sets and functions and $\CAT$ the
``quasi-category'' of categories and functors.\footnote{This means it
  is bigger than a category since the hom-sets are classes rather than
  sets.}  

The \emph{dual} of a category $\C$ (obtained by formally reversing its
arrows) is denoted by $\C^\varominus$. 

Given a category $\C$, a triple $(\Delta,\delta,\mu)$ constitutes a
\emph{monad} in $\C$ when $\Delta \co \C \to \C$, and $\delta$ and
$\mu$ are natural transformations $\Delta^2 \Ra \Delta$ and $1_{\C}
\Ra \Delta$, respectively such that following diagrams commute: 
$$\xy
\xymatrix{
\Delta(\Sigma) \ar[r]^{\delta_{\Delta(\Sigma)}}
\ar[dr]_{1_{\Delta(\Sigma)}} & \Delta^2 (\Sigma) \ar[d]^{\mu_\Sigma} &
\ar[l]_{\Delta(\delta_\Sigma)} \ar[dl]^{1_{\Delta(\Sigma)}} 
\Delta(\Sigma) & \Delta^3(\Sigma) \ar[r]^{\mu_{\Delta(\Sigma)}}
  \ar[d]_{\Delta(\mu_\Sigma)} & \Delta^2 (\Sigma) \ar[d]^{\mu_\Sigma} \\  
 & \Delta(\Sigma) & & \Delta^2 (\Sigma) \ar[r]_{\mu_\Sigma} &
 \Delta(\Sigma) 
}
\endxy$$
The \emph{Kleisli category} $\C_\Delta$ of the monad
$(\Delta,\delta,\mu)$ has the objects of $\C$ but an arrow
$\theta_\Delta \co A \to B$
in $\C_\Delta$ is an arrow $\theta \co A \to \Delta(B)$ in $\C$.  
The composition in $\C_\Delta$ is defined as shown below: 
$$\xy
\xymatrix{
A \ar[d]_{\theta_\Delta} & A \ar[d]^\theta & \\
B \ar[d]_{\theta'_\Delta} & \Delta(B) \ar[d]^{\Delta(\theta')} & \\
C & \Delta^2 (C) \ar[r]_{\mu_{C}} & \Delta(C)
}
\endxy$$

The following functor extends the well known power-set functor from
sets to categories:  

\begin{definition}\label{power-cat-dfn}
The \emph{power-set functor on categories} $\P \co \CAT \ra \CAT$ is
defined as follows:  
\begin{itemize}

\item for any category $\C$,
\begin{itemize}

\item $|\P\C| = \{ A \mid A \subseteq |C| \}$ and 
$\P \C(A,B) = 
\{ H \subseteq \C \mid \dom{h} \in A, \cod{h} \in B \text{ for each }
h \in H \}$; and

\item composition is defined by 
$H_1 ; H_2 = \{ h_1 ; h_2 \mid h_1 \in H_1, h_2 \in H_2, \cod{h_1} =
\dom{h_2}  \}$;  then $1_A = \{ 1_a \mid a\in A \}$ are the identities.

\end{itemize}

\item for any functor $F \co \C \ra \C'$,  
$\P F(A) = F(A) \subseteq |\C'|$ and $\P F(H) = F(H) \subseteq \C'$.

\end{itemize}
Moroever, like in the case of sets, this construction extends to a
\emph{monad $(\P,\{\_\},\cup)$} in $\CAT$. 
Then $\CAT_{\!\!\P}$ denotes its associated Keisli category. 
\end{definition}

\subsection{Partial functions}

A \emph{partial function} $f \co A \pto B$ is a binary relation 
$f \subseteq A \times B$ such that $(a,b), (a,b') \in f$ implies 
$b = b'$. 
The \emph{definition domain} of $f$, denoted $\DOM(f)$ is the set 
$\{ a \in A \mid \exists b \ (a,b)\in f \}$. 
A partial function $f\co A \pto B$ is called \emph{total} when
$\DOM(f) = A$.  
We denote by $f^0$ the restriction of $f$ to $\DOM(f) \times B$; this
is a total function. 
Partial functions yield a subcategory of the category of binary
relations, denoted $\Setp$. 
If $A'\subseteq A$ by $f(A')$ we denote the set 
$\{ b \mid \exists a\in A', (a,b)\in f \}$.
It is easy to check the following (though not as immediate as in the
case of the total functions): 

\begin{lemma}\label{pfun-lem}
Given partial functions $f \co A \pto B$ and $g \co B \pto C$ and 
$A' \subseteq A$ we have that $(f;g)(A') = g(f(A'))$. 
\end{lemma}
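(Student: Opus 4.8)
The plan is to establish the set equality $(f;g)(A') = g(f(A'))$ by a direct chain of logical equivalences obtained from unfolding the relational definitions of composition and of the image operation. First I would recall that, viewed as binary relations, the composite is $f;g = \{(a,c)\mid \exists b\in B,\ (a,b)\in f \text{ and } (b,c)\in g\}$, and that by the definition of the image, $c\in (f;g)(A')$ holds precisely when there is some $a\in A'$ with $(a,c)\in f;g$. Substituting the former into the latter yields the condition $\exists a\in A'\ \exists b\in B,\ (a,b)\in f \text{ and } (b,c)\in g$.

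Next I would unfold the right-hand side analogously. By definition $b\in f(A')$ iff $\exists a\in A',\ (a,b)\in f$, and $c\in g(f(A'))$ iff $\exists b\in f(A'),\ (b,c)\in g$. Combining these gives the condition $\exists b\in B,\ (\exists a\in A',\ (a,b)\in f)\ \text{and}\ (b,c)\in g$. Since the quantifier $\exists a\in A'$ can be pulled outward past the conjunct $(b,c)\in g$, which does not mention $a$, this is logically the same statement as the one obtained for the left-hand side; both assert the existence of a single pair $(a,b)$ with $a\in A'$, $(a,b)\in f$ and $(b,c)\in g$. A formal write-up would just present the two inclusions separately, the witness $b$ furnished in one direction being reused verbatim in the other.

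The point to be careful about---and what the parenthetical remark about total functions alludes to---is to argue at the level of the relational graphs rather than pointwise with the partial functions as maps. If one were tempted to write $(f;g)(a) = g(f(a))$ for each $a\in A'$ and then take unions, one would have to track separately the definedness of $f$ at $a$ and of $g$ at $f(a)$, i.e.\ whether $a\in\DOM(f)$ and $f(a)\in\DOM(g)$; for total functions these conditions are vacuous and the identity is immediate, but here they are not. The graph-level argument sidesteps this entirely: the single existential witness $b$ simultaneously certifies $b\in f(A')$ and serves as the antecedent of $g$, so partiality is handled automatically and, incidentally, the single-valuedness of $f$ and $g$ is never used---the identity in fact holds for arbitrary relations.
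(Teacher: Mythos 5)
Your proof is correct: the paper states this lemma without proof (``It is easy to check\dots''), and your unfolding of the relational definitions of composition and image, with the observation that the existential over $a\in A'$ commutes past the conjunct $(b,c)\in g$, is exactly the routine verification the paper intends. Your closing remark---that the graph-level argument is what makes the partiality harmless, and that single-valuedness is never used so the identity holds for arbitrary relations---also correctly identifies what the paper's parenthetical about total functions is alluding to.
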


\subsection{$\3/2$-categories}

A \emph{$\3/2$-category} is just a category such that its
hom-sets are partial orders, and the composition preserve these
partial orders.  
In the literature $\3/2$-categories are also called \emph{ordered
  categories} or \emph{locally ordered categories}. 
In terms of enriched category theory \cite{kelly-cat}, $\3/2$-category
are just categories enriched by the monoidal category of partially
ordered sets.
 
Given a $\3/2$-category $\C$ by $\C^\varobar$ we denote its `vertical'
dual which reverses the partial orders, and by $\C^\varoplus$ its
double dual $\C^{\varominus\varobar}$. 
Given $\3/2$-categories $\C$ and $\C'$, a \emph{strict $\3/2$-functor} 
$F \co \C \ra \C'$ is a functor $\C \ra \C'$ that preserves the
partial orders on the hom-sets.
\emph{Lax functors} relax the functoriality conditions  
$F(h);F(h') = F(h;h')$ to  $F(h);F(h') \leq F(h;h')$ 
(when $\cod{h} = \dom{h'}$) and $F(1_A) = 1_{F(A)}$  to $1_{F(A)} \leq
F(1_A)$.  
If these inequalities are reversed then $F$ is an 
\emph{oplax functor}. 
This terminology complies to \cite{borceux94} and to more recent
literature, but in earlier literature \cite{kelly-street74,jay91} this
is reversed.   
Note that oplax + lax = strict. 
In what follows whenever we say ``$\3/2$-functor'' without the
qualification ``lax'' or ``oplax'' we mean a functor which is either
lax or oplax.  

Lax functors can be composed like ordinary functors; we denote by
$\3/2 \CAT$ the category of $\3/2$-categories and lax functors. 
  
Most typical examples of a $\3/2$-category are $\Setp$ -- the category
of partial functions in which the ordering between partial functions
$A \pto B$ is given by the inclusion relation on the binary relations
$A \ra B$, and $\PoSet$ -- the category partial ordered sets (with
monotonic mappings as arrows) with orderings between monotonic
functions beign defined point-wise ($f \leq g$ if and only if $f(p)
\leq g(p)$ for all $p$).   

\begin{definition}
Let us consider the power-set monad on categories of
Dfn.~\ref{power-cat-dfn}. Given the partial order on each $\P \C$
given by category inclusions, the Kleisli category $\CAT_{\!\!\P}$ admits a
two-fold refinement to a $\3/2$-category:
\begin{enumerate}

\item morphisms $\C \ra \P \C'$ are allowed to be lax functors rather
  than (strict) functors, and  

\item we consider the point-wise partial order on the class of the lax
  functors  $\C \ra \P \C'$ that is induced by the partial order on
  $\P \C'$.  

\end{enumerate}
Let us denote the $\3/2$-category thus obtained by $\3/2 (\CAT_{\!\!\P})$. 
\end{definition}

Unlike in the case of ordinary categories, colimits in
$\3/2$-categories come in several different flavours according to the
role played by the order on the arrows. 
Here we recall some of these for the particular emblematic case of
pushouts; the extension to other types of colimits being obvious.  

Given a span $\varphi_1,\varphi_2$ of arrows in a $\3/2$-category, a
\emph{lax cocone} for the span consists of arrows
$\theta_0,\theta_1,\theta_2$ such that there are inequalities as shown in
the following diagram:
\begin{equation}\label{lax-cocone-equation}
\xymatrix @C-1.5em{
 & & \bullet & & \\
\bullet \ar@{.>}[urr]^{\theta_1} & { \ \ \ \ \leq} & &
{ \geq \ \ \ \ } &
\bullet \ar@{.>}[ull]_{\theta_2}\\
 &  & \bullet \ar[ull]^{\varphi_1} \ar[urr]_{\varphi_2} \ar@{.>}[uu]_{\theta_0}& &
}
\end{equation}
When the two inequalities are both equalities, this is a \emph{strict}
cocone. 
In this case $\theta_0$ is redundant and the data collapses to the
equality $\varphi_1 ; \theta_1 = \varphi_2 ; \theta_2$. 

A lax cocone like in diagram \eqref{lax-cocone-equation} is:
\begin{itemize}

\item \emph{pushout} when it is strict and for any strict cocone
  $\theta'_1, \theta'_2$ there exists and unique arrow $\mu$ that is
  mediating, i.e. $\theta_k ; \mu = \theta'_k$, $k = 1,2$; 

\item \emph{lax pushout} when for any lax cocone   
$\theta'_0, \theta'_1, \theta'_2$ there exists an unique mediating
arrow $\mu$, i.e. $\theta_k ; \mu = \theta'_k$, $k = 0,1,2$;

\item \emph{weak (lax) pushout} when the uniqueness condition on the
  mediating arrow is dropped from the above properties; 

\item \emph{near pushout} when for any lax cocone $\theta'_0,
  \theta'_1, \theta'_2$ the set of mediating arrows $\{ \mu \mid
  \theta_k ; \mu \leq \theta'_k, k = 0,1,2 \}$ has a maximal element. 

\end{itemize}
Pushouts are not a proper $\3/2$-categorical concept because they do
not involve in any way the orders on the arrows. \vsp

Lax pushouts represents the instance of a natural concept of colimit
from general enriched category theory \cite{kelly-cat} to
$\3/2$-categories; however in concrete situations, unlike their
cousins from ordinary category theory, they can be very
difficult to grasp and sometimes appearing quite inadequate.
For example in $\Setp$, if 
$\DOM \varphi_1 \cap \DOM \varphi_2 \not= \emptyset$ then the span
$(\varphi_1,\varphi_2)$ does not have a lax pushout. 
This is caused by the discrepancy between a lot of laxity at the
level of diagrams and of the arrows on the one hand (allowing for
unbalanced cocones in which low components may coexist with high
components), and the strictness required in the universal property on
the other hand.  
A remedy for this would be to restrict the cocones to designated
subclasses of arrows as follows.

\begin{definition}[$\T$-colimits]
Given a (1-)subcategory $\T \subseteq \C$ of a $\3/2$-category $\C$, a
\emph{lax $\T$-cocone} for a span $(\varphi_1,\varphi_2)$ is a lax
cocone $(\theta_0,\theta_1,\theta_2)$ for the span such that
$\theta_k\in \T$, $k=0,1,2$.
A \emph{lax $\T$-pushout} is a minimal lax $\T$-cocone, i.e. for any
lax $\T$-cocone  $(\theta'_0,\theta'_1,\theta'_2)$ there exists an unique
mediating arrow $\mu\in \T$ such that $\theta_k ; \mu = \theta'_k$,
$k=0,1,2$. 

This definition extends in the obvious way to general colimits and to
the weak case (by dropping off the requirement on the uniqueness of
$\mu$).  
\end{definition}
For example, in $\Setp$ by letting $\T$ \ be the class of total
functions, \emph{any} span of partial functions admits a lax
$\T$-pushout.  \vsp

Near pushouts (terminology from \cite{jay91}) are much easier to grasp
than lax pushouts (for example in $\Setp$ they are the epimorphic
cocones) but nevertheless they have received only little consideration
due to  their pathology of lacking uniqueness, a property that is
considered crucial for any kind of colimits.   
However in \cite{jay91} it is argued that they constitute a more
proper concept of colimit in a ordered categorical context because it
involves only inequalities and moreover Goguen argues
\cite{Goguen:Algebraic-Semiotics-1999} that their lack
of the uniqueness property is exactly what makes them useful for
modelling conceptual blending; there he calls them  
\emph{$\3/2$-pushouts}.  

\section{$\3/2$-institutions}\label{3/2-institution-sec}

The outline of this section is as follows. 
\begin{enumerate}

\item We recall the concept of \emph{institution} and provide a couple
  of emblematic examples. Some basic institution theoretic concepts
  are alo recalled.  

\item We introduce the definition of \emph{$\3/2$-institutions}.

\item We provide some relevant \emph{examples} of $\3/2$-institutions
  that constitute extensions of well known corresponding institutions
  that accomodate partiality of the signature morphisms. 

\item We introduce the concept of \emph{$\3/2$-institutional seed}
  that serves as a very general way to define $\3/2$-institutions. 
  This is also mathematically convenient especially within the context
  of the study of model amalgamation properties. 

\item We extend the crucial concept of \emph{model amalgamation} from
  common institution theory to $\3/2$-institution theory, and we give
  some general and yet pragmatic sufficient conditions for 
  $\3/2$-institution theoretic model amalgamation. 

\item We extend the concept of \emph{theory morphism} from common
  institution theory to $\3/2$-institutions, what happens being an
  unfolding of the original concept to several concepts of theory
  morphisms.  
  We establish the relationships between these, and we study their
  basic compositionality and model amalgamation properties.  

\item Finally, we introduce and study \emph{theory changes}, which
  represent a different kind of mapping or relationship between
  theories that is relevant especially in foundational studies for the
  problem of merging software changes.  

\end{enumerate}

\subsection{Institutions}

An  \emph{institution} $\I = 
(\Sign^{\I}, \Sen^{\I}, \Mod^{\I}, \models^{\I})$ consists of 
\begin{itemize}

\item a category $\Sign^{\I}$ whose objects are called
  \emph{signatures},

\item a sentence functor $\Sen^{\I} \co \Sign^{\I} \ra \Set$
  defining for each signature a set whose elements are called
  \emph{sentences} over that signature and defining for each signature
  morphism a \emph{sentence translation} function, 

\item a model functor $\Mod^{\I} \co (\Sign^{\I})^{\varominus} \ra \CAT$
  defining for each signature $\Sigma$ the category
  $\Mod^{\I}(\Sigma)$ of \emph{$\Sigma$-models} and $\Sigma$-model
  homomorphisms, and for each signature morphism $\varphi$ the
  \emph{reduct} functor $\Mod^{\I}(\varphi)$,  

\item for every signature $\Sigma$, a binary 
  \emph{$\Sigma$-satisfaction relation}
  $\models^{\I}_{\Sigma} \subseteq |\Mod^{\I} (\Sigma)|
  \times \Sen^{\I} (\Sigma)$, 

\end{itemize}
such that for each morphism 
$\varphi$,  the \emph{Satisfaction Condition}
\begin{equation}
M'\models^{\I}_{\Sigma'} \Sen^{\I}(\varphi)\rho \text{ if and only if  }
\Mod^{\I}(\varphi)M' \models^{\I}_\Sigma \rho
\end{equation}
holds for each $M'\in |\Mod^{\I} (\cod{\varphi})|$ and $\rho \in
\Sen^{\I} (\dom{\varphi})$. 
\[
\xymatrix{
    \dom{\varphi} \ar[d]_{\varphi} & |\Mod^{\I}(\dom{\varphi})|
    \ar@{-}[r]^-{\models^{\I}_{\dom{\varphi}}} & 
    \Sen^{\I}(\dom{\varphi}) \ar[d]^{\Sen^{\I}(\varphi)} \\
    \cod{\varphi} & | \Mod^{\I}(\cod{\varphi})| \ar[u]^{\Mod^{\I}(\varphi)} 
    \ar@{-}[r]_-{\models^{\I}_{\cod{\varphi}}} & \Sen^{\I}(\cod{\varphi})
  }
\] 
We may omit the superscripts or subscripts from the notations of the
components of institutions when there is no risk of ambiguity. 
For example, if the considered institution and signature are clear,
we may denote $\models^{\I}_\Sigma$ just by $\models$. 
For $M = \Mod(\varphi)M'$, we say that $M$ is the
\emph{$\varphi$-reduct} of $M'$.

\begin{example}[Propositional logic -- $\PL$]
\begin{rm}
This is defined as follows.
$\Sign^{\PL} = \Set$, and for any set $P$, $\Sen(P)$ is generated by
the grammar 
\[
S ::= P \mid S \wedge S \mid \neg S 
\]
and $\Mod^{\PL}(P) = (2^P,\subseteq)$.
For any $M \in |\Mod^{\PL} (P)|$, depending on convenience, we may
consider it either as a subset $M\subseteq P$ or equivalently as a
function $M \co P \ra 2 = \{ 0, 1 \}$.

For any function $\varphi \co P \ra P'$, $\Sen^{\PL}(\varphi)$
replaces the each element $p\in P$ that occurs in a sentence $\rho$ by 
$\varphi(p)$, and $\Mod^{\PL}(\varphi)(M') = \varphi;M$ for each
$M'\in 2^{P'}$. 
For any $P$-model $M \subseteq P$ and $\rho\in \Sen^{\PL}(P)$,
$M\models\rho$ is defined by induction on the structure of $\rho$ by
$(M \models p) = (p\in M)$, 
$(M \models \rho_1 \wedge \rho_2) = 
(M \models \rho_1) \wedge (M \models \rho_2)$ and 
$(M \models \neg\rho) = \neg(M \models \rho)$.  
\end{rm}
\end{example}

\begin{example}[Many-sorted algebra -- $\MSA$]
\begin{rm}
  The \emph{$\MSA$-signatures} are pairs $(S,F)$ consisting of a set $S$ of sort
  symbols and of a family $F = \{ F_{w\ra s} \mid w\in S^*, s\in S \}$ of
  sets of function symbols indexed by arities (for the arguments) and sorts (for
  the results).\footnote{By $S^*$ we denote the set of strings of sort symbols.}
  \emph{Signature morphisms} $\varphi \co (S,F) \ra (S',F')$ consist of a
  function $\varphi^\st \co S \ra S'$ and a family of functions 
  $\varphi^\op = 
  \{ \varphi^\op_{w\ra s} \co F_{w\ra s} \ra F'_{\varphi^\st (w) \ra
    \varphi^\st (s)} \mid w\in S^*, s\in S \}$.

  The \emph{$(S,F)$-models} $M$, called algebras, interpret each sort symbol $s$
  as a set $M_s$ and each function symbol $\sigma\in F_{w\ra s}$ as a function
  $M_\sigma$ from the product $M_w$ of the interpretations of the argument sorts
  to the interpretation $M_s$ of the result sort.
  An \emph{$(S,F)$-model homomorphism} $h \co M \ra M'$ is an indexed family of
  functions $\{ h_s \co M_s \ra M'_s \mid s\in S \}$ such that 
  $h_s (M_\sigma (m)) = M'_\sigma (h_w (m))$ for each $\sigma \in F_{w \ra s}$
  and each $m\in M_w$, where $h_w \co M_w \ra M'_w$ is the canonical
  componentwise extension of $h$, i.e.\ 
  $h_w (m_1, \dots, m_n) = (h_{s_1}(m_1), \dots, h_{s_n}(m_n))$ for 
  $w = s_1 \dots s_n$ and $m_i \in M_{s_i}$.

  For each signature morphism $\varphi \co (S, F) \ra (S', F')$, the
  \emph{reduct} $\Mod(\varphi)(M')$ of an $(S', F')$-model $M'$ is defined by
  $\Mod(\varphi)(M')_x = M'_{\varphi(x)}$ for each sort or function symbol $x$
  from the domain signature of $\varphi$.

  For each signature $(S, F)$, $T_{(S,F)} = ((T_{(S,F)})_s )_{s \in S}$
  is the least family of sets such that $\sigma(t) \in (T_{(S,F)})_s$ for all
  $\sigma \in F_{w \ra s}$ and all tuples $t \in (T_{(S,F)})_{w}$.  The elements
  of $(T_{(S,F)})_s$ are called \emph{$(S,F)$-terms of sort $s$}.  For each
  $(S,F)$-algebra $M$, the \emph{evaluation of an $(S,F)$-term $\sigma(t)$ in
    $M$}, denoted $M_{\sigma(t)}$, is defined as $M_\sigma (M_t)$, where $M_t$
  is the componentwise evaluation of the tuple of $(S,F)$-terms $t$ in $M$.

  \emph{Sentences} are the usual first order sentences built from equational
  atoms $t=t'$, with $t$ and $t'$ (well-formed) terms of the same sort, by
  iterative application of Boolean connectives ($\wedge$, $\impl$, $\neg$,
  $\vee$) and quantifiers ($\forall X$, $\exists X$ -- where $X$ is a sorted
  set of variables).
  Sentence translations along signature morphisms just rename the sort and
  function symbols according to the respective signature morphisms.
  They can be formally defined by recursion on the structure of the sentences.
  The satisfaction of sentences by models is the usual Tarskian satisfaction
  defined recursively on the structure of the sentences.  (As a special note for
  the satisfaction of the quantified sentences, defined in this formalisation by
  means of model reducts, we recall that
  $M \models_\Sigma (\forall X)\rho$ if and only if $M' \models_{\Sigma+X} \rho$
  for each expansion $M'$ of $M$ to the signature $\Sigma+X$ that adds the
  variables $X$ as new constants to $\Sigma$.)
\end{rm}
\end{example}

In the following we recall some basic concepts from institution theory
that will play a role in this work. 

For any set $E$ of $\Sigma$-sentences:
\begin{itemize}

\item if $M$ is a any $\Sigma$-model, then by $M \models E$ we
  denote that $M \models e$ for each $e\in E$;

\item $E$ is \emph{consistent} when there
  exists a $\Sigma$-model $M$ such that $M \models E$;

\item if $\rho$ is a $\Sigma$-sentence then $E \models \rho$
  denotes the situation when for each $\Sigma$-model $M$ if $M \models
  E$ then $M \models \rho$ too;

\item by $E^\bullet$ we denote $\{ \rho\in \Sen(\Sigma) \mid E \models
  \rho \}$. 

\end{itemize}

In any institution, a \emph{theory} is a pair $(\Sigma,E)$ consisting
of a signature $\Sigma$ and a set $E$ of $\Sigma$-sentences.  
A \emph{theory morphism} $\varphi \co (\Sigma,E) \ra (\Sigma',E')$ 
is a signature morphism $\varphi \co \Sigma \ra \Sigma'$
such that $E' \models \Sen(\varphi)E$.
It is easy to check that the theory morphisms are closed under the
composition given by the composition of the signature morphisms; this
gives the \emph{category of the theories of $\I$} denoted $\Th^\I$.  
This fact opens the door for the following general construction, that
is quite helpful in several situations, especially in the study of
logic encodings. 

Let $\I = (\Sign,\Sen,\Mod,\models)$ be any institution. 
The \emph{institution of the theories} of $\I$, denoted by 
$\I^{\thh} = (\Sign^{\thh},\Sen^{\thh},\Mod^{\thh},\models^{\thh})$, is
defined by 
\begin{itemize} 

  \item $\Sign^\thh$ is the category $\Th$ of the theories of $\I$, 

  \item $\Sen^\thh(\Sigma,E) = \Sen(\Sigma)$, 

  \item $\Mod^\thh(\Sigma,E)$ is the full subcategory of
    $\Mod(\Sigma)$ determined by those
           models which satisfy $E$, and

  \item for each $(\Sigma,E)$-model $M$ and $\Sigma$-sentence $e$,
    $M \models^\thh_{(\Sigma,E)} e$ if and only if $M \models_\Sigma e$. 
 
\end{itemize}

Model amalgamation properties for institutions formalize the possibility of
amalgamating models of different signatures when they are
consistent on some kind of generalized `intersection' of signatures.
It is one of the most pervasive properties of concrete institutions
and it is used in  a crucial way in many institution theoretic
studies.
A few early examples are 
\cite{sannella-tarlecki88,tarlecki86,jm-granada89,modalg}. 
For the role played by this property in specification theory and in
institutional model theory see \cite{sannella-tarlecki-book} and
\cite{iimt}, respectively.

A \emph{model of a diagram of signature morphisms in an institution}
consists of a model $M_k$ for each signature $\Sigma_k$ in the
diagram such that for each signature morphism $\varphi \co \Sigma_i
\to \Sigma_j$ in the diagram we have that $M_i = \Mod(\varphi)M_j$.    

A commutative square of signature morphisms
$$\xy
\xymatrix { 
\Sigma \ar[r]^{\varphi_1}   \ar[d]_{\varphi_2}
    & \Sigma_1       \ar[d]^{\theta_1} \\
\Sigma_2  \ar[r]_{\theta_2}
    & \Sigma'
}
\endxy$$
is an \emph{amalgamation square} if and only if each model of the span
$(\varphi_1,\varphi_2)$ admits an unique completion to a model of the
square. 
When we drop off the uniqueness requirement we call this a 
\emph{weak model amalgamation square}.

In most of the institutions formalizing conventional or non-conventional 
logics, pushout squares of signature morphisms are model amalgamation
squares  \cite{iimt}.  

In the literature there are several more general concepts of model
amalgamation. 
One of them is model amalgamation for cocones of arbitrary diagrams
(rather than just for spans), another one is model amalgamation for
model homomorphisms. 
Both are very easy to define by mimicking the definitions presented
above. 
While the former generalisation is quite relevant for the intended
applications of our work, the latter is less so since at this moment
model homomorphisms do not seem to play any role in conceptual
blending or in merging of software changes. 
Moreover amalgamation of model homomorphisms is known to play a role
only in some developments in institution-independent model theory 
\cite{iimt}, but even there most involvements of model amalgamation
refers only to amalgamation of models.  

\subsection{$\3/2$-institutions: definition}

\begin{definition}[$\3/2$-institution]\label{ins-dfn}
A \emph{$\3/2$-institution} 
$\I = ( \Sign^{\I}, \Sen^{\I}, \Mod^{\I}, (\models^{\I}_\Sigma)_{\Sigma \in
  |\Sign^{\I}|} )$ consists of 
\begin{itemize}

\item a $\3/2$-category of signatures $\Sign^{\I}$, 

\item an $\3/2$-functor $\Sen^{\I} \co \Sign^{\I} \ra
  \Setp$, called the \emph{sentence functor}, 

\item an lax $\3/2$-functor  
$\Mod^{\I}\co(\Sign^{\I})^{\varoplus}\rightarrow \3/2 (\CAT_{\!\!\P})$, called the
\emph{model functor}, 

\item for each signature $\Sigma\in |\Sign^\I|$ a 
\emph{satisfaction relation} 
$\models_\Sigma^{\I} \ \subseteq \ |\Mod^{\I}(\Sigma)|\times \Sen^{\I}(\Sigma)$ 

\end{itemize}
such that for each morphism 
$\varphi \in \Sign^{\I}$, 
the \emph{Satisfaction Condition}
\begin{equation}\label{sat-cond-eq}
M'\models^{\I}_{\cod{\varphi}} \Sen^{\I}(\varphi)\rho
\ \ \text{ if and only if } \ \
M \models^{\I}_{\dom{\varphi}} \rho 
\end{equation}
holds for each $M'\in |\Mod^{\I} (\cod{\varphi})|$, 
$M \in |\Mod^{\I}(\varphi)M'|$ and 
$\rho \in \DOM (\Sen^{\I} (\varphi))$.
\end{definition}

The difference between $\3/2$-institutions and ordinary institutions,
from now on called \emph{1-institutions}, is determined by the
$\3/2$-categorical structure of the signature morphisms which
propagates to the sentence and to the model functors.   
Consequently the Satisfaction Condition \eqref{sat-cond-eq} takes an
appropriate format. 
Thus, for each signature morphism $\varphi$
its corresponding sentence translation $\Sen(\varphi)$ is a partial
function $\Sen(\dom{\varphi}) \pto \Sen(\cod{\varphi})$ and moreover
whenever $\varphi \leq \theta$ we have that 
$\Sen(\varphi) \subseteq \Sen(\theta)$. 
The sentence functor $\Sen$ can be either lax or oplax; depending on
how is this we may call the respective $\3/2$-institution as
\emph{lax} or \emph{oplax $\3/2$-institution}. 
In many concrete situations it happens that $\Sen$ is strict while
some general results require it to be either lax or oplax or both. 

The model reduct $\Mod(\varphi)$ is an \emph{lax} functor
$\Mod(\cod{\varphi}) \ra \P \Mod(\dom{\varphi})$ meaning that for each
$\Sigma'$-model we have a \emph{set of reducts} rather than a single 
reduct. 
In concrete examples this is a direct consequence of the partiality
of $\varphi$: in the reducts the interpretation of the symbols on
which $\varphi$ is not defined is unconstrained, therefore there may
be many possibilities for their interpretations. 
``Many'' here includes also the case when there is no interpretation.

\begin{definition}
The model functor $\Mod$ \emph{admits emptiness} when there exists a
signature morphism $\varphi$ and a $\cod{\varphi}$-model $M'$ such
that $\Mod(\varphi) = \emptyset$, otherwise it is said that $\Mod$
does not admit emptiness. 
\end{definition}

In examples most often the model functors $\Mod$ do not admit
emptiness, however the general definition does not rule out emptiness
and moreover there are significant examples (we will see in
Sect.~\ref{th-morphism-sec}) when emptiness of $\Mod$ may happen. 

\begin{itemize}

\item[--] 
The fact that $\Mod$ is a $\3/2$-functor implies also that whenever
$\varphi \leq \theta$ we have $\Mod(\theta) \leq \Mod(\varphi)$,
i.e. $\Mod(\theta)M' \subseteq \Mod(\varphi)M'$, etc. 

\item[--]
The lax aspect of $\Mod$ means that for signature morphisms
$\varphi$ and $\varphi'$ such that $\cod{\varphi} = \dom{\varphi'}$ and
for any  $\cod{\varphi'}$-model $M''$, we have that 
\[
\Mod(\varphi)(\Mod(\varphi')M'') \subseteq
\Mod(\varphi;\varphi')M''  
\]
and for each signature $\Sigma$ and for each $\Sigma$-model $M$ that 
\[
M \in \Mod(1_\Sigma)M. 
\]

\item[--]
The lax aspect of the reduct functors $\Mod(\varphi)$ means that for
model homomorphisms $h_1, h_2$ such that $\cod{h_1}=\dom{h_2}$ we have
that 
\[
\Mod(\varphi)(h_1);\Mod(\varphi)(h_2) \subseteq
\Mod(\varphi)(h_1;h_2)
\] 
and for each $M'\in \Mod(\cod{\varphi})$ and each $M\in
\Mod(\varphi)M'$ that 
\[
1_M \in \Mod(\varphi)1_{M'}.
\]
\end{itemize}
As already mentioned above model homomorphisms do not play yet any
role in conceptual blending or in other envisaged applications of
$\3/2$-institutions.
Hence the lax aspect of model functors is for the moment a purely
theoretical feature which is however supported naturally by all
examples. \vsp

In \cite{vidal-tur2010} there is a 2-categorical generalization of the
concept of institution, called \emph{2-institution}, that consider
$\Sign$ to be a 2-category, $\Sen\co \Sign \to \CAT$ and $\Mod\co
\Sign^\varominus \to \CAT$ to be pseudo-functors,
and that takes a (quite sophisticated categorically) many-valued
approach to the satisfaction relation.   
From these we can see immediately that $2$-institutions of
\cite{vidal-tur2010} do not cover the concept of $\3/2$-institution
through the perspective of $\3/2$-categories as special cases of
$2$-categories, the functors $\Sen$ and $\Mod$ in $2$-institutions
diverging from those in $\3/2$-institutions in two ways: they are
pseudo-functors (in $\3/2$-category theory this means just ordinary
functors) and their targets do not match those of
$\3/2$-institutions. 
This lack of convergence is due to the two extensions aiming to
different application domains. 

\begin{definition}[Total signature morphisms]\label{total-dfn}
A signature morphism $\varphi$ in a $\3/2$-institution is 
\begin{itemize}

\item \emph{$\Sen$-maximal} when $\Sen(\varphi)$ is total;

\item \emph{$\Mod$-maximal} when for each $\cod{\varphi}$-model $M'$,
  $\Mod(\varphi)M'$ is a singleton; and 

\item \emph{total} when it is both $\Sen$-maximal and $\Mod$-maximal. 

\end{itemize}
\end{definition}

\begin{corollary}\label{total-cor}
In each $\3/2$-institution the total signature morphisms determine a
1-institution. 
\end{corollary}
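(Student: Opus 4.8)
The plan is to produce the $1$-institution by direct restriction. I would take as its category of signatures the wide subcategory $\Sign_0$ of $\Sign^{\I}$ having the same objects but only the total signature morphisms as arrows, and then restrict $\Sen^{\I}$, $\Mod^{\I}$ and the satisfaction relations to it. Four things then have to be checked: that $\Sign_0$ is a genuine category (closure under composition, identities); that the restriction of $\Sen^{\I}$ corestricts to $\Set$ and is a \emph{strict} functor; that the restriction of $\Mod^{\I}$ corestricts to $\CAT$ and is a \emph{strict} functor; and that the Satisfaction Condition \eqref{sat-cond-eq} degenerates to its familiar $1$-institutional form.

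First I would handle the sentence part, where everything is clean. When $\varphi,\varphi'$ are $\Sen$-maximal the composite $\Sen(\varphi);\Sen(\varphi')$ is a total function, and in the lax case the functoriality inequality $\Sen(\varphi);\Sen(\varphi') \subseteq \Sen(\varphi;\varphi')$ forces equality, since a single-valued partial function cannot properly contain a total function with the same source; hence $\Sen(\varphi;\varphi')$ is total and strict functoriality holds on $\Sign_0$ at the same time. The identity is treated the same way from $1_{\Sen(\Sigma)} \subseteq \Sen(1_\Sigma)$. For the model part, $\Mod$-maximality means each $\Mod(\varphi)M'$ is a singleton, so on total morphisms $\Mod(\varphi)$ is, up to the canonical singleton bijection, an ordinary functor $\Mod(\cod{\varphi}) \ra \Mod(\dom{\varphi})$; thus the restricted model functor factors through the Kleisli inclusion $\CAT \hra \CAT_{\!\!\P}$ and corestricts to $\CAT$.

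For functoriality of the model part I would again try to collapse the laxity: the inequality $\Mod(\varphi)(\Mod(\varphi')M'') \subseteq \Mod(\varphi;\varphi')M''$ has a singleton on its left-hand side, so as soon as the right-hand side is also known to be a singleton the inclusion is an equality and strict functoriality follows, with the identities collapsing similarly from $M \in \Mod(1_\Sigma)M$. Once closure and functoriality are in place the Satisfaction Condition transfers verbatim: for a total $\varphi$ the translation $\Sen(\varphi)$ is total, so $\Sen(\varphi)\rho$ is defined for every $\rho\in\Sen(\dom{\varphi})$, and $\Mod(\varphi)M'$ is the single model $M$, whence \eqref{sat-cond-eq} reads $M'\models_{\cod{\varphi}}\Sen(\varphi)\rho$ iff $M\models_{\dom{\varphi}}\rho$, exactly the $1$-institutional form.

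The main obstacle is precisely the preservation of $\Mod$-maximality under composition — equivalently, the reverse inclusion $\Mod(\varphi;\varphi')M'' \subseteq \Mod(\varphi)(\Mod(\varphi')M'')$ for total $\varphi,\varphi'$ — since the laxity of $\Mod$ supplies only one direction and, on its own, places no upper bound on $\Mod(\varphi;\varphi')M''$ (nor even forces $\Mod(1_\Sigma)M$ to be a singleton). My intended handle is to exploit that a total morphism behaves as a maximal arrow in its hom-poset, and that the order-reversing $\Mod$ therefore sends it to a \emph{minimal} (singleton) reduct, so that the lax lower bound $\Mod(\varphi);\Mod(\varphi')$ — itself singleton-valued, as singleton-valued Kleisli morphisms compose to singleton-valued ones — is forced to be attained and no spurious reducts can survive in the composite; I would make this precise and expect it to be where the actual work lies. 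The analogous point for $\Sen$ is automatic in the lax case but must be argued separately when $\Sen$ is oplax, and I would dispatch that in the same spirit.
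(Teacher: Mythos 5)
Your restriction strategy is certainly the intended reading of the corollary (the paper in fact offers no proof at all, presenting the statement as immediate from Dfn.~\ref{total-dfn}), and several pieces of your argument are correct: in the lax case the inequalities $1_{\Sen(\Sigma)} \subseteq \Sen(1_\Sigma)$ and $\Sen(\varphi);\Sen(\varphi') \subseteq \Sen(\varphi;\varphi')$ do force equality, because a total function contained in a single-valued partial function with the same source must coincide with it; and your conditional observation that the laxity of $\Mod$ collapses to strict functoriality as soon as $\Mod(\varphi;\varphi')M''$ is known to be a singleton is also right. Most importantly, you have correctly isolated the genuine issue, which the paper passes over in silence: nothing in Dfn.~\ref{ins-dfn} forces the total morphisms to be closed under composition, nor forces identities to be total.

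However, your proposed handle for that obstacle fails, and no handle can exist at this level of generality. Totality in the sense of Dfn.~\ref{total-dfn} has nothing to do with order-maximality of an arrow in the hom-posets of $\Sign$; and even for an order-maximal composite, the order-reversal of $\Mod$ yields only $\Mod(\varphi;\varphi')M'' \subseteq \Mod(\chi)M''$ for $\chi \leq \varphi;\varphi'$, i.e.\ lower bounds by \emph{larger} sets --- it never produces an upper bound on $\Mod(\varphi;\varphi')M''$, which is what a singleton claim requires. Indeed, closure is simply not a theorem of the axioms: take $\Sign$ to be the three-object category $A \xrightarrow{\ \varphi\ } B \xrightarrow{\ \varphi'\ } C$ with discrete hom-orders; $\Sen(\cdot) = \emptyset$ everywhere (so every morphism is $\Sen$-maximal and all satisfaction conditions are vacuous); $\Mod(A)$ the discrete category on $\{M,N\}$, $\Mod(B) = \{*_B\}$, $\Mod(C) = \{*_C\}$; $\Mod(\varphi)*_B = \{M\}$, $\Mod(\varphi')*_C = \{*_B\}$, $\Mod(\varphi;\varphi')*_C = \{M,N\}$, identities strict. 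Every clause of Dfn.~\ref{ins-dfn} holds --- the only nontrivial lax inequality is $\{M\} \subseteq \{M,N\}$ --- and $\varphi$, $\varphi'$ are total, yet $\varphi;\varphi'$ is not $\Mod$-maximal, so the total morphisms do not even form a subcategory. The oplax $\Sen$ case dies the same way, since $\Sen(\varphi;\varphi') \subseteq \Sen(\varphi);\Sen(\varphi')$ places no lower bound on the left-hand side, so it cannot be dispatched ``in the same spirit.'' The honest conclusion is that the corollary holds only under the tacit additional assumption that identities are total and totality is closed under composition --- an assumption that is trivially verified in the paper's partiality-based examples such as $\3/2 \PL$ and $\3/2 \MSA$, where the total morphisms are exactly the everywhere-defined ones --- and under that assumption the rest of your argument (including strictness of $\Sen$ in the oplax case, by the same total-inside-partial principle) does go through.
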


\subsection{$\3/2$-institutions: examples}

The following expected example shows that the concept of
$\3/2$-institution constitute a generalisation of the concept of
institution. 

\begin{example}[Institutions]
\begin{rm}
Each 1-institution can be regarded as a $\3/2$-institution that has
all its signature morphisms total (cf. Dfn.~\ref{total-dfn} and
Cor.~\ref{total-cor}).  
\end{rm}
\end{example}

\begin{example}[Propositional logic with partial morphisms of
  signatures -- $\3/2 \PL$]\label{3/2-pl-ex}
\begin{rm}
This example extends the ordinary institution $\PL$ to a
$\3/2$-institution by considering partial functions rather than total
functions as signature morphisms; thus $\Sign = \Setp$. 
\vsp

SENTENCES.
While for each set $P$, $\Sen(P)$ is like in $\PL$, for any partial
function $\varphi \co P \pto P'$ the sentence translation
$\Sen(\varphi)$ translates like in $\PL$ but only the sentences
containing only propositional variables $P$ that are translated
by $\varphi$, i.e. that belong to $\DOM \varphi$; hence the
partiality of $\Sen(\varphi)$.  
More precisely we have that  
$\DOM (\Sen\varphi) = \Sen^{\PL} (\DOM \ \varphi)$ and for each 
$\rho\in \DOM (\Sen\varphi)$ we have that $\Sen(\varphi)\rho =
\Sen^{\PL} (\varphi^0)\rho$ .  
The sentence functor is a \emph{strict} $\3/2$-functor; the main main
part for the functoriality argument for $\Sen$ goes as follows. 
Let $\varphi,\varphi'$ be signature morphisms where $\cod{\varphi} =
\dom{\varphi'}$ and let $\rho\in \Sen(\dom{\varphi})$).
\begin{itemize}

\item 
First we establish the equality of the definition domains:  
$$\begin{aligned}
\DOM \ \Sen(\varphi;\varphi') = \ \ & \Sen^{\PL}(\DOM \ \varphi;\varphi') \\
  = \ \ & \Sen^{\PL}(\{ p \in \DOM \ \varphi \mid \varphi^0 (p) \in \DOM \
      \varphi' \} \\
  = \ \ & \{ \rho\in \Sen^{\PL}(\DOM \ \varphi) \mid
      \Sen^{\PL}(\varphi^0)\rho \in \Sen^{\PL}(\DOM \ \varphi') \} \\
  = \ \ & \{ \rho\in \DOM(\Sen^{\PL}\varphi) \mid
      \Sen^{\PL}(\varphi^0)\rho \in \DOM(\Sen^{\PL}\varphi') \} \\
  = \ \ & \DOM(\Sen\varphi \ ; \ \Sen\varphi').
\end{aligned}$$

\item
The next step is obtained on the basis of the functoriality of
$\Sen^{\PL}$.
For each $\rho \in \DOM \ \Sen(\varphi;\varphi')$ we have: 
\[
\Sen(\varphi;\varphi')\rho = \Sen^{\PL}((\varphi^0;\varphi'^0)\rho =
\Sen^{\PL}(\varphi'^0)(\Sen^{\PL}(\varphi^0)\rho) =
\Sen(\varphi')(\Sen(\varphi)\rho).
\]

\end{itemize}

MODELS.
The $\3/2 \PL$ models and model homomorphisms are those of $\PL$,
but their reducts differ from those in $\PL$. 
Given a partial function $\varphi \co P \pto P'$ and a $P'$-model 
$M' \co P' \ra 2$, 
\[
\Mod(\varphi)M' = \{ M \co P \to 2 \mid M_p =
M'_{\varphi^0(p)} \text{ for all }p\in \DOM \ \varphi \}.
\]
On the model homomorphisms the reduct is defined by 
\[
\Mod(\varphi)(M' \subseteq N') = 
\{ M \subseteq N \mid M \in \Mod(\varphi)M', N \in \Mod(\varphi)N' \}.
\]
The main part of the lax functoriality of $\Mod$ is proved as follows.
Let $\varphi, \varphi'$ be signature morphisms such that $\cod{\varphi}
= \dom{\varphi'}$ and let $M'' \in |\Mod(\cod{\varphi'})|$. 
For any $M\in \Mod(\varphi)(\Mod(\varphi')M'')$ we show that $M \in
\Mod(\varphi;\varphi')M''$. 
Then there exists $M'\in \Mod(\varphi')M''$ such that $M \in
\Mod(\varphi)M'$.  
For any $p\in \DOM(\varphi;\varphi') = $\\
$\{ p\in \DOM \ \varphi \mid \varphi^0 (p) \in \DOM \ \varphi' \}$ we
have that 
$$\begin{array}{rll}
M_p = & M'_{\varphi^0 (p)} & \quad \text{since }p\in \DOM \ \varphi \text{
                             and } M \in \Mod(\varphi)M' \\[.5em]
    = & M''_{\varphi'^0 (\varphi^0 (p))}
      & \quad \text{since }\varphi^0 (p) \in \DOM \ \varphi' \text{
                             and } M' \in \Mod(\varphi')M'' \\[.5em]
    = & M''_{(\varphi;\varphi')^0(p)}. &
\end{array}$$
This shows that $M \in \Mod(\varphi;\varphi')M''$.

Note that $\Mod(1_P)M = \{ M \}$, hence the second condition of the
lax functoriality of $\Mod$ is satisfied in a strict sense.  

The following counterexample shows why $\Mod$ is a proper lax
functor. 
Let $\{ p_1,p_2,p_3 \} \stackrel{\varphi}{\to} \{ p, p_3 \}
\stackrel{\varphi'}{\pto} \{ p_3 \}$ be such that $\varphi(p_1) =
\varphi(p_2) = p'$, $\varphi(p_3) = p_3$ and $\DOM \ \varphi' = \{ p_3
\}$. 
Note that $\DOM (\varphi;\varphi') = \{ p_3 \}$. 
Then we consider any $\cod{\varphi'}$-model $M''$ and $M \in
\Mod(\varphi;\varphi')M''$ such that $M_{p_1} \not= M_{p_2}$. 
Because of the latter condition there is no $M'$ such that $M\in
\Mod(\varphi)M'$. 

Also in general the reduct functors $\Mod(\varphi)$ are proper lax
functors, but this works exactly the other way than in the case of
$\Mod$.    
\begin{itemize}

\item
Let $M' \subseteq N' \subseteq T' \in |\Mod(\cod{\varphi})|$. 
Given $M \subseteq T \in |\Mod(\dom{\varphi})|$ such that $M \in
\Mod(\varphi)M'$ and $T\in \Mod(\varphi)T'$, we may define $N\in
\Mod(\varphi)N'$ by $N_p = N'_{\varphi^0 (p)}$ when $p \in \DOM \
\varphi$ and $N_p = M_p$ otherwise. 
Consequently $M \subseteq N \subseteq T$. 
This shows that we have an equality
\[
\Mod(\varphi)(M' \subseteq N');\Mod(\varphi)(N'
\subseteq T') = \Mod(\varphi)(M' \subseteq T').
\]

\item
Given $M' \in \Mod(\cod{\varphi})$ and $M \in \Mod(\varphi)M'$ it is
obvious that $1_M \in \Mod(\varphi)1_{M'}$.

\end{itemize}
However $\Mod(\varphi)$ fails to be strict on the identities as shown
by the following counterexample.
Let $\varphi \co \{ p,q \} \pto \{ p \}$ such that 
$\DOM \varphi = \{ p \}$.
If we take $M' = \{ p \}$, $M = M'$ and $N = \{ p,q \}$ then we have
that $M \subseteq N \in \Mod(\varphi)1_{M'}$, which means that
$\Mod(\varphi)1_{M'}$ is strictly larger than $1_{\Mod(\varphi)M'} =
\{ 1_M \mid M \in \Mod(\varphi)M' \}$.
\vsp

SATISFACTION. \
The satisfaction relation of $\3/2 \PL$ is inherited from $\PL$.
The Satisfaction Condition is proved on the basis of that of $\PL$
as follows. 
Let $\varphi \co P \pto P'$, $M' \co P' \ra 2$ and $M \in
\Mod(\varphi)M'$ and $\rho\in \DOM(\Sen\varphi)$. 
Then 
$$\begin{array}{rll}
M' \models \Sen (\varphi)\rho \text{ \ if and only if \ } &
  M' \models \Sen^\PL (\varphi^0)\rho & 
  \quad \text{by definition of }\Sen(\varphi)\\ [.2em]
\text{ \ if and only if \ } & \varphi^0 ; M' \models \rho & 
  \quad \text{by the Satisfaction Condition in } \PL \text{ for } \varphi^0 \\[.2em]
\text{ \ if and only if \ } & 
  (\DOM \ \varphi \subseteq P) ; M \models \rho &  
  \quad \text{since } (\DOM \ \varphi \subseteq P) ; M = \varphi^0 ; M) \\[.2em]
\text{ \ if and only if \ } & M \models \rho & 
  \quad \text{by the Satisfaction Condition in } \PL \text{ for } \DOM \
                                               \varphi \subseteq P.
\end{array}$$
\end{rm}
\end{example}

\begin{example}[Many sorted algebra with partial morphisms of
  signatures -- $\3/2 \MSA$]\label{3/2-msa-ex}
\begin{rm}
In this example we extend the $\MSA$ institution to its $\3/2$ variant
in a way that parallels the extension of $\PL$ to $\3/2 \PL$. 
For this reason we will give only the definitions and rather skip the
arguments. 

Given $\MSA$ signatures, a \emph{partial $\MSA$-signatures morphism}
$\varphi \co (S,F) \pto (S',F')$ consists of 
\begin{itemize}

\item a partial function $\varphi^\st \co S \pto S'$, and 

\item for each $w \in (\DOM \varphi^\st)^*$ and $s \in \DOM
  \varphi^\st$ a partial function 
  $\varphi^\op_{w\ra s} \co F_{w\ra s} \pto F'_{\varphi^\st w\ra
    \varphi^\st s}$.

\end{itemize}
Given $\varphi \co (S,F) \pto (S',F')$ and $\varphi' \co (S',F') \pto
(S'',F'')$ their composition $\varphi;\varphi'$ is defined by 
\begin{itemize}

\item $(\varphi;\varphi')^\st = \varphi^\st ; \varphi'^\st$, and 

\item for each  $w \in (\DOM (\varphi;\varphi')^\st)^*$ and 
$s \in \DOM (\varphi;\varphi')^\st$: \  
$(\varphi;\varphi')^\op_{w\ra s} = 
\varphi^\op_{w\ra s} ; \varphi'^\op_{\varphi^\st w\ra \varphi^\st s}$. 

\end{itemize}
Given $\varphi,\theta \co (S,F) \pto (S',F')$, then $\varphi \leq
\theta$ if and only if 
\begin{itemize}

\item $\varphi^\st \subseteq \theta^\st$, and 

\item for each $w \in (\DOM \varphi^\st)^*$ and $s \in \DOM
  \varphi^\st$: \ $\varphi^\op_{w\ra s} \subseteq \theta^\op_{w\ra s}$.

\end{itemize}
Under these definitions the partial $\MSA$-signature morphisms form a
$\3/2$-category, which is the category of the $\3/2 \MSA$
signatures. 

Given a partial $\MSA$-signature morphism $\varphi$ we denote by $\DOM
\varphi$ the signature $(\DOM \varphi^\st, \DOM \varphi^\op)$ where  
$(\DOM \varphi^\op)_{w \to s} = \DOM \varphi^\op_{w\to s}$ and by
$\varphi^0 \co \DOM \varphi \to \cod{\varphi}$ the resulting (total)
$\MSA$-signature morphism.

For any signature $\Sigma$, 
$\Sen^{\3/2 \MSA}(\Sigma) = \Sen^{\MSA}(\Sigma)$ and for any partial
$\MSA$-signature morphism $\varphi$, $\Sen^{\3/2 \MSA}(\varphi)$ is
defined by 
\begin{itemize}

\item $\DOM \ \Sen^{\3/2 \MSA}(\varphi) = \Sen^\MSA (\DOM \varphi)$
and 

\item for each sentence $\rho \in \DOM \ \Sen^{\3/2 \MSA}(\varphi)$, 
$\Sen^{\3/2 \MSA}(\varphi)\rho = \Sen^\MSA (\varphi^0)\rho$. 

\end{itemize}
Like for $\3/2 \PL$ this yields also a \emph{strict} $\3/2$-functor. 
For any signature $\Sigma$, 
$\Mod^{\3/2 \MSA}(\Sigma) = \Mod^{\MSA}(\Sigma)$ and for any partial
$\MSA$-signature morphism $\varphi$, each $\cod{\varphi}$-model $M'$,
$\Mod^{\3/2 \MSA}(\varphi)M' = M$ is 
defined by 
\begin{itemize}

\item for each sort symbol $s$ in $\DOM \varphi$, $M_s = M'_{\varphi^\st s}$,
  and  

\item for each operation symbol $\sigma$ in $\DOM \varphi$, 
$M_\sigma = M'_{\varphi^\op \sigma}$.  

\end{itemize}
The definition on model homomorphisms is similar, we skip it here. 
Under these definitions, $\Mod^{\3/2 \MSA}$ is a lax functor. 

The satisfaction relation is inherited from $\MSA$, and the argument
for the Satisfaction Condition in $\3/2 \MSA$ is similar to that in
$\3/2 \PL$.  
\end{rm}
\end{example}

\begin{example}\label{3/2-submsa-ex}
\begin{rm}
The $\3/2 \MSA$ example can be twisted by considering less partiality
in the signature morphisms.
This can be done in several ways, in each case a different
$\3/2$-`sub-institution' of $\3/2 \MSA$ emerges. 
\begin{enumerate}

\item We constrain $\varphi^\st$ to be total functions.

\item We let $\varphi^\st$ to be partial functions but we constrain
  $\varphi^\op_{w\to s}$ to be total. 

\end{enumerate}
\end{rm}
\end{example}

\begin{example}\label{3/2-views-ex}
\begin{rm}
The pattern of Ex.~\ref{3/2-msa-ex} can be applied to the
extension of $\MSA$ that takes the `first order views' of \cite{Views}
in the role of signature morphisms.
Since first order views are more general the the $\MSA$ signature
morphisms, the resulting $\3/2$-institution based upon partial first
order views can thought as an extension of $\3/2 \MSA$.   
\end{rm}
\end{example}

\subsection{$\3/2$-institutional seeds}

So far the Examples
\ref{3/2-pl-ex}, \ref{3/2-msa-ex}, \ref{3/2-submsa-ex} and
\ref{3/2-views-ex} are based upon a pattern that can be described as
follows: 
\begin{enumerate}

\item Consider a concrete 1-institution (that may be quite common). 

\item Consider some form of partiality for its signature morphisms;
  often this can be done in several different ways (see
  Ex.~\ref{3/2-submsa-ex}).  

\item Keep the sentences and the models of the original institution,
  but based on the partiality of the signature morphisms extend the
  concepts of sentence translations and of model reducts to
  $\3/2$-institutional ones.
  The partiality of the sentence translations amounts to the fact that
  only the sentences that only involve symbols from the definition domain
  of the (partial) signature morphism can be translated.
  The relation-like aspect of the model reducts amounts to the fact
  that symbols that are outside the definition domain of the (partial)
  signature morphisms can be interpreted in several different ways in
  the models.  

\item The satisfaction relation of the resulting $\3/2$-institution is
  inherited from the original 1-institution. 

\end{enumerate}
This pattern pervades a lot of useful $\3/2$-institutions and can be
captured as a generic mathematical construction that derives
$\3/2$-institutions from 1-institutions; this will be the topic of  
Sect.~\ref{gen-3/2-institution-sec}. 
However there are significant examples of $\3/2$-institutions that fall
short off this pattern; two of them will appear in
Sections \ref{th-morphism-sec} and \ref{th-change-sec}, respectively.
 
In the following we propose a general scheme for defining
$\3/2$-institutions that on the one hand serves a technical purpose as
it projects a convenient mathematical perspective on situations of
interest, and on the other hand constitutes a framework for generating new
$\3/2$-institutions, some of them not necessarily being
partiality-based. 

\begin{definition}[$\3/2$-institutional seed]\label{institution-seed-dfn}
A \emph{$\3/2$-institutional seed} $(\Sign,\Sen,\Omega,T)$ consists of 
\begin{itemize}

\item a lax $\3/2$-functor $\Sen \co \Sign \to \Setp$ (the
  `sentence functor'), and 

\item a designated `signature' $\Omega \in |\Sign|$ and a `truth'
  function $T \co \Sen(\Omega) \to 2$. 

\end{itemize}
\end{definition}  

\begin{proposition}\label{seed2ins-prop}
Any $\3/2$-institutional seed $\mathcal{S} = (\Sign,\Sen,\Omega,T)$ extends 
canonically to a lax $\3/2$-institution 
$\I(\mathcal{S}) = (\Sign,\Sen,\Mod,\models)$ as
follows: 
\begin{itemize}

\item for each signature $\Sigma\in |\Sign|$ we let 
\[
\Mod(\Sigma) = 
\{ M \co \Sigma \to \Omega \mid \Sen(M) \text{ total} \},
\]

\item for each signature morphism $\varphi$ and each $\cod{\varphi}$-model
  $M'$ we let 
\[
\Mod(\varphi)M' = \{ M \mid \varphi;M' \leq M \},
\]

\item for each $\Sigma$-model $M$ and each $\Sigma$-sentence $\rho$ we
  let
\[
M \models \rho \text{ \ if and only if \ } 
T(\Sen(M)\rho) = 1.
\] 
\end{itemize}
\end{proposition}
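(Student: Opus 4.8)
The plan is to verify, in order, that $\Mod$ is a well-defined lax $\3/2$-functor $(\Sign)^{\varoplus} \ra \3/2(\CAT_{\!\!\P})$, that the relation $\models$ is well defined, and that the Satisfaction Condition \eqref{sat-cond-eq} holds; since the sentence functor of $\I(\mathcal{S})$ is the given lax $\Sen$, this already makes $\I(\mathcal{S})$ a \emph{lax} $\3/2$-institution. The conceptual core is the Satisfaction Condition, which collapses to one computation, whereas the bulk of the writing is order-theoretic bookkeeping for $\Mod$.

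First I would pin down the categorical structure of the model ``categories'': each $\Mod(\Sigma)$ is the full sub-poset of the hom-order $\Sign(\Sigma,\Omega)$ cut out by the requirement that $\Sen(M)$ be total, and I read $\Mod(\varphi)M' = \{ M \mid \varphi;M' \leq M \}$ as ranging over $|\Mod(\dom{\varphi})|$, so that reducts are by construction sets of models (possibly empty, which is harmless as the general definition admits emptiness). Antitonicity, i.e.\ $\varphi \leq \theta$ implies $\Mod(\theta)M' \subseteq \Mod(\varphi)M'$, is immediate from $\varphi;M' \leq \theta;M'$ (composition preserves the hom-orders in the $\3/2$-category $\Sign$) together with transitivity of $\leq$. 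For the lax functoriality I would take $M \in \Mod(\varphi)(\Mod(\varphi')M'')$, extract an $M'$ with $\varphi';M'' \leq M'$ and $\varphi;M' \leq M$, and chain $\varphi;\varphi';M'' \leq \varphi;M' \leq M$ via associativity and order preservation to conclude $M \in \Mod(\varphi;\varphi')M''$; the identity clause $M \in \Mod(1_\Sigma)M$ is simply $M \leq M$. Because the model categories are thin, the reducts of model homomorphisms reduce to order relations, so the lax functoriality of each $\Mod(\varphi)$ is routine and I would only sketch it.

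The heart of the argument is the Satisfaction Condition. Fix $\varphi$, a model $M' \in \Mod(\cod{\varphi})$, some $M \in \Mod(\varphi)M'$ (hence $\varphi;M' \leq M$), and $\rho \in \DOM(\Sen(\varphi))$. Since $M'$ is a model, $\Sen(M')$ is total, so $\rho \in \DOM(\Sen(\varphi);\Sen(M'))$; laxity of $\Sen$ gives $\Sen(\varphi);\Sen(M') \subseteq \Sen(\varphi;M')$, whence $\rho \in \DOM(\Sen(\varphi;M'))$; and $\varphi;M' \leq M$ together with the order preservation of $\Sen$ gives $\Sen(\varphi;M') \subseteq \Sen(M)$, so $\rho \in \DOM(\Sen(M))$ as well. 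Along this nested chain of definition domains the three partial functions agree on $\rho$, yielding the single equality
\[
\Sen(M')(\Sen(\varphi)\rho) \ = \ (\Sen(\varphi);\Sen(M'))\,\rho \ = \ \Sen(\varphi;M')\,\rho \ = \ \Sen(M)\,\rho .
\]
Applying $T$ gives $T(\Sen(M')(\Sen(\varphi)\rho)) = T(\Sen(M)\rho)$, an equality of truth values that is even stronger than the required equivalence ``$M' \models \Sen(\varphi)\rho$ iff $M \models \rho$''.

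The step I expect to require the most care is precisely the domain bookkeeping above: each of the three equalities is valid only on the intersection of the relevant definition domains, so one must confirm that $\rho$ genuinely lies in every one of them before reading off the equalities. This is exactly where the two ingredients of a seed are used --- the totality of $\Sen(M')$, built into the definition of a model, places $\rho$ in the domain of the two-step translation, and the \emph{laxity} of $\Sen$ is what identifies that two-step translation with $\Sen(\varphi;M')$. Everything else being formal, once this computation is settled the verification that $\I(\mathcal{S})$ is a lax $\3/2$-institution is complete.
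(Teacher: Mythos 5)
Your proposal is correct and follows essentially the same route as the paper's proof: lax functoriality of $\Mod$ via the chain $\varphi;\varphi';M'' \leq \varphi;M' \leq M$, the identity clause via reflexivity, and the Satisfaction Condition by combining monotonicity of $\Sen$ (from $\varphi;M' \leq M$) with laxity of $\Sen$ and totality of $\Sen(M')$ to get $\Sen(M')(\Sen(\varphi)\rho) = \Sen(M)\rho$, then applying $T$. Your write-up is in fact slightly more careful than the paper's on the definition-domain bookkeeping and on the antitonicity and hom-level laxity checks, which the paper leaves implicit.
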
  

\begin{proof}
For showing the lax functoriality of $\Mod$ we consider signature
morphisms $\varphi, \varphi'$ such that $\cod{\varphi} =
\dom{\varphi'}$ and $M'' \in \Mod(\cod{\varphi'})$. 
Then 
$$\begin{array}{rl}
  \Mod(\varphi')(\Mod(\varphi)M'') = &
  \{ M\in \Mod(\varphi)M' \mid M' \in \Mod(\varphi')M''\}  \\[.2em]
  & (\text{by the definition of composition in $\3/2
    (\CAT_\mathcal{P})$})  \\[.2em]
=  & \{ M \in \Mod(\dom{\varphi}) \mid \exists M'\in
     \Mod(\cod{\varphi}) \text{ such that } \varphi;M' \leq M,
     \varphi';M'' \leq M' \}  \\[.2em]
   & (\text{by the definitions of } \Mod(\varphi), \Mod(\varphi'))  \\[.2em]
\subseteq  & \{ M \in \Mod(\dom{\varphi}) \mid \varphi;\varphi';M'' \leq M \}  \\[.2em]
   & (\text{by the monotonicity of the composition in  } \Sign)  \\[.2em]
=  & \Mod(\varphi;\varphi')M''  \\[.2em]
  & (\text{by the definition of }\Mod(\varphi;\varphi')). 
\end{array}$$

The lax functoriality of $\Mod$ on identities may be checked as
follows:  
\[
1_{\Mod(\Sigma)}(M) = \{ M \} \subseteq \{ N \co \dom{M} \to \Omega
\mid M\leq N, \ \Sen(N) \text{ total}\} = \Mod(1_\Sigma)M.
\]

For showing the Satisfaction Condition we consider a signature
morphism $\varphi$, a $\cod{\varphi}$-model $M'$, $M \in
\Mod(\varphi)M'$ and $\rho\in \DOM \ \Sen(\dom{\varphi})$. 

Since $\varphi;M' \leq M$ by the monotonicity of $\Sen$ we have that
$\Sen(\varphi;M') \subseteq \Sen(M)$. 
By the lax property of $\Sen$ it follows that
$\Sen(\varphi);\Sen(M') \subseteq \Sen(M)$. 
Since $\rho\in \DOM \ \Sen(\varphi)$ and since $\Sen(M')$ is total it
follows that $\Sen(M')(\Sen(\varphi)\rho) = \Sen(M)\rho$. 
Consequently $T(\Sen(M')(\Sen(\varphi)\rho)) = 
T(\Sen(M)\rho)$ which means 
$M' \models \Sen(\varphi)\rho \ = \ M \models \rho$. 
\end{proof}

The following two situations show that Prop.~\ref{seed2ins-prop} is
a vehicle for obtaining natural $\3/2$-institutions.

\begin{example}[Seeds for $\3/2 \PL$, $\3/2 \MSA$]
\begin{rm}
\noindent
\begin{enumerate}

\item The $\3/2 \PL$ variant without model homomorphisms arises easily as an
$\I(\mathcal{S})$ by taking $\Omega =2$ and by taking $T$ to be the
function that evaluates Boolean terms (for example $T(\neg (0 \wedge 1))
= 1$, etc.)

\item 
Even a \emph{local} variant of $\3/2 \MSA$ without model homomorphisms
such that all carrier sets of the models are subsets of a fixed
set $U$ arises as a $\I(\mathcal{S})$ by defining 
$\Omega = (S^\Omega, F^\Omega)$ by 
\begin{itemize} 

\item $S^\Omega = 2^U$, i.e. the sets of the subsets of $S$, and

\item for any $s_1, \dots, s_n, s \subseteq U$,
  $F^\Omega_{s_1 \dots s_n \ra s}$ is the set of all functions 
  $s_1 \times \dots \times s_n \to s$. 

\end{itemize}
The truth function $T$ is based upon the evaluation of $\Omega$-terms
by recursion and functional composition as follows:
\begin{itemize}

\item Any term $t$ of sort $s$ gets evaluated as an element $T(t) \in
  s$ (note here the overloading of $T$) defined by 
\[
T(\sigma(t_1,\dots,t_n)) = \sigma(T(t_1),\dots,T(t_n)).
\]

\item For any equation $t_1 = t_2$ we set $T(t_1 = t_2) = 1$ if and
  only if $T(t_1) = T(t_2)$. 

\item The evaluation function $T$ extends to composed sentence,
  in an obvious manner in the case of the Boolean connectives, and
  as follows in the case of quantifications. 
  Given an $\Omega$-sentence $(\forall x)\rho$ where $x$ is a variable
  of sort $s$, then 
  \[
  T((\forall x)\rho) = \bigwedge \{ T(\rho(a)) \mid a \in s \}
  \]
  where $\rho(a)$ denotes the $\Omega$-sentence obtained by replacing
  each occurence of $x$ in $\rho$ by $a$.  

\end{itemize}
\end{enumerate}
\end{rm}
\end{example}

Because the definition of $\3/2$-institutional seeds involves
deceptively poor data, there is a significant space for defining
relevant $\3/2$-institutions that do not fall into the pattern of
partiality of signature morphisms. 
The following example, albeit rather artificial, may give an
indication about this potential.  

\begin{example}[A seed beyond partiality]\label{nonpartial-ex}
\begin{rm}
We let 
\begin{itemize}

\item $|\Sign| = \omega$, the set of the natural numbers,

\item arrows $m \to n$ are pairs $(a,b)$ of natural numbers such that
  $a \leq n - m$, 

\item the composition of arrows $(a,b) \co m\to n$ and $(c,d)\co n \to
  p$ is $(a+c,b\vee d) \co m \to p$ \\[.2em]
  (by $b \vee d$ we denote the maximum of $b$ and $d$); note that the
  composition is well defined, it is associative and has $(0,0)$ as
  identities. 

\end{itemize}
So far this yields a category. 
Now we make this into a $\3/2$-category. 
\begin{itemize}

\item Given $(a,b), (a',b') \co m \to n$ we let $(a,b)\leq (a',b')$ if
  and only if $a=a'$ and $b' \leq b$.
  It is easy to check that this yields a partial order which is
  preserved by the compositions. 

\end{itemize}
The lax $\3/2$-functor $\Sen \co \Sign \to \Setp$ is defined as
follows: 
\begin{itemize}

\item for each $m\in \omega$, $\Sen(m) = \{ x\in \omega \mid x\leq m
  \}$, 

\item for each arrow $(a,b) \co m \to n$ in $\Sign$, 
$\DOM \ \Sen(a,b) = \{ x\in \omega \mid x\leq m, \ x+a+b \leq n \}$ and
$\Sen(a,b)(x) = x+a$ for each $x\in \DOM \ \Sen(a,b)$.

\end{itemize}
The interested reader may check the lax functoriality properties of
$\Sen$; we skip this here. 

Now any choice of $\Omega$ and $T\co \Sen(\Omega) \to 2$ completes the
definition of a $\3/2$-institutional seed.   
\end{rm}
\end{example}

\subsection{Model amalgamation in $\3/2$-institutions}

The following definition extends the crucial notion of model
amalgamation concept from 1-institutions to $\3/2$-institutions.
For the sake of simplicity of presentation, this is presented for lax
cocones of spans, the general concept for lax cocones over arbitrary
diagrams of signature morphisms being an obvious generalisation.  
Moroever all the results in this section can be presented in that more
general framework without a real additional effort. 

\begin{definition}\label{consistent-dfn}
A \emph{model for a diagram of signature morphisms} in a
$\3/2$-institution consists of a model $M_k$ for each signature
$\Sigma_k$ in the diagram such that for each signature morphism
$\varphi \co \Sigma_i \to \Sigma_j$ in the diagram we have that 
$M_i \in \Mod(\varphi)M_j$. 

The diagram is \emph{consistent} when it has at least one model. 
\end{definition} 

\begin{definition}[Model amalgamation in
  $\3/2$-institutions]\label{amalg-dfn} 
In any $\3/2$-institution, a lax cocone for a span in the
$\3/2$-category of the signature morphisms  
\[
\xymatrix @C-2em{
 & & \Sigma & & \\
\Sigma_1 \ar@{.>}[urr]^{\theta_1} & { \ \ \ \ \leq} & &
{ \geq \ \ \ \ } &
\Sigma_2 \ar@{.>}[ull]_{\theta_2}\\
 &  & \Sigma_0 \ar[ull]^{\varphi_1} \ar[urr]_{\varphi_2} \ar@{.>}[uu]_{\theta_0}& &
}
\]
has \emph{model amalgamation} when each model of the span admits an
unique completion to a model (called the \emph{amalgamation}) of the
lax cocone.  

When dropping the uniqueness condition, the property is called
\emph{weak model amalgamation}.  
\end{definition}

Note that when the signature morphisms involved in
Dfn.~\ref{amalg-dfn} are total (or at least when the model reducts
give singletons) we get the ordinary concept of model amalgamation for
(1-)institution theory.  
This also means that $\theta_0$ and $\Sigma_0$-model become redundant. 
In the proper $\3/2$ case their presence is necessary, this being one
of the important aspects that distinguishes the $\3/2$ case from
ordinary (1-)institution theoretic model amalgamation.   

\begin{example}\label{amg-ex}
\begin{rm}
In $\3/2 \PL$, for the diagram of Dfn.~\ref{amalg-dfn}  we consider
the signatures  
$\Sigma_0 = \{ p,p',p_1,p_2 \}$, $\Sigma_1 = \{ p,p_1,p'_1 \}$,
$\Sigma_2 = \{ p,p_2,p'_2 \}$, $\Sigma = \{ p,p',p'_1,p'_2 \}$ and let
$\varphi_1$, $\varphi_2$, $\theta_0$, $\theta_1$, $\theta_2$ be the
maximal partial inclusions.  
We prove that this cocone has model amalgamation as follows.
We assume $\{ M_k \mid k=0,1,2 \}$ a model for the span
$(\varphi_1,\varphi_2)$ and define the $\Sigma$-model $M$ by $M(p) =
M_k (p)$, $M(p'_k) = M_k (p'_k)$, $k\in 1,2$, and $M(p') = M_0 (p')$.  
It is easy to see that $M$ thus defined is the unique amalgamation of
$M_0,M_1,M_2$. 
\end{rm}
\end{example}

In ordinary institution theory the causal dependency between
pushout squares and model amalgamation squares is 
central and well known (cf. \cite{modalg,iimt,sannella-tarlecki-book},
etc.).
The following result refines this to $\3/2$-institutions in a way
intended to maximize its applicability in concrete situations.  

\begin{proposition}\label{amg-seed-prop}
For any $\3/2$-institutional seed $\mathcal{S}$ and any 1-subcategory
$\T \subseteq \Sign$ such that 
\begin{itemize}

\item $\Sen$ preserves and reflects maximality 
($\varphi$ is maximal if and only if it is $\Sen$-maximal), 

\item $\T$ contains all maximal signature morphisms, and 

\item if $\varphi \in \T$ and $\varphi \leq \varphi'$ then 
$\varphi' \in \T$,

\end{itemize}
in $\I(\mathcal{S})$ each lax $\T$-pushout of signature morphisms has
weak model amalgamation. 
\end{proposition}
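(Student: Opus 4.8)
The plan is to feed the given span-model, regarded as a family of morphisms into the designated signature $\Omega$, into the universal property of the lax $\T$-pushout. First I would fix a model $(M_0,M_1,M_2)$ of the span $(\varphi_1,\varphi_2)$. By Dfn.~\ref{consistent-dfn} together with the description of reducts in $\I(\mathcal{S})$, this amounts to morphisms $M_k\co\Sigma_k\to\Omega$ with $\Sen(M_k)$ total and $\varphi_1;M_1\leq M_0\geq\varphi_2;M_2$. Being a model, each $M_k$ is $\Sen$-maximal, hence maximal by the reflecting hypothesis, hence a member of $\T$ since $\T$ contains all maximal morphisms; and the two inequalities say exactly that $(M_0,M_1,M_2)$ is a lax cocone for the span, this time with apex $\Omega$. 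Thus $(M_0,M_1,M_2)$ is a lax $\T$-cocone.

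Next I would invoke the universal property of the lax $\T$-pushout $(\theta_0,\theta_1,\theta_2)$ against this lax $\T$-cocone: there is a unique mediating $\mu\in\T$, $\mu\co\Sigma\to\Omega$, with $\theta_k;\mu=M_k$ for $k=0,1,2$. I would propose $\mu$ as the amalgamating model. One half is immediate: from $\theta_k;\mu=M_k\leq M_k$ we get $M_k\in\Mod(\theta_k)\mu$, so $(M_0,M_1,M_2,\mu)$ already satisfies all the model-of-cocone conditions of Dfn.~\ref{consistent-dfn}. What remains — and this is the crux — is to certify that $\mu$ is an actual model, i.e. that $\Sen(\mu)$ is total.

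The hard part will be showing that $\mu$ is maximal, after which the preserving direction of the maximality hypothesis gives $\Sen(\mu)$ total and hence $\mu\in|\Mod(\Sigma)|$. I would argue by contradiction: were $\mu$ not maximal, there would be some $\mu'>\mu$ in the hom-poset $\Sign(\Sigma,\Omega)$, and upward closure of $\T$ would put $\mu'\in\T$. Monotonicity of composition in the signature category yields $\theta_k;\mu'\geq\theta_k;\mu=M_k$, and maximality of each $M_k$ upgrades this to an equality $\theta_k;\mu'=M_k$. Hence $\mu'$ is a second mediating morphism in $\T$ for the same lax $\T$-cocone, and uniqueness forces $\mu'=\mu$, contradicting $\mu'>\mu$. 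Therefore $\mu$ is maximal, so it is a model and an amalgamation of $(M_0,M_1,M_2)$, which is exactly weak model amalgamation. I expect the only real subtlety to be this maximality step, where all three hypotheses on $\T$ and $\Sen$ are brought to bear at once (contains-all-maximal for $M_k\in\T$, reflecting for the maximality of the $M_k$, preserving for the totality of $\mu$, and upward closure for the contradiction); since the conclusion is only the weak property, no uniqueness of $\mu$ as an amalgamation needs to be established.
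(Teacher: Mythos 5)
Your proof is correct and follows essentially the same route as the paper's: promote the span-model to a lax $\T$-cocone with apex $\Omega$ via reflection of maximality and the contains-all-maximal hypothesis, take the unique mediating arrow $\mu\in\T$, and show $\mu$ is maximal (hence a model, by preservation) using upward closure of $\T$, monotonicity of composition, maximality of the $M_k$, and uniqueness of the mediating arrow. The only cosmetic difference is that you phrase the maximality step as a contradiction where the paper argues directly that $\mu\leq N$ forces $\mu=N$.
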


\begin{proof}
We consider a lax $\T$-pushout $(\theta_0,\theta_1,\theta_2)$ for a span
$(\varphi_1,\varphi_2)$ of signature morphisms like shown in the
diagram below, and a model $\{ M_k \mid k = 0,1,2 \}$ for the span 
$(\varphi_1,\varphi_2)$. 
By the first and second assumptions this means that we have a lax
$\T$-cocone $(M_0,M_1,M_2)$ for the span $(\varphi_1,\varphi_2)$. 
By the universal property of $(\theta_0,\theta_1,\theta_2)$ there
exists an unique signature morphism $M \co \Sigma \to \Omega$ in $\T$
such that $\theta_k ; M = M_k$ for $k=0,1,2$. 
\begin{equation}\label{amg-diag-equation}
\xymatrix @C-2em{
 & & \Omega && \\
 & & \Sigma \ar[u]^{M} & & \\
\Sigma_1 \ar@{.>}[urr]^{\theta_1} \ar@/^{1.5em}/@{->}[uurr]^{M_1} 
  & { \ \ \ \ \leqq} & &
{ \geq \ \ \ \ } &
\Sigma_2 \ar@{.>}[ull]_{\theta_2} \ar@/_{1.5em}/@{->}[uull]_{M_2}\\
 &  & \Sigma_0 \ar[ull]^{\varphi_1} \ar[urr]_{\varphi_2}
 \ar@{.>}[uu]_{\theta_0} \ar `r/60pt[u] `[uuu]_{M_0}^*++{\leq} [uuu]
& &
}
\end{equation}
In order to establish that $M$ is a model we show that $M$ is maximal;
then since $\Sen$ preserves maximality it follows that $\Sen(M)$ is
total.
 
Let $M \leq N$. 
By the third assumption it follows that $N \in \T$. 
For each $k = 0,1,2$, by the monotonicity of the composition, we have
that $M_k = \theta_k ; M \leq \theta_k ; N$. 
Because $M_k$ is maximal (as a consequence of $\Sen$ reflecting
maximality) it follows that $M_k = \theta_k ; N$ for each
$k=0,1,2$. 
By the uniqueness of $M$ as a meditating arrow between lax
$\T$-cocones it follows that $M=N$.  
Hence $M$ is maximal. 
\end{proof}

One quick note on the first condition of Prop.~\ref{amg-seed-prop}
which although holds naturally in many $\3/2$-institutions of interest
(such as those from Ex.~\ref{3/2-pl-ex}, \ref{3/2-msa-ex},
\ref{3/2-submsa-ex} and \ref{3/2-views-ex}), it has to be assumed in
the abstract setup since there are concrete situations when it does
not hold (such as the $\3/2$-institution of Ex.~\ref{nonpartial-ex}
where $\Sen$ preserves maximality but does not reflect it).\vsp

The following result gives the important information that we should in
general give up expectations that weak lax cocones may involve
`non-total' signature morphisms; this will be also used to strengthen
the conclusion of Prop.~\ref{amg-seed-prop}.  

\begin{proposition}\label{total-seed-prop}
For any $\3/2$-institutional seed $\mathcal{S}$ and any 1-subcategory
$\T \subseteq \Sign$ such that 
\begin{itemize}

\item $\Sen$ is strict, and 

\item $\T$ contains all $\Sen$-maximal signature morphisms, 

\end{itemize}
for any consistent span $(\varphi_1,\varphi_2)$ of signature morphisms
in the $\3/2$-institution $\I(\mathcal{S)}$ any of 
its each weak lax $\T$-pushout cocones $(\theta_0,\theta_1,\theta_2)$
consists only of $\Sen$-maximal signature morphisms.     
\end{proposition}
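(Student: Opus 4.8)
The plan is to use consistency to exhibit a canonical lax $\T$-cocone into $\Omega$, and then to read off $\Sen$-maximality of $(\theta_0,\theta_1,\theta_2)$ from the weak universal property together with strictness of $\Sen$. First I would unfold what a model of the span means in $\I(\mathcal{S})$. By Dfn.~\ref{consistent-dfn} and the description of $\Mod$ in Prop.~\ref{seed2ins-prop}, a model of the span $(\varphi_1,\varphi_2)$ is a triple $(M_0,M_1,M_2)$ of signature morphisms $M_k \co \Sigma_k \to \Omega$ with each $\Sen(M_k)$ total --- that is, each $M_k$ is $\Sen$-maximal in the sense of Dfn.~\ref{total-dfn} --- satisfying $\varphi_1;M_1 \leq M_0$ and $\varphi_2;M_2 \leq M_0$ (since $M_0\in \Mod(\varphi_i)M_i$ unfolds to $\varphi_i;M_i\leq M_0$). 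But these are exactly the defining inequalities of a lax cocone for $(\varphi_1,\varphi_2)$ with apex $\Omega$. As each $M_k$ is $\Sen$-maximal and $\T$ contains all $\Sen$-maximal morphisms, $(M_0,M_1,M_2)$ is in fact a lax $\T$-cocone, and consistency of the span guarantees that at least one such triple exists.

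Next I would feed this lax $\T$-cocone into the weak universal property of the weak lax $\T$-pushout $(\theta_0,\theta_1,\theta_2)$, obtaining a mediating arrow $\mu\in \T$ with $\theta_k;\mu = M_k$ for $k=0,1,2$ (the uniqueness of $\mu$ plays no role, so the weak version suffices). I would then transport this along $\Sen$: because $\Sen$ is strict, $\Sen(\theta_k);\Sen(\mu) = \Sen(\theta_k;\mu) = \Sen(M_k)$, whose right-hand side is total. The elementary fact I would invoke is that whenever a composite $f;g$ of partial functions is total its first factor $f$ is already total, since $\DOM(f;g)\subseteq \DOM(f)$. Applying this with $f=\Sen(\theta_k)$ and $g=\Sen(\mu)$ yields that $\Sen(\theta_k)$ is total, i.e.\ each $\theta_k$ is $\Sen$-maximal, which is the claim.

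The only real obstacle is the recognition that a model of the span in $\I(\mathcal{S})$ is \emph{literally} a lax $\T$-cocone into $\Omega$; this is what allows the weak universal property to engage, and it is precisely where consistency and the hypothesis that $\T$ contains all $\Sen$-maximal morphisms are used. Everything after that reduces to the one-line partial-function observation, with strictness of $\Sen$ being exactly what is needed to split the total composite $\Sen(M_k)$ as $\Sen(\theta_k);\Sen(\mu)$.
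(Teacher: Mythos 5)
Your proposal is correct and follows essentially the same route as the paper's own proof: consistency yields a lax cocone $(M_0,M_1,M_2)$ into $\Omega$ of $\Sen$-maximal morphisms, the second hypothesis makes it a lax $\T$-cocone, the weak universal property provides a mediating $\mu\in\T$ with $\theta_k;\mu=M_k$, and strictness of $\Sen$ plus the fact that $\DOM(f;g)\subseteq\DOM(f)$ forces each $\Sen(\theta_k)$ to be total. The only difference is presentational: you spell out the unfolding of $M_0\in\Mod(\varphi_i)M_i$ to $\varphi_i;M_i\leq M_0$ and the final partial-function observation, both of which the paper leaves implicit.
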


\begin{proof}
The consistency of the span means that it has a lax cocone
$(M_0,M_1,M_2)$ such that each $\Sen(M_k)$ is total for $k=0,1,2$. 
By the second assumption of the proposition it follows that this is a
$\T$-cocone. 
By the weak lax $\T$-pushout property of $(\theta_0,\theta_1,\theta_2)$
there exists an $M \co \Sigma \to \Omega$ in $\T$ such that 
$\theta_k ; M = M_k$ for $k=0,1,2$ (like in diagram
\eqref{amg-diag-equation}). 
Since $\Sen$ is strict it follows that 
$\Sen(\theta_k);\Sen(M) = \Sen(M_k)$, $k=0,1,2$.
Because $\Sen(M_k)$ is total, $\Sen(\theta_k)$ must be total too. 
\end{proof}

The outstanding condition of Prop.~\ref{total-seed-prop} is that of
consistency of the span. 
Although at the abstract level the consistency of spans has to be
assumed axiomatically, in concrete situations, spans of real signature 
morphisms are very easily consistent.  
For example in $\3/2 \PL$ it is enough to consider $(M_k)p =1$,
$k=0,1,2$, for all propositional symbols $p$, and in $\3/2 \MSA$ to
consider $M_k$, $k=0,1,2$, having a fixed singleton set $\{ * \}$ as
underlying/carrier sets.
However the concept gets real substance in $\3/2$-institutions where
the signature morphisms carry more structure than the common signature 
morphisms, an important example being given by that of theory
morphisms of Sect.~\ref{th-morphism-sec} below.  

\begin{corollary}\label{amg-seed-cor}
If in addition to the hypotheses of Prop.~\ref{amg-seed-prop} we have
that $\Sen$ is strict then the conclusion of Prop.~\ref{amg-seed-prop}
is that in $\I(\mathcal{S})$ each lax $\T$-pushout of signature
morphisms has model amalgamation.  
\end{corollary}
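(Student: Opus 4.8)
The plan is to read off the amalgamation's \emph{existence} directly from Prop.~\ref{amg-seed-prop}, whose three hypotheses are all contained among those assumed here, and to spend the actual work on the \emph{uniqueness} clause, since that is exactly what separates model amalgamation from the weak model amalgamation already established. So I would fix a lax $\T$-pushout $(\theta_0,\theta_1,\theta_2)$ for a span $(\varphi_1,\varphi_2)$ together with a model $(M_0,M_1,M_2)$ of that span. The very existence of this span-model means the span is consistent in the sense of Dfn.~\ref{consistent-dfn}, and this is the only place consistency enters.

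Next I would set up Prop.~\ref{total-seed-prop}. Because $\Sen$ preserves and reflects maximality, ``maximal'' and ``$\Sen$-maximal'' coincide, so the inherited hypothesis that $\T$ contains all maximal morphisms is precisely the hypothesis of Prop.~\ref{total-seed-prop} that $\T$ contains all $\Sen$-maximal morphisms; combined with the newly assumed strictness of $\Sen$, this lets me apply Prop.~\ref{total-seed-prop} to the consistent span and conclude that each $\theta_k$ ($k=0,1,2$) is $\Sen$-maximal, i.e.\ each $\Sen(\theta_k)$ is total.

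Then I would take two arbitrary amalgamations $M,M'\in\Mod(\Sigma)$ of $(M_0,M_1,M_2)$; unfolding the reduct formula of Prop.~\ref{seed2ins-prop}, the completion conditions $M_k\in\Mod(\theta_k)M$ read $\theta_k;M\leq M_k$, and likewise $\theta_k;M'\leq M_k$, for $k=0,1,2$. Being models, $\Sen(M)$ and $\Sen(M')$ are total, so $M,M'$ are $\Sen$-maximal, hence maximal, hence lie in $\T$. For each $k$, strictness gives $\Sen(\theta_k;M)=\Sen(\theta_k);\Sen(M)$, a composite of two total functions (the first total by the previous paragraph, the second because $M$ is a model), hence total; therefore $\theta_k;M$ is $\Sen$-maximal and, by reflection, maximal. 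From $\theta_k;M\leq M_k$ and this maximality I get $\theta_k;M=M_k$, and symmetrically $\theta_k;M'=M_k$.

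Finally, both $M$ and $M'$ are then mediating arrows in $\T$ for the lax $\T$-cocone $(M_0,M_1,M_2)$ against the lax $\T$-pushout $(\theta_0,\theta_1,\theta_2)$, so the uniqueness clause in the definition of lax $\T$-pushout forces $M=M'$, which is the desired uniqueness. The main obstacle is precisely the passage from the mere inequalities $\theta_k;M\leq M_k$ that define a completion to the equalities $\theta_k;M=M_k$ on which the universal property can act: this is where both new ingredients are indispensable, Prop.~\ref{total-seed-prop} (fed by consistency of the span) to make the $\theta_k$ total and strictness of $\Sen$ to propagate that totality across the composite $\theta_k;M$. Without strictness these inequalities may be strict, permitting genuinely distinct completions, which is exactly why Prop.~\ref{amg-seed-prop} on its own yields only weak amalgamation.
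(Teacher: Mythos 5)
Your proof is correct and follows essentially the same route as the paper's: existence from Prop.~\ref{amg-seed-prop}, then uniqueness by combining Prop.~\ref{total-seed-prop} (whose $\T$-hypothesis follows from those of Prop.~\ref{amg-seed-prop} and whose consistency hypothesis is supplied by the given span-model) with strictness of $\Sen$ and reflection of maximality to turn the inequalities $\theta_k;M \leq M_k$ into equalities, and finally invoking the uniqueness of mediating arrows for lax $\T$-pushouts. You are in fact slightly more explicit than the paper in noting where consistency enters and in verifying that the two amalgamations lie in $\T$.
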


\begin{proof}
Let us suppose that a model $\{ M_k \mid k=0,1,2 \}$ of the span
$(\varphi_1,\varphi_2)$ has two amalgamations $M$ and $N$.
In other words $\theta_k;M, \ \theta_k;N \leq M_k$ for $k=0,1,2$.  

Note that the second assumption of Prop.~\ref{total-seed-prop} is a
consequence of the assumptions of Prop.~\ref{amg-seed-prop}.
By the strictness of $\Sen$ we have that $\Sen(\theta_k;M) =
\Sen(\theta_k);\Sen(M)$ for $k=0,1,2$ and likewise for $N$. 
Since $\Sen(\theta_k)$ (by Prop.~\ref{total-seed-prop}), $\Sen(M)$,
$\Sen(N)$ (since $M,N$ are models) are total functions, it
follows that all $\Sen(\theta_k;M)$, $\Sen(\theta_k;N)$, $k=0,1,2$,
are total functions too.  
By the first assumption of Prop.~\ref{amg-seed-prop} it follows that
all $\theta_k;M$, $\theta_k;N$, $k=0,1,2$, are maximal.
Hence $\theta_k;M = \theta_k;N = M_k$, $k=0,1,2$.  
By the uniqueness part of the universal property of lax $\T$-pushouts
it follows that $M=N$. 
\end{proof}

The following corollary indicates that the result of
Cor.~\ref{amg-seed-cor} covers many concrete situations of interest. 

\begin{corollary}
In both $\3/2 \PL$ and $\3/2 \MSA$ each lax $\T$-pushout of signature
morphisms has model amalgamation in any of the following situations
for $\T$ (the latter two apply only for $\3/2 \MSA$):
\begin{enumerate}

\item all signature morphisms,

\item the total signature morphisms, 

\item the signature morphisms that are total on the sort symbols,
  i.e. $\varphi^{\st}$ are total functions, and

\item the signature morphisms that are total on the operation symbols,
  i.e. $\varphi^{\op}_{w\to s}$ are total functions. 

\end{enumerate}
\end{corollary}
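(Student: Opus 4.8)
The plan is to derive everything from Cor.~\ref{amg-seed-cor}. First I would realise both $\3/2 \PL$ and $\3/2 \MSA$ as $\I(\mathcal{S})$ for the seeds exhibited in the example following Prop.~\ref{seed2ins-prop}; since the model amalgamation of Dfn.~\ref{amalg-dfn} refers only to model \emph{objects}, the fact that those seeds carry no model homomorphisms is immaterial. Strictness of $\Sen$, which Cor.~\ref{amg-seed-cor} adds to the hypotheses of Prop.~\ref{amg-seed-prop}, has already been established in Ex.~\ref{3/2-pl-ex} and Ex.~\ref{3/2-msa-ex}. It therefore remains to verify (a) that $\Sen$ preserves and reflects maximality, and (b) the three conditions on $\T$ in each listed case.

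For (a) I would observe that in both institutions a signature morphism $\varphi$ is maximal in its hom-set precisely when it is total, i.e.\ when $\DOM\varphi$ is the whole of $\dom{\varphi}$. Since $\DOM(\Sen\varphi) = \Sen(\DOM\varphi)$, the function $\Sen(\varphi)$ is total iff $\Sen(\DOM\varphi) = \Sen(\dom{\varphi})$; as every sort occurs in a sentence such as $(\forall x{:}s)(x=x)$ and every operation symbol in one such as $(\forall \bar x)(\sigma(\bar x)=\sigma(\bar x))$, this equality holds iff $\DOM\varphi = \dom{\varphi}$. Hence $\varphi$ is maximal iff $\Sen$-maximal, which is exactly the required preservation and reflection.

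For (b), cases (1) and (2) are immediate: $\T = \Sign$ is trivially a subcategory containing all maximal morphisms and is upward closed, while the total morphisms form a subcategory, contain every maximal morphism (the two classes coincide), and are vacuously upward closed since a maximal morphism has nothing strictly above it. Case (3) uses $(\varphi;\psi)^{\st} = \varphi^{\st};\psi^{\st}$ to see that $\st$-totality is closed under composition and holds for identities; it contains all total morphisms; and it is upward closed because $\varphi \leq \varphi'$ forces $\varphi^{\st} \subseteq \varphi'^{\st}$, so that $\varphi^{\st}$ total makes $\varphi'^{\st}$ total. In each of these three cases all hypotheses of Cor.~\ref{amg-seed-cor} hold and the conclusion follows directly.

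The main obstacle is case (4). The $\op$-total morphisms do form a subcategory (by the composition formula $(\varphi;\psi)^{\op}_{w\to s} = \varphi^{\op}_{w\to s};\psi^{\op}_{\varphi^{\st}w\to\varphi^{\st}s}$) and contain all total morphisms, but they are \emph{not} upward closed: passing from $\varphi$ to a larger $\varphi'$ may enlarge $\DOM\varphi^{\st}$, and on the operation symbols over the freshly added sorts $\varphi'^{\op}$ need not be total. Hence Cor.~\ref{amg-seed-cor} does not apply verbatim, and I would instead re-run the existence argument of Prop.~\ref{amg-seed-prop}, replacing the appeal to upward closure by the remark that an $\op$-total morphism which is not maximal must fail totality on its sort component, and therefore extends to a genuinely total---hence maximal, hence $\op$-total---morphism $N > M$ into $\Omega$ (choosing nonempty interpretations for the missing sorts so that the remaining operation symbols admit total interpretations). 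Then $M_k = \theta_k;M \leq \theta_k;N$ together with maximality of each $M_k$ gives $\theta_k;N = M_k$, so $N$ is a second mediating arrow in $\T$; uniqueness in the lax $\T$-pushout forces $N = M$, contradicting $M < N$, and so $M$ is maximal. Uniqueness of the amalgamation then follows exactly as in the proof of Cor.~\ref{amg-seed-cor}, since that argument relies only on strictness of $\Sen$, on Prop.~\ref{total-seed-prop} (whose second hypothesis holds because $\T$ contains all $\Sen$-maximal morphisms), and on uniqueness of mediating arrows, none of which needs upward closure.
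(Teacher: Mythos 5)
Your overall route coincides with the paper's own proof, which is precisely the reduction to Cor.~\ref{amg-seed-cor} via the seed presentations, stated there very tersely: the paper recalls that $\3/2 \PL$ is an $\I(\mathcal{S})$, handles the cardinality obstruction for $\3/2 \MSA$ by localising to a universe $U$ chosen as the union of the carriers of the given model of the span (a step you gloss over --- the full $\3/2 \MSA$ is \emph{not} an $\I(\mathcal{S})$, and for the uniqueness half of amalgamation $U$ must also absorb the carriers of any two competing amalgamations), and then asserts that the hypotheses of Prop.~\ref{amg-seed-prop} and Cor.~\ref{amg-seed-cor} ``can be checked quite easily'' in all four cases. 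Your verifications for cases (1)--(3) are correct, and your observation about case (4) is sharper than the paper itself: under the literal reading (each component $\varphi^\op_{w \ra s}$, with $w,s$ ranging over $\DOM\varphi^\st$, is total) this class is indeed \emph{not} upward closed, since $\varphi \leq \varphi'$ constrains $\varphi'^\op$ only over arities within $\DOM\varphi^\st$; so Cor.~\ref{amg-seed-cor} does not apply verbatim and the paper's ``quite easily'' is, for this case, not accurate. Your remark that the uniqueness argument survives without upward closure is also correct.

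The genuine gap is in your repair of the existence part of case (4). The pivotal claim --- that an $\op$-total, non-maximal $M \co \Sigma \to \Omega$ extends to a total $N > M$ by ``choosing nonempty interpretations for the missing sorts'' --- is false, because $S^\Omega = 2^U$ contains $\emptyset$ and $\MSA$ carriers may be empty. Take $\Sigma$ with sorts $s_0, s_1$, one operation $\sigma \co s_1 \ra s_0$ and one constant $\tau$ of sort $s_1$, and let $\DOM M^\st = \{ s_0 \}$ with $M^\st (s_0) = \emptyset$; all $\op$-components of $M$ are then empty, hence vacuously total, so $M \in \T$, and $M$ is not maximal (it extends by defining $M^\st$ on $s_1$ and leaving the new $\op$-components empty). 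But a \emph{total} $N \geq M$ would need $N^\st (s_1) = \emptyset$ in order to interpret $\sigma$ (no function from a nonempty set to $\emptyset$ exists) and simultaneously $N^\st (s_1) \neq \emptyset$ in order to interpret $\tau$; so no total extension exists. To salvage your argument you would have to prove that such an $M$ can never occur as the mediating arrow of a lax $\T$-pushout cocone of models, which you do not address. Moreover, the same empty-carrier phenomenon already undercuts your step (a): a morphism into $\Omega$ sending a sort $s$ to $\emptyset$, a sort $t$ to a nonempty set, and undefined on an operation $\sigma \co t \ra s$ is maximal without being total, so ``maximal iff $\Sen$-maximal'' fails --- though this defect is inherited from the paper's own proof rather than introduced by you. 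Both problems disappear if one works with nonempty carriers (equivalently, takes $S^\Omega$ to be the \emph{nonempty} subsets of $U$); alternatively, case (4) needs no extension argument at all if one reads ``total on the operation symbols'' in the stronger sense that every operation symbol of the source signature is translated (which forces the sorts of its arity into $\DOM\varphi^\st$): that class is a 1-subcategory, contains all total morphisms, and \emph{is} upward closed, so Cor.~\ref{amg-seed-cor} applies to it exactly as in your cases (1)--(3).
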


\begin{proof}
Recall from Sect.~\ref{3/2-institution-sec} how $\3/2 \PL$ arises as
an $\I(\mathcal{S})$. 
In the case of $\3/2 \MSA$, although due to cardinality issues it
cannot be presented as a whole as an $\I(\mathcal{S})$, we may
consider `localised' versions that have all carriers of models
included in a fixed set $U$. 
Thus, given a span of signature morphisms an a model $\{ M_k \mid k =
0,1,2 \}$ of it, we may take $U$ to be the union of all the carrier
sets in $M_0, M_1, M_2$. 
Then the hypotheses of Prop.~\ref{amg-seed-prop} and
Cor.~\ref{amg-seed-cor} can be checked quite easily in each of the
cases for $\T$ \ listed in the statement of the corollary.  
\end{proof}

So far we have established model amalgamation for classes of lax
cocones that enjoy a universal property of a colimit. 
In the following we develop some results that may be used to extend
model amalgamation to other classes of lax cocones. 
First we need a couple of new concepts.  

\begin{definition}[Model conservativeness]
In a $\3/2$-institution a signature morphism $\varphi$ is \emph{model
  conservative} when for each $\dom{\varphi}$-model $M$ there exists a
$\cod{\varphi}$-model $M'$ such that $M \in \Mod(\varphi)M'$. 
\end{definition}

In general, in many concrete situations of interest -- $\3/2 \PL$ and
$\3/2 \MSA$ included -- a signature morphism is model conservative if
and only if it is injective (this does not exclude the possibility of
partiality).   

\begin{definition}[Model strictness]\label{model-strict-dfn}
In a $\3/2$-institution a signature morphism $\varphi$ is
\emph{model$\Mod$-strict} when for each signature morphism $\theta$ such
that $\cod{\theta} = \dom{\varphi}$ we have that 
\[
\Mod(\varphi);\Mod(\theta)=\Mod(\theta;\varphi).
\]
\end{definition} 

In general, in many concrete situations of interest -- $\3/2 \PL$ and
$\3/2 \MSA$ included -- a signature morphism is $\Mod$-strict whenever 
it is total. 
One way to see this is through the following general result. 

\begin{proposition}\label{model-strict-prop}
For any $\3/2$-institutional seed $\mathcal{S}$, any $\Sen$-maximal
signature morphism is $\Mod$-strict in the associated
$\3/2$-institution $\I(\mathcal{S})$. 
\end{proposition}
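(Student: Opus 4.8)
The plan is to fix a $\Sen$-maximal signature morphism $\varphi$ together with an arbitrary $\theta$ satisfying $\cod{\theta} = \dom{\varphi}$, and to establish the equality $\Mod(\varphi);\Mod(\theta) = \Mod(\theta;\varphi)$ of Kleisli arrows pointwise. Unfolding the composition in $\3/2 (\CAT_{\!\!\P})$ exactly as in the proof of Prop.~\ref{seed2ins-prop}, for every $M'\in \Mod(\cod{\varphi})$ a $\dom{\theta}$-model $N$ lies in $(\Mod(\varphi);\Mod(\theta))(M')$ precisely when there is some $M\in\Mod(\dom{\varphi})$ with $\varphi;M'\leq M$ and $\theta;M\leq N$, whereas $N\in \Mod(\theta;\varphi)M'$ means simply $\theta;\varphi;M'\leq N$. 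The inclusion $\subseteq$ is already guaranteed by the lax functoriality of $\Mod$ (from any such $M$ one obtains $\theta;\varphi;M'\leq\theta;M\leq N$ by associativity and by monotonicity of composition in $\Sign$), so the entire content of the proposition is the reverse inclusion $\supseteq$.

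For $\supseteq$ I would produce the required witness explicitly: given $N$ with $\theta;\varphi;M'\leq N$, set $M := \varphi;M'$. The two order conditions are then immediate --- $\varphi;M'\leq M$ holds by reflexivity, and $\theta;M = \theta;\varphi;M'\leq N$ holds by assumption --- so the only genuine verification is that $M$ qualifies as a model at all, i.e.\ that $\Sen(M)$ is total.

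This last point is where $\Sen$-maximality enters, and it is the crux of the argument. Since $\Sen$ is lax, $\Sen(\varphi);\Sen(M')\subseteq \Sen(\varphi;M') = \Sen(M)$. Now $\Sen(\varphi)$ is total because $\varphi$ is $\Sen$-maximal, and $\Sen(M')$ is total because $M'$ is a model; the composite of two total partial functions is total, so $\Sen(\varphi);\Sen(M')$ is defined on all of $\Sen(\dom{\varphi})$, and a partial function containing a total one is itself total. Hence $\Sen(M)$ is total and $M\in\Mod(\dom{\varphi})$, which places $N$ in $\Mod(\theta)M$ with $M\in\Mod(\varphi)M'$ and establishes $\supseteq$, and therefore the desired equality. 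I expect the only delicate step to be this totality check, since it is precisely the interplay of the laxity of $\Sen$ with the maximality hypothesis that rescues the candidate $M=\varphi;M'$; the two order inequalities and the $\subseteq$ inclusion are routine.
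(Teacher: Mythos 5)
Your proposal is correct and follows essentially the same route as the paper's own proof: both reduce the claim to the inclusion $\Mod(\theta;\varphi)M' \subseteq \Mod(\theta)(\Mod(\varphi)M')$ (the other direction being lax functoriality of $\Mod$), exhibit the witness $\varphi;M'$, and verify its modelhood by combining the laxity of $\Sen$ with the totality of $\Sen(\varphi)$ (from $\Sen$-maximality) and of $\Sen(M')$. The only difference is presentational --- you make explicit the routine order-theoretic checks that the paper leaves implicit.
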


\begin{proof}
Since the other inclusion holds by the lax functoriality of $\Mod$, we
need only to prove that for each $\cod{\varphi}$-model $M''$ we have
that 
\[
\Mod(\theta;\varphi)M'' \subseteq \Mod(\theta)(\Mod(\varphi)M'').
\]
Any $M \in \Mod(\theta;\varphi)M''$ is characterised by the properties
that $\Sen(M)$ is total and that 
\begin{equation}\label{strict-equation}
\theta;\varphi;M'' \leq M.
\end{equation}
Now since $\Sen(\varphi)$ and $\Sen(M'')$ are total functions it
follows that their composition is a total function too, hence by the
lax functoriality of $\Sen$ is follows that $\Sen(\varphi;M'')$ is
a total function too. 
This means that $\varphi;M''$ is a model in $\Mod(\varphi)M''$. 
This and \eqref{strict-equation} imply that $M \in
\Mod(\theta)(\Mod(\varphi)M'')$.   
\end{proof}



\begin{proposition}\label{weak-amg-prop}
In any $\3/2$-institution, consider a lax cocone
$(\theta_0,\theta_1,\theta_2)$ of a span of signature morphisms
$(\varphi_1,\varphi_2)$ and a signature morphism $\mu$ such that 
$\cod{\theta}=\dom{\mu}$.
Then 
\begin{enumerate}

\item if the lax cocone $\theta$ has weak model amalgamation and $\mu$ is
  model conservative then the lax cocone $\theta;\mu$ has it too, and 

\item if there exists a lax cocone $\theta'$ that has weak model
  amalgamation and such that $\theta;\mu \leq \theta'$, and
  $\mu$ is $\Mod$-maximal and model $\Mod$-strict then the lax cocone 
  $\theta$ has weak model amalgamation too.   

\end{enumerate}
\end{proposition}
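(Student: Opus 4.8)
The plan is to treat the two parts separately, in each case starting from an arbitrary model $\{M_0,M_1,M_2\}$ of the span $(\varphi_1,\varphi_2)$ (so that $M_0\in\Mod(\varphi_1)M_1$ and $M_0\in\Mod(\varphi_2)M_2$) and then producing a model over the tip of the relevant lax cocone. A preliminary observation used throughout is that $\theta;\mu=(\theta_0;\mu,\theta_1;\mu,\theta_2;\mu)$ is again a lax cocone for $(\varphi_1,\varphi_2)$: from the lax-commutativity inequalities $\varphi_i;\theta_i\leq\theta_0$ and the monotonicity of composition in the $\3/2$-category $\Sign$ one gets $\varphi_i;(\theta_i;\mu)\leq\theta_0;\mu$, so the defining inequalities survive post-composition with $\mu$.

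For part (1), weak model amalgamation of $\theta$ supplies a $\cod{\theta}$-model $M$ with $M_k\in\Mod(\theta_k)M$ for $k=0,1,2$. Since $\mu$ is model conservative, applied to this $M$ it yields a $\cod{\mu}$-model $M'$ with $M\in\Mod(\mu)M'$. The key step is then to compose reducts: from $M\in\Mod(\mu)M'$ and $M_k\in\Mod(\theta_k)M$ we read off $M_k\in\Mod(\theta_k)(\Mod(\mu)M')$, and the lax functoriality of $\Mod$, in the form of the inclusion $\Mod(\theta_k)(\Mod(\mu)M')\subseteq\Mod(\theta_k;\mu)M'$, gives $M_k\in\Mod(\theta_k;\mu)M'$ for each $k$. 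Hence $M'$ completes $\{M_0,M_1,M_2\}$ to a model of $\theta;\mu$, which is exactly weak model amalgamation for $\theta;\mu$.

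For part (2) I would run essentially the same computation in reverse. Weak model amalgamation of the cocone $\theta'$ gives a model $M'$ over its tip with $M_k\in\Mod(\theta'_k)M'$. Because $\theta_k;\mu\leq\theta'_k$ and $\Mod$ is a $\3/2$-functor, hence order-reversing on hom-sets ($\Mod(\theta'_k)M'\subseteq\Mod(\theta_k;\mu)M'$), we obtain $M_k\in\Mod(\theta_k;\mu)M'$. Now model $\Mod$-strictness of $\mu$ splits this composite reduct as an equality $\Mod(\theta_k;\mu)=\Mod(\mu);\Mod(\theta_k)$, so $M_k\in\Mod(\theta_k)(\Mod(\mu)M')$; equivalently there is some $N_k\in\Mod(\mu)M'$ with $M_k\in\Mod(\theta_k)N_k$. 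Finally, $\Mod$-maximality of $\mu$ forces $\Mod(\mu)M'$ to be a singleton $\{M\}$, whence $N_0=N_1=N_2=M$ and $M_k\in\Mod(\theta_k)M$ for all $k$; this single $M$ is the required amalgamation for $\theta$.

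The main obstacle, and the place where the two extra hypotheses of part (2) earn their keep, is the final step. A priori the three memberships $M_k\in\Mod(\theta_k;\mu)M'$ witness three possibly distinct intermediate models $N_0,N_1,N_2$ over $\cod{\theta}$, whereas a model of the cocone $\theta$ demands a \emph{single} model $M$ over the tip through which all three reducts factor. It is precisely $\Mod$-maximality of $\mu$ that collapses $\{N_0,N_1,N_2\}$ to one model, while model $\Mod$-strictness is what upgrades the generic lax inclusion for $\Mod(\theta_k;\mu)$ to the equality $\Mod(\mu);\Mod(\theta_k)$ needed to perform that factorization. Beyond this, the only care required is in bookkeeping the two opposite directions involved—the lax-functoriality inclusions of $\Mod$ versus its order-reversal on hom-sets—so that each membership points the right way.
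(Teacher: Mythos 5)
Your proof is correct and follows essentially the same route as the paper's: part (1) composes the amalgamation of $\theta$ with a model supplied by conservativeness of $\mu$ and applies the lax inclusion $\Mod(\theta_k)(\Mod(\mu)M')\subseteq\Mod(\theta_k;\mu)M'$, while part (2) uses order-reversal of $\Mod$ to pass from $\theta'$ to $\theta;\mu$, then $\Mod$-strictness and $\Mod$-maximality of $\mu$ to factor through the unique model in $\Mod(\mu)M'$. Your preliminary check that $\theta;\mu$ is again a lax cocone is a small point the paper leaves implicit, but otherwise the two arguments coincide.
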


\begin{proof}
1.
Consider a model $\{ M_k \mid k=0,1,2 \}$ for the span
$(\varphi_1,\varphi_2)$. 
There exists a $\cod{\theta}$-model $M$ such that $M_k \in
\Mod(\theta_k)M$,  $k=0,1,2$. 
Since $\mu$ is model conservative there exists a model $M'$ such that
$M\in \Mod(\mu)M'$. 
Then for each $k\in 0,1,2$, $M_k \in \Mod(\theta_k)(\Mod(\mu)M')
\subseteq \Mod(\theta_k;\mu)M'$ (by the lax property of $\Mod$). 
Hence $M'$ is an amalgamation of $\{ M_k \mid k=0,1,2 \}$.  \vsp

2.
Consider a model $\{ M_k \mid k=0,1,2 \}$ for the span
$(\varphi_1,\varphi_2)$. 
There exists a
$\cod{\mu}$-model $M'$ such that $M_k \in \Mod(\theta'_k)M'$,
$k=0,1,2$. 
Since $\theta_k;\mu \leq \theta'_k$, $k=0,1,2$, and since $\Mod$
preserves orders, we have that 
$\Mod(\theta'_k)M' \subseteq\Mod(\theta_k;\mu)M'$,
$k=0,1,2$.    
Hence  $M_k \in \Mod(\theta_k ;\mu)M'$, $k=0,1,2$.
 
By the $\Mod$-maximality assumption we have that 
$\Mod(\mu)M' = \{ M \}$. 
By the $\Mod$-strictness assumption it follows that for each $k=0,1,2$, 
$M_k \in \Mod(\theta_k)(\Mod(\mu)M') = \Mod(\theta_k)M$. 
Hence $M$ is an amalgamation of $\{ M_k \mid k = 0,1,2 \}$. 
\end{proof}

We can combine Prop.~\ref{amg-seed-prop} and \ref{weak-amg-prop} for
getting a larger class of lax cocones enjoying weak model
amalgamation. 

\begin{corollary}\label{weak-amg-seed-cor}
Under the hypotheses of Prop.~\ref{amg-seed-prop} we consider a lax
cocone $(\theta_0,\theta_1,\theta_2)$ for a span of signature
morphisms $(\varphi_1,\varphi_2)$ and a signature morphism $\mu$ such
that $\cod{\theta}=\dom{\mu}$.
Then 
\begin{enumerate}

\item if $\theta$ is a lax $\T$-pushout and $\mu$ is model
  conservative then the lax cocone $\theta;\mu$ has weak model
  amalgamation, and   

\item if there exists a lax $\T$-pushout $\theta'$ such that
  $\theta;\mu \leq \theta'$ and $\mu$ is $\Sen$-maximal
  then the lax cocone $\theta$ has weak model amalgamation.

\end{enumerate}
\end{corollary}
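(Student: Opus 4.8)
The plan is to assemble the two clauses from the amalgamation-transfer results of Prop.~\ref{weak-amg-prop}, using Prop.~\ref{amg-seed-prop} to supply the required weak model amalgamation for the lax $\T$-pushouts that occur in the hypotheses; the composites $\theta;\mu$ are well-formed throughout since $\cod{\theta}=\dom{\mu}$. For part~1 I would first invoke Prop.~\ref{amg-seed-prop}: since its hypotheses are in force and $\theta$ is a lax $\T$-pushout, the lax cocone $\theta$ has weak model amalgamation. As $\mu$ is model conservative by assumption, part~1 of Prop.~\ref{weak-amg-prop} then applies verbatim and yields weak model amalgamation for $\theta;\mu$.

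For part~2 the target is Prop.~\ref{weak-amg-prop}(2), whose four ingredients I would verify in turn. The lax $\T$-pushout $\theta'$ has weak model amalgamation by Prop.~\ref{amg-seed-prop}, and $\theta;\mu \leq \theta'$ is given. Model $\Mod$-strictness of $\mu$ comes for free from Prop.~\ref{model-strict-prop}, since $\mu$ is $\Sen$-maximal. The only point that is not immediate is the $\Mod$-maximality of $\mu$, which I would derive from its $\Sen$-maximality as follows.

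Fix a $\cod{\mu}$-model $M'$. Since $\mu$ is $\Sen$-maximal, $\Sen(\mu)$ is total, and $\Sen(M')$ is total because $M'$ is a model; by the lax functoriality of $\Sen$ we have $\Sen(\mu);\Sen(M') \subseteq \Sen(\mu;M')$, and because the left-hand side is already a total function the inclusion is forced to be an equality (a partial function containing a total one coincides with it), so $\Sen(\mu;M')$ is total. Hence $\mu;M'$ is $\Sen$-maximal, and by the first hypothesis of Prop.~\ref{amg-seed-prop} (a morphism is maximal iff it is $\Sen$-maximal) it is maximal in its hom-set. Recalling from Prop.~\ref{seed2ins-prop} that $\Mod(\mu)M'$ is the set of models $M$ with $\mu;M' \leq M$, the morphism $\mu;M'$ is itself such a model, and its maximality forces every such $M$ to equal $\mu;M'$, so $\Mod(\mu)M' = \{ \mu;M' \}$. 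Thus $\mu$ is $\Mod$-maximal, and with all four hypotheses verified Prop.~\ref{weak-amg-prop}(2) delivers weak model amalgamation for $\theta$.

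I expect this last derivation --- upgrading $\Sen$-maximality of $\mu$ to $\Mod$-maximality --- to be the only genuinely new step; the rest is bookkeeping that threads the already-established Prop.~\ref{amg-seed-prop}, Prop.~\ref{weak-amg-prop} and Prop.~\ref{model-strict-prop} together. The subtlety to watch is that this upgrade relies essentially on the seed-specific description of $\Mod$ in Prop.~\ref{seed2ins-prop} together with the assumption that $\Sen$ reflects maximality; outside the seed setting $\Sen$-maximality need not entail $\Mod$-maximality, so the argument genuinely uses that we work in $\I(\mathcal{S})$ rather than in an arbitrary $\3/2$-institution.
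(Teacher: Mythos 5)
Your proposal is correct and follows essentially the same route as the paper: part~1 by combining Prop.~\ref{amg-seed-prop} with Prop.~\ref{weak-amg-prop}(1), and part~2 by obtaining $\Mod$-strictness of $\mu$ from Prop.~\ref{model-strict-prop} and upgrading $\Sen$-maximality to $\Mod$-maximality via the seed description of $\Mod$ together with the reflection-of-maximality hypothesis, before applying Prop.~\ref{weak-amg-prop}(2). Your write-up is in fact slightly more explicit than the paper's (spelling out why the inclusion $\Sen(\mu);\Sen(M')\subseteq\Sen(\mu;M')$ collapses to an equality, and why maximality of $\mu;M'$ pins down $\Mod(\mu)M'$), but the underlying argument is the same.
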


\begin{proof}
While 1. is a direct consequence of  Prop.~\ref{amg-seed-prop} and
\ref{weak-amg-prop}, the argument for 2. needs a bit of elaboration.  
By Prop.~\ref{model-strict-prop} we get that $\mu$ is $\Mod$-strict. 

Now let $M$ be any $\cod{\mu}$-model.
Because $\Sen(\mu)$ and $\Sen(M)$ are total functions, by the lax
functoriality of $\Sen$ it follows that $\Sen(\mu;M)$ is a total
function too. 
Since $\Sen$ reflects maximality (one of the hypothesis of
Prop.~\ref{amg-seed-prop}) it follows that $\mu;M$ is maximal, hence
$\Mod(\mu)M = \{ \mu;M \}$. 
This shows that $\mu$ is $\Mod$-maximal. 

Now all conditions of Prop.~\ref{amg-seed-prop} and
\ref{weak-amg-prop} are fulfilled, therefore the conclusion
2. follows. 
\end{proof}

\begin{example}
\begin{rm}
The (weakened version of the) model amalgamation situation of
Ex.~\ref{amg-ex} can be obtained from Cor.~\ref{weak-amg-seed-cor}
(2.) as follows. 
\begin{itemize}

\item We set $\T$ \ to be the class of the total functions.

\item For each $k=0,1,2$ we let $\theta'_k$ to be the inclusion of
  $\Sigma_k$ into  $\{ p,p',p_1,p'_1,p_2,p'_2 \}$.  
  This is a $\T$-pushout. 

\item We let $\mu$ be the inclusion $\{ p,p',p'_1,p'_2 \} \subseteq 
\{ p,p',p_1,p'_1,p_2,p'_2 \}$. 

\end{itemize}
\end{rm}
\end{example}

\subsection{Theory morphisms in $\3/2$-institutions}\label{th-morphism-sec}

In 1-institution theory, the concept of theory morphism plays an
important role in connection to foundational works in computer science. 
It was one of the central institution theoretic concepts introduced
and studied in the seminal publication
\cite{Goguen-Burstall:Institutions-1992}.  
The mathematical foundations of conceptual blending are based on
theory morphisms since concepts are modelled as logical theories and
their translations as theory morphisms
\cite{Goguen:Algebraic-Semiotics-1999,Goguen:Mathematical-Models-of-Cognitive-Space-and-Time-2006}.  
While theories in $\3/2$-institutions are the same as theories in
1-institutions, the $\3/2$-institution theoretic concept of theory
morphism is much more subtle because of the partiality of the sentence
translations.    
In fact there are at least four ways to extend the 1-institution
concept of theory morphism to $\3/2$-institutions. 

\begin{definition}\label{th-morphism-dfn}
In a $\3/2$-institution a \emph{theory} $(\Sigma,E)$ consists of a
signature $\Sigma$ and a set $E$ of $\Sigma$-sentences 
($E \subseteq \Sen(\Sigma)$). 

Given two theories $(\Sigma,E)$ and $(\Sigma',E')$ in a
$\3/2$-institution, a signature morphism 
$\varphi \co \Sigma \to \Sigma'$ is  
\begin{itemize}

\item a \emph{pseudo-morphism of theories} when $\Sen(\varphi)E
  \subseteq E'^\bullet$,  

\item a \emph{weak morphism of theories} when $\Sen(\varphi)E^\bullet
  \subseteq E'^\bullet$,  

\item a \emph{strong morphism of theories} when for each
  $\Sigma'$-model $M'$ such that $M' \models E'$ there exists $M \in
  \Mod(\varphi)M'$ such that $M \models E$, and 

\item a \emph{ultra-strong morphism of theories} when for all
  $\Sigma'$-models $M'$ and $\Sigma$-models $M$ such that $M' \models
  E'$ and $M \in \Mod(\varphi)M'$ we have that $M \models E$.  

\end{itemize}
\end{definition}

\begin{fact}
Any weak morphism is pseudo-morphism, any strong morphism is
weak.
If $\Mod$ does not admit emptiness then any ultra-strong morphism is
strong.  
\end{fact}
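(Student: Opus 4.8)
The statement bundles three separate implications, and the plan is to treat each in turn; only the second genuinely invokes the institutional machinery, while the first and third are essentially bookkeeping about definition domains and emptiness. For \emph{weak $\Rightarrow$ pseudo}, I would first record that $E \subseteq E^\bullet$, which is immediate from the definition $E^\bullet = \{\rho \mid E \models \rho\}$ since trivially $E \models e$ for every $e \in E$. Because the image of a set under a partial function is monotone (directly from $f(A') = \{b \mid \exists a \in A',\ (a,b) \in f\}$), it follows that $\Sen(\varphi)E \subseteq \Sen(\varphi)E^\bullet$. Composing with the weak-morphism hypothesis $\Sen(\varphi)E^\bullet \subseteq E'^\bullet$ yields $\Sen(\varphi)E \subseteq E'^\bullet$, which is exactly the pseudo-morphism condition.

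For \emph{strong $\Rightarrow$ weak}, the goal is $\Sen(\varphi)E^\bullet \subseteq E'^\bullet$. I would fix an arbitrary element of $\Sen(\varphi)E^\bullet$; by the definition of the partial-function image this is $\Sen(\varphi)\rho$ for some $\rho \in E^\bullet \cap \DOM(\Sen(\varphi))$, so the membership $\rho \in \DOM(\Sen(\varphi))$ needed to invoke the Satisfaction Condition is automatic. To show $\Sen(\varphi)\rho \in E'^\bullet$ I take any $\Sigma'$-model $M'$ with $M' \models E'$ and aim at $M' \models \Sen(\varphi)\rho$. Strongness supplies some $M \in \Mod(\varphi)M'$ with $M \models E$; since $\rho \in E^\bullet$ means $E \models \rho$, we obtain $M \models \rho$, and the Satisfaction Condition \eqref{sat-cond-eq} then transfers this to $M' \models \Sen(\varphi)\rho$. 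As $M'$ was arbitrary, $E' \models \Sen(\varphi)\rho$, i.e.\ $\Sen(\varphi)\rho \in E'^\bullet$, which is the weak-morphism condition.

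For \emph{ultra-strong $\Rightarrow$ strong} under the no-emptiness hypothesis, I would fix a $\Sigma'$-model $M'$ with $M' \models E'$. Because $\Mod$ does not admit emptiness, $\Mod(\varphi)M'$ is nonempty, so I may pick some $M \in \Mod(\varphi)M'$; ultra-strongness then forces $M \models E$, which is precisely the witness required by the strong-morphism condition. The only step that is not a pure formality is this last one: without the no-emptiness assumption the ultra-strong condition can hold vacuously (when $\Mod(\varphi)M' = \emptyset$) while no witnessing $M$ exists, so that is exactly where the hypothesis is indispensable and constitutes the sole genuine subtlety of the Fact.
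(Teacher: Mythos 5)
Your proof is correct. The paper states this result as a \emph{Fact} with no accompanying proof, and your argument supplies exactly the routine definition-unfolding it takes for granted: $E \subseteq E^\bullet$ plus monotonicity of the image under the partial function $\Sen(\varphi)$ for weak $\Rightarrow$ pseudo; the strong-morphism witness $M \in \Mod(\varphi)M'$ combined with the Satisfaction Condition (legitimately applicable since elements of $\Sen(\varphi)E^\bullet$ come from $E^\bullet \cap \DOM(\Sen(\varphi))$) for strong $\Rightarrow$ weak; and non-emptiness of $\Mod(\varphi)M'$ to produce a witness for ultra-strong $\Rightarrow$ strong, where you also correctly pinpoint that this hypothesis is exactly what rules out the vacuous case.
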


In 1-institution theory the four concepts of theory morphisms of
Dfn.~\ref{th-morphism-dfn} collapse to the single established
1-institution concept of theory morphism 
(cf. \cite{Goguen-Burstall:Institutions-1992,iimt}, etc.).  
But in the realm of $\3/2$-institutions they are in general different
concepts as shown by the following very simple counterexamples: 
\begin{itemize}

\item In $\3/2 \PL$ consider $\Sigma = \{p,q \}$, $\Sigma' = \{ q \}$, 
$E = \{ p \wedge q \}$, $E' = \emptyset$.
Then $\varphi$, the maximal partial inclusion of $\Sigma$ into
$\Sigma'$ ($\DOM \varphi = \{ q \}$), is a pseudo-morphism 
$(\Sigma,E)\to (\Sigma',E')$ but it is not a weak one since 
$q \in \Sen(\varphi)E^\bullet \setminus E'^\bullet$. 

\item 
  In the quantifier-free variant of $\3/2 \MSA$ (which means sentences
  without quantifiers) consider $\Sigma$ consisting of one sort symbol
  $s$ and two constants $c$, $c'$, $\Sigma'$ consisting only of the
  sort symbol $s$ and a constant $c'$, $E = \{ \neg (c = c') \}$, and
  $E'=\emptyset$. 
  Then $\varphi$, the maximal partial inclusion of $\Sigma$ into
  $\Sigma'$, is a (trivially) weak morphism $(\Sigma,E)\to
  (\Sigma',E')$ but it is not a strong one since any singleton set
  does not admit a $\varphi$-reduct that satisfies
  $E'$.\footnote{Counterexample communicated by Daniel G\u{a}in\u{a}.}  

\item In $\3/2 \PL$ consider $\Sigma = \{p,q \}$, $\Sigma' = \{ q \}$, 
$E = \{ p \wedge q \}$, $E' = \{ q \}$.
Then $\varphi$, the maximal partial inclusion of $\Sigma$ into
$\Sigma'$ ($\DOM \varphi = \{ q \}$), is a strong morphism  
$(\Sigma,E)\to (\Sigma',E')$ but it is not an ultra-strong one. 
There exists only one model $M' \models E'$, namely $M'(q) = 1$. 
Then $M'$ has a $\varphi$-reduct $M$ such that $M \models E$ defined
by $M(p) = M(q) = 1$. 
However \emph{not any} $\varphi$-reduct of $M'$ enjoys this property,
for example $N$ such that $N(p)=0$ and $N(q) =1$. 

\end{itemize} 

In general pseudo-morphisms and do not compose and the ultra-strong
ones compose under the condition that $\Mod$ is strict rather than
(properly) lax. 
The strictness condition on $\Mod$ is a very heavy and unrealistic one
in the applications (actually unlike the strictness condition on
$\Sen$ which holds in a lot of $\3/2$-institutions of interest).
This makes both extremes, the pseudo-morphisms and the ultra-strong
morphisms, unsuitable as a $\3/2$-institutional replacement for the
1-institution theory morphisms and leaves us only with the middle
options. 
But it is not only the failure in compositionality that makes them
unsuitable, their very nature also feel inadequate as can be for
example seen by inspecting the very simple examples above. 
Pseudo-morphisms are too weak and the ultra-strong morphisms seem to 
require too much. 
The strong theory morphisms compose unconditionally, while the weak
ones compose under a certain condition that holds often in concrete
situations.  

\begin{proposition}
In any $\3/2$-institution $\I$, by inheriting the $\3/2$-categorical 
structure of $\Sign^\I$ 
\begin{itemize}

\item strong morphisms of theories yield a $\3/2$-category -- denoted
  $\Th_s^\I$, and 

\item when $\Sen$ is oplax, the weak theory morphisms yield a
  $\3/2$-category -- denoted $\Th_w^\I$. 

\end{itemize} 
\end{proposition}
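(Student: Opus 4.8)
The plan is to realise each of $\Th_s^\I$ and $\Th_w^\I$ as a (1-)subcategory of $\Sign^\I$ whose objects are the theories, carrying the hom-orders obtained by restriction from $\Sign^\I$. The restriction of a partial order to a subset is again a partial order, and composition is already monotone in $\Sign^\I$; consequently the ordered structure on the hom-sets and its preservation under composition, as well as associativity and the identity laws, are all inherited for free once we know we are dealing with a subcategory. Hence the only substantive points to verify are that the identity signature morphisms qualify as morphisms of the relevant kind, and that the relevant kind is closed under the composition of $\Sign^\I$.

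For the strong morphisms I would argue purely model-theoretically, using the lax structure of $\Mod$ and no hypothesis on $\Sen$. For the identity $1_\Sigma \co (\Sigma,E) \to (\Sigma,E)$, given any $\Sigma$-model $M'$ with $M' \models E$, the identity clause of the lax functoriality of $\Mod$ gives $M' \in \Mod(1_\Sigma)M'$, so $M'$ itself witnesses strongness. For composition, let $\varphi \co (\Sigma,E) \to (\Sigma',E')$ and $\varphi' \co (\Sigma',E') \to (\Sigma'',E'')$ be strong and take a $\Sigma''$-model $M''$ with $M'' \models E''$. Strongness of $\varphi'$ yields $M' \in \Mod(\varphi')M''$ with $M' \models E'$, and then strongness of $\varphi$ yields $M \in \Mod(\varphi)M'$ with $M \models E$. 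Since $M' \in \Mod(\varphi')M''$ we have $M \in \Mod(\varphi)M' \subseteq \Mod(\varphi)(\Mod(\varphi')M'')$, and the lax inequality $\Mod(\varphi)(\Mod(\varphi')M'') \subseteq \Mod(\varphi;\varphi')M''$ places $M$ in $\Mod(\varphi;\varphi')M''$; as $M \models E$, the composite $\varphi;\varphi'$ is strong.

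For the weak morphisms, under the assumption that $\Sen$ is oplax, I would argue syntactically with the semantic-closure sets. The oplax identity clause gives $\Sen(1_\Sigma) \subseteq 1_{\Sen(\Sigma)}$ in $\Setp$, whence $\Sen(1_\Sigma)E^\bullet \subseteq E^\bullet$, so identities are weak. For composition let $\varphi,\varphi'$ be weak as above; I must show $\Sen(\varphi;\varphi')E^\bullet \subseteq E''^\bullet$. The oplax inequality gives the graph-inclusion $\Sen(\varphi;\varphi') \subseteq \Sen(\varphi);\Sen(\varphi')$, so by monotonicity of direct images and Lemma~\ref{pfun-lem} we get $\Sen(\varphi;\varphi')E^\bullet \subseteq (\Sen(\varphi);\Sen(\varphi'))E^\bullet = \Sen(\varphi')(\Sen(\varphi)E^\bullet) \subseteq \Sen(\varphi')E'^\bullet \subseteq E''^\bullet$, where the last two inclusions use monotonicity of the image under $\Sen(\varphi')$ together with weakness of $\varphi$ and then of $\varphi'$.

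The main obstacle, and the sole place where oplaxity is genuinely needed, is this final containment for weak morphisms: to bound $\Sen(\varphi;\varphi')E^\bullet$ from above one needs $\Sen(\varphi;\varphi')$ to sit inside the composite $\Sen(\varphi);\Sen(\varphi')$, which is exactly what oplaxity supplies; a merely lax $\Sen$ gives the reverse containment and the argument collapses. The care required in invoking Lemma~\ref{pfun-lem}, rather than the familiar identity for total functions, reflects that these sentence translations are only partial functions, so the interplay of composition and direct images is not entirely routine.
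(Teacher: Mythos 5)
Your proposal is correct and follows essentially the same route as the paper: the strong case via the lax composition inequality for $\Mod$, and the weak case via the oplax inequality for $\Sen$ combined with Lemma~\ref{pfun-lem}, in exactly the same chain of inclusions. The only difference is that you spell out the identity checks and the inheritance of the ordered structure, which the paper dismisses as straightforward.
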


\begin{proof}
The proof is based on the fact that the composition of theory
morphisms yields a theory morphism; the rest being straightforward. 
Let us consider theory morphisms $\varphi \co (\Sigma,E) \to
(\Sigma',E')$ and $\varphi' \co (\Sigma',E') \to (\Sigma'',E'')$.
 
For the `strong' case we consider $M'' \in |\Mod(\Sigma'')|$ such that
$M''\models E''$.
Then there exists $M' \in \Mod(\varphi')M''$ such that $M' \models
E'$.
It follows that there exists $M\in \Mod(\varphi)M'$ such that 
$M \models E$. 
Then by the lax property of $\Mod$ it follows that $M \in
\Mod(\varphi;\varphi')$. 

For the `weak' case we have:
$$\begin{array}{rll}
\Sen(\varphi;\varphi') E^\bullet \subseteq & 
  (\Sen(\varphi);\Sen(\varphi')) E^\bullet & 
  \quad \text{by the oplax functoriality of }\Sen \\[.2em]
  = & \Sen(\varphi') (\Sen(\varphi) E^\bullet) & \quad \text{by Lemma
                                                 \ref{pfun-lem}}  \\[.2em]
  \subseteq  & \Sen(\varphi') E'^\bullet & \quad \text{since }\varphi
                                           \text{ is theory morphism}  \\[.2em]
  \subseteq  & E''^\bullet & \quad \text{since }\varphi'
                                           \text{ is theory morphism}. 
\end{array}$$
\end{proof}

From now on whenever we encounter weak theory morphisms we tacitly
assume that $\Sen$ is oplax. 
\vsp

The constructions in the Corollaries \ref{th-institution-cor1} and
\ref{th-institution-cor2} constitute natural examples of
$\3/2$-institutions that are not based on an explicit form of
partiality of signature morphisms. 

\begin{corollary}\label{th-institution-cor1}
For any $\3/2$-institution $\I = (\Sign,\Sen,\Mod,\models)$ its
$\3/2$-category of weak/strong theory morphisms determines a
$\3/2$-institution $\I_w / \I_s$ as follows ($i$ is $w$ or $s$):   
\begin{itemize}

\item the $\3/2$-category of signatures $\Sign_i$ is $\Th_i^\I$,  

\item $\Sen_i$ is a trivial lifting of $\Sen$ to theories,
  i.e. $\Sen_i (\Sigma,E) = \Sen_i (\Sigma)$, etc., 

\item $\Mod_i (\Sigma,E)$ is the full subcategory of $\Mod(\Sigma)$
  of the $\Sigma$-models satisfying $E$, and for each theory morphism
  $\varphi \co (\Sigma,E) \to (\Sigma',E')$ and each
  $(\Sigma',E')$-model $M'$
\[
\Mod_i (\varphi)M' = \{ M \in \Mod(\varphi)M' \mid M \models E \}
\]

\item and the satisfaction relation is inherited from $\I$. 

\end{itemize}
\end{corollary}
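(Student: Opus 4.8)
The plan is to verify the four clauses of Definition~\ref{ins-dfn} for $\I_i$ (with $i \in \{w,s\}$), exploiting that every component is obtained by restriction or lifting from $\I$, so that the argument is largely one of transport of structure. That $\Sign_i = \Th_i^\I$ is a $\3/2$-category is precisely the content of the preceding proposition; its hom-orders are those inherited from $\Sign^\I$ along the forgetful assignment $(\Sigma,E)\mapsto\Sigma$, $\varphi\mapsto\varphi$, which is a strict $\3/2$-functor since it is the identity on underlying signature morphisms. Consequently $\Sen_i$ factors as this forgetful functor followed by $\Sen$, and is therefore a $\3/2$-functor of the same lax/oplax character as $\Sen$; in particular $\DOM(\Sen_i(\varphi))=\DOM(\Sen(\varphi))$ and $\Sen_i(\varphi)\rho=\Sen(\varphi)\rho$.

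The single genuine verification concerns $\Mod_i$ as a lax $\3/2$-functor $(\Th_i^\I)^{\varoplus}\to\3/2(\CAT_{\!\!\P})$. Well-definedness is immediate, since by construction every member of $\Mod_i(\varphi)M'$ satisfies $E$ and hence lies in $\Mod_i(\dom{\varphi})$. For order preservation, $\varphi\leq\theta$ in $\Th_i^\I$ gives $\varphi\leq\theta$ in $\Sign^\I$, hence $\Mod(\theta)M'\subseteq\Mod(\varphi)M'$; restricting both sides to the models satisfying $E$ yields $\Mod_i(\theta)M'\subseteq\Mod_i(\varphi)M'$. For lax functoriality on composites I would take $M\in\Mod_i(\varphi)(\Mod_i(\varphi')M'')$, unfold it to an $M'\in\Mod(\varphi')M''$ with $M'\models E'$ and an $M\in\Mod(\varphi)M'$ with $M\models E$, and then appeal to the lax functoriality of $\Mod$ in $\I$ to obtain $M\in\Mod(\varphi)(\Mod(\varphi')M'')\subseteq\Mod(\varphi;\varphi')M''$; since additionally $M\models E$, this places $M$ in $\Mod_i(\varphi;\varphi')M''$. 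The identity law is checked identically from $M\in\Mod(1_\Sigma)M$ together with $M\models E$, and the lax behaviour on model homomorphisms transports from that of $\Mod$ in the same routine manner, which I would only indicate.

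Finally, the Satisfaction Condition for $\I_i$ is inherited verbatim from $\I$: for a theory morphism $\varphi\co(\Sigma,E)\to(\Sigma',E')$, a model $M'\in\Mod_i(\Sigma',E')$, some $M\in\Mod_i(\varphi)M'$ and $\rho\in\DOM(\Sen_i(\varphi))=\DOM(\Sen(\varphi))$, we have $M\in\Mod(\varphi)M'$, so \eqref{sat-cond-eq} for $\I$ gives $M'\models\Sen(\varphi)\rho$ iff $M\models\rho$; as $\Sen_i$ and $\models_i$ are exactly the restrictions of $\Sen$ and $\models$, this is the Satisfaction Condition for $\I_i$. The step demanding the most care is the lax-functoriality computation for $\Mod_i$, precisely because one must confirm that the extra constraint $\models E$ threads cleanly through the Kleisli composition rather than obstructing it; everything else is direct transport from $\I$, and the distinction between $w$ and $s$ enters only through the preceding proposition, which guarantees that $\Th_i^\I$ is closed under composition.
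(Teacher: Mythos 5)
Your proof is correct and takes essentially the same route as the paper's: both identify the lax functoriality of $\Mod_i$ on composites as the only substantive point and establish it by unfolding the definition, discarding the constraint $M' \models E'$, and invoking the laxity of $\Mod$ while retaining $M \models E$ --- your element-wise phrasing is just the paper's chain of set inclusions read pointwise. The remaining clauses (order preservation, identities, the Satisfaction Condition), which you verify explicitly, are exactly what the paper dismisses as ``straightforward.''
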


\begin{proof}
The only interesting part of the proof is the lax functoriality of
$\Mod_{i}$, the rest being straightforward. 
We consider $\varphi\co (\Sigma,E) \to (\Sigma',E')$ and $\varphi'\co
(\Sigma',E') \to (\Sigma'',E'')$ theory morphisms.  
For any $(\Sigma'',E'')$-model $M''$ we have that 
\[
\begin{array}{rll}
\Mod_i (\varphi)(\Mod_i(\varphi')M'') = & 
\Mod_i (\varphi)\{ M' \in \Mod(\varphi')M'' \mid M' \models E' \} 
 & \text{definition of }\Mod_i\\[.2em]
= & \{ M \in \Mod(\varphi)M' \mid M' \in \Mod(\varphi')M'', M
    \models E, M' \models E' \}  
& \text{definition of }\Mod_i\\[.2em]
\subseteq & \{ M \in \Mod(\varphi)M' \mid M' \in \Mod(\varphi')M'', \ M
    \models E \} 
 & \\[.2em]
= & \{ M \in \Mod(\varphi)(\Mod(\varphi')M'') \mid  M \models E \} & \\[.2em]
\subseteq & \{ M \in \Mod(\varphi;\varphi')M'' \mid  M \models E \} &
\text{since }\Mod\text{ is lax}\\[.2em]
= & \Mod_i (\varphi;\varphi')M'' & \text{definition of }\Mod_i. 
\end{array}
\]
\end{proof}

$\I_w / \I_s$ generalise the concept of the ``institution of
theories'' from 1-institution theory \cite{iimt} to
$\3/2$-institutions.  
Note that both of them constitute examples of $\3/2$-institutions
where the model functor may naturally admit emptiness, and
this without being inherited from the base institution.
\vsp 

There is also an alternative way to complete the definition of  
$\Th^{\I}_w / \Th^{\I}_s$ to that of a $\3/2$-institution by shifting
the weight of the construction from the models side to the sentences 
side. 
However this construction is conditioned by $\I$ being a 
\emph{lax $\3/2$-institution}. 

\begin{corollary}\label{th-institution-cor2}
For any lax $\3/2$-institution $\I= (\Sign,\Sen,\Mod,\models)$ its
$\3/2$-category of weak/strong theory morphisms determines a lax
$\3/2$-institution $\I_{w'} / \I_{s'}$ as follows ($i$ is $w$ or $s$):    
\begin{itemize}

\item the $\3/2$-category of signatures $\Sign_{i'}$ is $\Th_i^\I$,  

\item $\Sen_{i'}(\Sigma,E) = E^\bullet$ and for each theory morphism
  $\varphi \co (\Sigma,E) \to (\Sigma',E')$ we let 
\begin{itemize}

\item $\DOM \ \Sen_{i'}(\varphi) = E^\bullet \cap \DOM \
  \Sen(\varphi)$, and 

\item $\Sen_{i'}(\varphi)\rho = \Sen(\varphi)\rho$ for all 
$\rho\in \DOM \ \Sen_{i'}(\varphi)$.

\end{itemize}

\item $\Mod_{i'}$ is the trivial lifting of $\Mod$, i.e. 
$\Mod_{i'} (\Sigma,E) = \Mod(\Sigma)$, etc., 

\item and the satisfaction relation is inherited from $\I$. 

\end{itemize}
\end{corollary}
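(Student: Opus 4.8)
The plan is to verify the four data of Definition~\ref{ins-dfn} for $\I_{i'}$, where $i$ is $w$ or $s$. The signature $\3/2$-category $\Sign_{i'} = \Th_i^\I$ is already supplied by the preceding proposition: for $i=s$ this holds for any $\3/2$-institution, and for $i=w$ it holds because the tacit convention supplies oplaxity of $\Sen$, which together with the standing hypothesis that $\I$ is lax makes $\Sen$ strict. Since $\Mod_{i'}$ and $\models_{i'}$ are literal liftings of the data of $\I$, the genuine work concentrates on the sentence functor $\Sen_{i'}$, the remaining components following by transfer from $\I$.

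First I would check that $\Sen_{i'}$ is a well-defined partial function $E^\bullet \pto E'^\bullet$ on each theory morphism $\varphi \co (\Sigma,E)\to(\Sigma',E')$. By the Fact both weak and strong theory morphisms are weak, so $\Sen(\varphi)E^\bullet \subseteq E'^\bullet$; hence for $\rho \in \DOM(\Sen_{i'}(\varphi)) = E^\bullet \cap \DOM(\Sen(\varphi))$ the value $\Sen_{i'}(\varphi)\rho = \Sen(\varphi)\rho$ indeed lands in $E'^\bullet$. Order-preservation is then immediate: if $\varphi \leq \theta$ in $\Th_i^\I$ then $\Sen(\varphi)\subseteq\Sen(\theta)$ since $\Sen$ is a $\3/2$-functor, and intersecting the definition domains with the fixed set $E^\bullet$ preserves this inclusion, so $\Sen_{i'}(\varphi)\subseteq\Sen_{i'}(\theta)$.

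The main step, and the only place where the laxity hypothesis on $\I$ is really used, is the lax functoriality of $\Sen_{i'}$. Given composable theory morphisms $\varphi,\varphi'$, I would compute the definition domain of $\Sen_{i'}(\varphi);\Sen_{i'}(\varphi')$ and show it is contained in that of $\Sen_{i'}(\varphi;\varphi')$, the two maps agreeing there. Concretely, if $\rho$ lies in the former domain then $\rho \in E^\bullet$, $\rho \in \DOM(\Sen(\varphi))$ and $\Sen(\varphi)\rho \in \DOM(\Sen(\varphi'))$, i.e.\ $\rho \in \DOM(\Sen(\varphi);\Sen(\varphi'))$; the laxity of $\Sen$, namely $\Sen(\varphi);\Sen(\varphi')\subseteq\Sen(\varphi;\varphi')$, then places $\rho$ in $\DOM(\Sen(\varphi;\varphi'))$ and forces $\Sen(\varphi;\varphi')\rho = \Sen(\varphi')(\Sen(\varphi)\rho)$, which is exactly the composite value. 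Since $\Sen$ is lax in both cases (strict in the weak case), this covers $i=w$ and $i=s$ uniformly. Preservation of identities is no obstacle: laxity of $\Sen$ gives $1_{\Sen(\Sigma)}\subseteq\Sen(1_\Sigma)$, and a partial function containing the identity must equal it, so $\Sen(1_\Sigma) = 1_{\Sen(\Sigma)}$ and hence $\Sen_{i'}(1_{(\Sigma,E)}) = 1_{E^\bullet}$.

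Finally I would dispatch the model functor and the Satisfaction Condition. Since $\Mod_{i'}$ is precomposition of $\Mod$ with the strict forgetful $\3/2$-functor $\Th_i^\I \to \Sign^\I$ sending $(\Sigma,E)\mapsto\Sigma$ and $\varphi\mapsto\varphi$, its lax functoriality is inherited directly from that of $\Mod$ (a strict functor followed by a lax one is lax). For the Satisfaction Condition, fix $\varphi$, $M'\in\Mod_{i'}(\Sigma',E') = \Mod(\Sigma')$, $M\in\Mod_{i'}(\varphi)M' = \Mod(\varphi)M'$ and $\rho\in\DOM(\Sen_{i'}(\varphi))\subseteq\DOM(\Sen(\varphi))$; since $\Sen_{i'}(\varphi)\rho = \Sen(\varphi)\rho$ and the models, reducts and satisfaction all coincide with those of $\I$, the required equivalence is precisely the Satisfaction Condition of $\I$. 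The one point demanding care throughout is the bookkeeping of definition domains, namely showing that the restriction to $E^\bullet$ interacts correctly with the lax composition of the partial sentence translations; this is where I expect the argument to need the most attention.
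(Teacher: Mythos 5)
Your proposal is correct and follows essentially the same route as the paper: the paper also isolates the lax functoriality of $\Sen_{i'}$ as the only substantive point, establishing the domain inclusion $\DOM(\Sen_{i'}(\varphi);\Sen_{i'}(\varphi')) \subseteq \DOM(\Sen_{i'}(\varphi;\varphi'))$ and the agreement of values via the laxity of $\Sen$, exactly as you do (the paper computes domains set-algebraically with preimages where you argue element-wise, an immaterial difference). Your additional checks --- that values land in $E'^\bullet$ via the Fact that strong morphisms are weak, order preservation, identities, and the transfer of $\Mod_{i'}$ and the Satisfaction Condition --- are precisely the parts the paper dismisses as ``straightforward,'' and they are carried out correctly.
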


\begin{proof}
The only interesting part of the proof is the lax functoriality of
$\Sen_{i'}$, the rest being straightforward. 
We consider $\varphi\co (\Sigma,E) \to (\Sigma',E')$ and $\varphi'\co
(\Sigma',E') \to (\Sigma'',E'')$ theory morphisms. 
On the one hand we have that 
\[
\begin{array}{rll}
\DOM \ \Sen_{i'}(\varphi);\Sen_{i'}(\varphi') = &
\DOM \ \Sen_{i'}(\varphi) \cap \Sen_{i'}^{-1}(\varphi)(\DOM \
                                                  \Sen_{i'}(\varphi'))
  & \\[.2em]
= &
E^\bullet \cap \DOM \ \Sen(\varphi) \cap \Sen(\varphi)^{-1}(E'^\bullet
\cap \DOM \ \Sen(\varphi')) & \text{definition of }\Sen_{i'}\\[.2em]
= & E^\bullet \cap \DOM \ \Sen(\varphi) \cap \Sen(\varphi)^{-1}(E'^\bullet)
\cap \Sen(\varphi)^{-1}(\DOM \ \Sen(\varphi')) & \\[.2em]
= &  E^\bullet \cap \DOM \ (\Sen(\varphi);\Sen(\varphi')) \cap
    \Sen(\varphi)^{-1}(E'^\bullet) & \\[.2em]
\subseteq & E^\bullet \cap \DOM \ \Sen(\varphi;\varphi') \cap
    \Sen(\varphi)^{-1}(E'^\bullet) & \Sen \text{ is lax}\\[.2em]
\subseteq  & E^\bullet \cap \DOM \ \Sen(\varphi;\varphi') \\[.2em]
= & \DOM \ \Sen_{i'}(\varphi;\varphi') & \text{definition of }\Sen_{i'}. 
\end{array}
\]
On the other hand for each 
$\rho \in \DOM \ \Sen_{i'}(\varphi);\Sen_{i'}(\varphi')$, 
\[
\Sen_{i'}(\varphi')(\Sen_{i'}(\varphi)\rho) = 
\Sen(\varphi')(\Sen(\varphi)\rho)  =
\Sen(\varphi;\varphi')\rho =
\Sen_{i'}(\varphi;\varphi')\rho. 
\] 
\end{proof}

One of the starting motivations in 1-institution theory was the
development of a general logic-independent method for the aggregation
of software modules, modelled as institutional theories
\cite{Goguen-Burstall:Institutions-1992}. 
The process of ``putting together'' -- just to use a favourite phrase
of Goguen and Burstall -- institutional theories relies on
colimits in the category of theory morphisms, an important result
being the automatic lifting of colimits from the category of signature
morphisms to that of theory morphisms (see
\cite{Goguen-Burstall:Institutions-1992,iimt,sannella-tarlecki-book}). 
The following results replicate this in the context of
$\3/2$-institutions in support of conceptual blending theory.  
The more complicated situation of colimits and theory morphisms
in $\3/2$-institutions leads to a significantly more complex situation
with respect to the lifting of colimits from signatures to theories. 

\begin{proposition}[Lifting lax cocones from signatures to
  theories]\label{lift-prop} 
Consider a span of weak/strong theory morphisms 
$\varphi_k \co (\Sigma_0,E_0) \to (\Sigma_k, E_k)$, $k=1,2$, and a lax
cocone for the underlying span of signature morphisms like shown in
the following diagram.  
\begin{equation}\label{lax-cocone-signature-equation}
\xymatrix @C-2em{
 & & \Sigma & & \\
\Sigma_1 \ar@{.>}[urr]^{\theta_1} & { \ \ \ \ \leq} & &
{ \geq \ \ \ \ } &
\Sigma_2 \ar@{.>}[ull]_{\theta_2}\\
 &  & \Sigma_0 \ar[ull]^{\varphi_1} \ar[urr]_{\varphi_2} \ar@{.>}[uu]_{\theta_0}& &
}
\end{equation}
Then for any $E\subseteq \Sen(\Sigma)$ such
that $\bigcup_{k=0,1,2} \Sen(\gamma_k) E_k^\bullet \subseteq E^\bullet$
the following diagram displays a lax cocone of
theory morphisms
\begin{equation}\label{lax-cocone-th-equation}
\xymatrix @C-3em{
 & & (\Sigma,E) & & \\
(\Sigma_1,E_1) \ar@{.>}[urr]^{\theta_1} & { \ \ \ \ \ \leq} & &
{ \geq \ \ \ \ \ } &
(\Sigma_2,E_2) \ar@{.>}[ull]_{\theta_2}\\
 &  & (\Sigma_0,E_0) \ar[ull]^{\varphi_1} \ar[urr]_{\varphi_2} \ar@{.>}[uu]_{\theta_0}& &
}
\end{equation}
where
\begin{enumerate}

\item in the `weak' case, $\gamma_k = \theta_k, k=0,1,2$, and

\item in the `strong' case, $\gamma_k, k=0,1,2$ are any signature
  morphisms such that $\theta_k \leq \gamma_k$ and $\Sen(\gamma_k)$
  are total functions. 

\end{enumerate}
\end{proposition}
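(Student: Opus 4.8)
The plan is to notice that the two lax-cocone inequalities $\varphi_1;\theta_1 \leq \theta_0$ and $\varphi_2;\theta_2 \leq \theta_0$ are inequalities between signature morphisms, hence they transfer verbatim to $\Th_w^\I$ / $\Th_s^\I$, whose $\3/2$-categorical structure is inherited from $\Sign^\I$. Since $\varphi_1,\varphi_2$ are theory morphisms by hypothesis, the entire content of the statement reduces to proving that each of $\theta_0,\theta_1,\theta_2$ is a (weak, resp.\ strong) theory morphism $(\Sigma_k,E_k)\to(\Sigma,E)$; once this is done, diagram \eqref{lax-cocone-th-equation} is a lax cocone in $\Th_w^\I$ / $\Th_s^\I$ for free.

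For the weak case I would simply unfold the hypothesis. With $\gamma_k=\theta_k$ the assumption $\bigcup_{k=0,1,2}\Sen(\gamma_k)E_k^\bullet \subseteq E^\bullet$ reads $\Sen(\theta_k)E_k^\bullet \subseteq E^\bullet$ for each $k$, which is exactly the defining condition of $\theta_k$ being a weak theory morphism. No further work is needed.

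For the strong case I would fix a model $M\models E$ and, for each $k$, produce a $\theta_k$-reduct of $M$ satisfying $E_k$. The point of passing to $\gamma_k$ is to repair the partiality of $\Sen(\theta_k)$: since $\Sen(\gamma_k)$ is total, every $\rho\in E_k$ lies in $\DOM\Sen(\gamma_k)$, so the Satisfaction Condition for $\gamma_k$ is available on all of $E_k$. Concretely I would pick any $M_k\in\Mod(\gamma_k)M$; because $\theta_k\leq\gamma_k$ and $\Mod$ reverses the order on arrows we have $\Mod(\gamma_k)M\subseteq\Mod(\theta_k)M$, so $M_k$ is in particular a $\theta_k$-reduct of $M$. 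To check $M_k\models E_k$, take $\rho\in E_k\subseteq E_k^\bullet$; the hypothesis gives $\Sen(\gamma_k)\rho\in E^\bullet$, and since $M\models E$ it satisfies every consequence in $E^\bullet$, so $M\models\Sen(\gamma_k)\rho$; the Satisfaction Condition applied to $\gamma_k$ then yields $M_k\models\rho$. Applying this uniformly for $k=0,1,2$ (using $\gamma_0$ for the middle arrow) disposes of all three top morphisms at once.

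The step I expect to be the real obstacle is the existence part, namely that $\Mod(\gamma_k)M$ is nonempty, since strongness requires producing an actual reduct and the satisfaction computation above only shows that every member of $\Mod(\gamma_k)M$ already models $E_k$. Totality of $\Sen(\gamma_k)$ controls satisfaction but says nothing about existence of reducts, so this is where the structural input is needed: it is automatic whenever $\Mod$ does not admit emptiness (the situation in all the concrete examples), and in a $\3/2$-institutional seed it is even more transparent, as $\gamma_k;M$ is itself a model lying in $\Mod(\gamma_k)M$ once $\Sen(\gamma_k)$ and $\Sen(M)$ are total. I would therefore isolate nonemptiness as the only nontrivial hypothesis of the strong case and verify it in that form.
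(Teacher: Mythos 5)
Your proposal follows the paper's own proof essentially step for step: reduce everything to showing that $\theta_0,\theta_1,\theta_2$ are theory morphisms into $(\Sigma,E)$ (the cocone inequalities transfer because $\Th_w^\I$/$\Th_s^\I$ inherit the $\3/2$-structure of $\Sign$); dispose of the weak case by noting that with $\gamma_k=\theta_k$ the hypothesis \emph{is} the definition of a weak theory morphism; and in the strong case take $M_k\in\Mod(\gamma_k)M$, use totality of $\Sen(\gamma_k)$ together with the Satisfaction Condition to get $M_k\models E_k$, then use $\theta_k\leq\gamma_k$ and the monotonicity of $\Mod$ to conclude $M_k\in\Mod(\theta_k)M$.

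The one place you go beyond the paper is your last paragraph, and the concern you raise there is genuine: the paper's proof reads ``for any $M_k \in \Mod(\gamma_k)M$, $M_k \models E_k$'' and never exhibits such an $M_k$, whereas a strong theory morphism requires the \emph{existence} of a reduct satisfying $E_k$. This is not an over-scruple. One can build a tiny $\3/2$-institution witnessing failure: two signatures $A,B$ with a single non-identity morphism $f \co A \to B$, one sentence $a$ over $A$ and one sentence $b$ over $B$ with $\Sen(f)a=b$ total, one model on each side with the $A$-model failing $a$ and the $B$-model $M$ satisfying $b$, and $\Mod(f)M=\emptyset$ (lax functoriality and the Satisfaction Condition hold vacuously). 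Taking $E_0=E_1=E_2=\{a\}$ (inconsistent, so $E_k^\bullet=\Sen(A)$), $E=\{b\}$, the span of identities and the cocone $\theta_0=\theta_1=\theta_2=\gamma_k=f$, all hypotheses of the `strong' case are satisfied, yet $f$ is not a strong theory morphism $(A,\{a\})\to(B,\{b\})$. So some nonemptiness input is necessary, exactly as you propose --- either the blanket assumption that $\Mod$ does not admit emptiness (which the paper itself does impose in the subsequent corollary on lifting lax $\T$-pushouts, but omits here), or, in the seed-generated setting, the observation that $\gamma_k;M$ is itself a member of $\Mod(\gamma_k)M$ once $\Sen(\gamma_k)$ and $\Sen(M)$ are total. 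Your formulation, with nonemptiness isolated as an explicit hypothesis of the strong case, is the correct repair of the statement.
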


\begin{proof}
We have to only to show that $\theta_k \co (\Sigma_k,E_k) \to
(\Sigma,E)$, $k=0,1,2$ are theory morphisms.  
The `weak' case is straightforward.
For the `strong' case we consider any $M \in |\Mod(\Sigma)|$ such that
$M \models E$. 
Because $\Sen(\gamma_k)$ are total, by the Satisfaction Condition it
follows that for any $M_k \in \Mod(\gamma_k)M$, $M_k \models E_k$.
Since $\theta_k \leq \gamma_k$ by the monotonicity of $\Mod$ it
follows that  $M_k \in \Mod(\theta_k)M$. 
Hence $\theta_k$ are strong theory morphisms.
\end{proof}

\begin{corollary}[Lifting lax pushouts from signature to theories]
In the context of Prop.~\ref{lift-prop}, given a 1-subcategory 
$\T \subseteq \Sign$ let $\T_w$/$T_s$ denotes the class of weak theory
morphisms $\varphi$ such that $\varphi\in \T$. 
We further assume that
\begin{itemize}

\item $\Mod$ does not admit emptiness,

\item the lax cocone of signature morphisms is a lax $\T$-pushout,

\item $E^\bullet = 
(\bigcup_{k=0,1,2} \Sen(\gamma_k) E_k^\bullet)^\bullet$. 

\end{itemize}
Then the lax cocone of theory morphisms obtained by Prop.~\ref{lift-prop}  
\begin{itemize}

\item[--] is a lax $\T_w$-pushout when $\Sen$ is lax (therefore it is
  strict) and each signature morphism in $\T$ is $\Sen$-maximal, 

\item[--] is a lax $\T_s$-pushout when each signature morphism in $\T$ is
  $\Mod$-maximal.  

\end{itemize}
\end{corollary}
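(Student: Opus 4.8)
The plan is to reduce everything to the signature level and then check only that the mediator is a theory morphism of the right flavour. The lifted diagram \eqref{lax-cocone-th-equation} is already a lax cocone of theory morphisms by Prop.~\ref{lift-prop}, and its legs $\theta_0,\theta_1,\theta_2$ lie in $\T$ because they constitute the given lax $\T$-pushout of signatures; being weak (resp.\ strong) theory morphisms they lie in $\T_w$ (resp.\ $\T_s$), so the lifted cocone is a lax $\T_w$- (resp.\ $\T_s$-)cocone. For the universal property I would take an arbitrary competing lax $\T_w$- (resp.\ $\T_s$-)cocone $(\theta'_0,\theta'_1,\theta'_2)$ with apex $(\Sigma',E')$, forget the theories, and note that the underlying signature morphisms form a lax $\T$-cocone of $(\varphi_1,\varphi_2)$, since the order on theory morphisms is inherited from $\Sign$. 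The signature-level lax $\T$-pushout then yields a unique $\mu\in\T$ with $\theta_k;\mu=\theta'_k$, $k=0,1,2$. Both the factorisation equalities and uniqueness transfer verbatim to theories, a theory morphism being a signature morphism with an extra property and composition being computed on signatures. Thus the whole statement collapses to the claim that $\mu$ is itself a weak (resp.\ strong) theory morphism $(\Sigma,E)\to(\Sigma',E')$.

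For the weak case, $\mu\in\T$ gives $\Sen(\mu)$ total and $\Sen$ is strict; recall $\gamma_k=\theta_k$ here. On the generators I would compute, using Lemma~\ref{pfun-lem} and strictness, $\Sen(\mu)(\Sen(\theta_k)E_k^\bullet)=\Sen(\theta_k;\mu)E_k^\bullet=\Sen(\theta'_k)E_k^\bullet\subseteq E'^\bullet$, the last step because $\theta'_k$ is a weak theory morphism; hence $\Sen(\mu)$ maps $\Phi:=\bigcup_k\Sen(\theta_k)E_k^\bullet$ into $E'^\bullet$. To pass from $\Phi$ to its closure $E^\bullet=\Phi^\bullet$ I would invoke the standard entailment-preservation argument: if $\Phi\models\rho$ then $\Sen(\mu)\Phi\models\Sen(\mu)\rho$, proved by taking $M'\models\Sen(\mu)\Phi$, choosing a reduct $M\in\Mod(\mu)M'$ (which exists precisely because $\Mod$ does not admit emptiness), deducing $M\models\Phi$ hence $M\models\rho$ from the Satisfaction Condition, and pushing back to $M'\models\Sen(\mu)\rho$. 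Transitivity of entailment together with $\Sen(\mu)\Phi\subseteq E'^\bullet$ then gives $\Sen(\mu)\rho\in E'^\bullet$ for all $\rho\in E^\bullet$, i.e.\ $\Sen(\mu)E^\bullet\subseteq E'^\bullet$.

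For the strong case, $\mu,\theta'_k\in\T$ are $\Mod$-maximal, so $\Mod(\mu)M'=\{M\}$ and $\Mod(\theta'_k)M'=\{N_k\}$ are singletons for any $M'\models E'$; I must show the unique reduct $M$ satisfies $E$. Since $\theta'_k=\theta_k;\mu$, lax functoriality gives $\Mod(\theta_k)M=\Mod(\theta_k)(\Mod(\mu)M')\subseteq\Mod(\theta_k;\mu)M'=\{N_k\}$, and strongness of $\theta'_k$ forces the unique reduct $N_k$ to satisfy $E_k$. As $\theta_k\leq\gamma_k$ we get $\Mod(\gamma_k)M\subseteq\Mod(\theta_k)M\subseteq\{N_k\}$, which no emptiness upgrades to the equality $\Mod(\gamma_k)M=\{N_k\}$; since $\Sen(\gamma_k)$ is total, the Satisfaction Condition then yields $M\models\Sen(\gamma_k)\rho$ for every $\rho\in E_k^\bullet$. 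Hence $M$ satisfies $\bigcup_k\Sen(\gamma_k)E_k^\bullet$, and as $E^\bullet$ is the closure of this set, $M\models E$, so $\mu$ is strong.

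I expect the genuinely delicate point to be the reduct bookkeeping in the strong case: a priori lax functoriality only supplies the inclusion $\Mod(\theta_k)(\Mod(\mu)M')\subseteq\Mod(\theta'_k)M'$, so the good $\theta'_k$-reduct furnished by strongness need not arise as a reduct of $M$ at all. The escape is exactly the hypothesis that all arrows in sight ($\theta_k,\theta'_k,\mu$) belong to $\T$ and are therefore $\Mod$-maximal, collapsing every reduct set to a singleton and turning the inclusion into an equality. In the weak case the corresponding hinge is the entailment-preservation step, which is where $\Mod$ not admitting emptiness is indispensable; everything else is a routine transfer of the signature-level universal property.
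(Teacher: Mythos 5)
Your proposal is correct and follows essentially the same route as the paper: transfer the universal property to the signature level and then verify that the unique mediating arrow $\mu$ is a weak (resp.\ strong) theory morphism, with the weak case handled by the same chain of inclusions via strictness of $\Sen$ and the strong case by the same singleton-reduct bookkeeping using $\Mod$-maximality and non-emptiness. The only difference is that your ``entailment-preservation'' step is an inline proof of the paper's Lemma~\ref{conseq-lem} (which the paper states without proof), so your write-up is if anything slightly more complete.
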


\begin{proof}
We consider a lax $\T_w$/$\T_s$-cocone $\theta'$ for the span of
weak/strong theory morphisms.  
By the lax $\T$-pushout property in $\Sign$ (the category of signature
morphisms) there exists an unique $\mu \in \T$ such that 
$\theta_k ; \mu = \theta'_k$, $k= 0,1,2$. 
It only remains to show that $\mu$ is a weak/strong theory morphism
$(\Sigma,E) \to (\Sigma',E')$, where $(\Sigma',E')$ is the vertex of
$\theta'$. 
\vsp

We first solve the weak case. 
Let us recall that in this case $\gamma_k = \theta_k$. 
For that we need the following lemma (we skip its proof):
\begin{lemma}\label{conseq-lem}
In any $\3/2$-institution such that $\Mod$ does not admit emptiness,
for any signature morphism $\varphi$ that is $\Sen$-maximal and for
any set $E$ of $\dom{\varphi}$-sentences, we have that 
\[
\Sen(\mu)E^\bullet \subseteq (\Sen(\mu)E)^\bullet.
\]
\end{lemma}
Then 
\[
\begin{array}{rll}
\Sen(\mu)E^\bullet = & \Sen(\mu)(\bigcup_{k=0,1,2} \Sen(\theta_k)
                       E_k^\bullet)^\bullet & \\[.2em]
\subseteq &  (\Sen(\mu)\bigcup_{k=0,1,2} \Sen(\theta_k) E_k^\bullet)^\bullet &
\text{by the second and third assumptions and by Lemma \ref{conseq-lem}}
  \\[.2em]
= &  (\bigcup_{k=0,1,2} \Sen(\mu)(\Sen(\theta_k)
            E_k^\bullet))^\bullet & \\[.2em]
= & (\bigcup_{k=0,1,2} \Sen(\theta_k;\mu) E_k^\bullet))^\bullet & 
\text{by the strictness assumption on }\Sen \\[.2em]
= & \bigcup_{k=0,1,2} \Sen(\theta'_k) E_k^\bullet))^\bullet & \\[.2em]
\subseteq & (E'^\bullet)^\bullet & \text{since }\theta'_k \text{ are
                                   weak theory morphisms} \\[.2em]
= & E'^\bullet. & 
\end{array}
\]
Now comes the strong case.
We consider a $\Sigma'$-model $M'$ such that $M' \models E'$.
Since $\mu, \theta_k \in \T$ \ are $\Mod$-maximal, let $M$ be the
unique model in $\Mod(\mu)M'$ and for each $k= 0,1,2$ let $M_k$ be the
unique model in $\Mod(\theta_k)M$. 
Since $\theta_k \leq \gamma_k$, by the monotonicity of $\Mod$ it
follows that $\Mod(\gamma_k)M \subseteq \Mod(\theta_k)M$.
Since $\Mod$ does not admit emptiness this means that $M_k$ is the
unique member of $\Mod(\gamma_k)M$ too. 

By the lax property of $\Mod$ and by the equalities $\theta'_k =
\theta_k;\mu$ it follows that 
\[
\Mod(\theta_k)(\Mod(\mu)M') \subseteq \Mod(\theta'_k)M'
\]
which means 
\[
\Mod(\theta_k)M \subseteq \Mod(\theta'_k)M'.
\]
By the $\Mod$-maximality assumption it follows that $\Mod(\theta'_k)M'
= \{ M_k \}$. 
Since $\theta'_k$ is a strong theory morphism $(\Sigma_k,E_k) \to
(\Sigma',E')$ we have that $M_k \models E_k$. 
By the Satisfaction Condition for $\gamma_k$ (and by keeping in mind that
$\Sen(\gamma_k)$ is total) we obtain that $M \models
\Sen(\gamma_k)E_k^\bullet$, $k=0,1,2$. 
This shows that $M \models E$. 
\end{proof}

The only apparently restrictive assumption in the applications is the
$\Sen$/$\Mod$-maximality condition on the signature morphisms in
$\T$. 
Very often $\Sen$ and $\Mod$-maximality say the same thing, namely
that the corresponding signature morphisms are total. 
However Prop.~\ref{total-seed-prop} tells us that in many situations
of interest, anyway one cannot get beyond that with lax
$\T$-pushouts. 
Although this does not constitute a real restriction in the
applications, we may also note that the weak case adds a supplementary 
technical condition to the strong case, namely that $\Sen$ is lax.  
\vsp   

\begin{proposition}[Lifting model amalgamation from signatures to
  theories]
\label{amalg-prop}
Under the framework of Prop.~\ref{lift-prop}, if 
\begin{itemize}

\item the lax cocone of signature morphisms has (weak) model
  amalgamation, and 

\item $E^\bullet = (\bigcup_{k=0,1,2} \Sen(\gamma_k)
  E_k^\bullet)^\bullet$ 

\end{itemize}
then the lax cocone of theory morphisms has (weak) model amalgamation
too. 
\end{proposition}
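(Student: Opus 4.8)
The plan is to reduce model amalgamation in the $\3/2$-institution of theory morphisms to the already-available amalgamation of the underlying signature cocone, the only real work being to check that the signature amalgamation is a model of the generated theory $E$.

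First I would unfold the relevant model structures. By Cor.~\ref{th-institution-cor1}, in $\I_w$ (resp.\ $\I_s$) a model of the diagram \eqref{lax-cocone-th-equation} is nothing but a model of the underlying diagram \eqref{lax-cocone-signature-equation} of signature morphisms in which, additionally, each component $M_k$ satisfies $E_k$, since $\Mod_i(\theta_k)M=\{N\in\Mod(\theta_k)M\mid N\models E_k\}$. Hence a model of the span of theory morphisms is a family $\{M_0,M_1,M_2\}$ which is a model of the underlying span of signature morphisms and satisfies $M_k\models E_k$, while an amalgamation over \eqref{lax-cocone-th-equation} is a $\Sigma$-model $M$ with $M\models E$ and $M_k\in\Mod(\theta_k)M$ for $k=0,1,2$. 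Thus the problem splits into producing such an $M$ and, in the non-weak case, establishing its uniqueness.

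Next I would construct $M$. Given a model $\{M_0,M_1,M_2\}$ of the span of theory morphisms, I discard the theories and view it as a model of the underlying signature span; the assumed (weak) model amalgamation of the signature lax cocone then delivers a $\Sigma$-model $M$ with $M_k\in\Mod(\theta_k)M$, $k=0,1,2$. The whole remaining content is the verification that $M\models E$. Since $E\subseteq E^\bullet=(\bigcup_{k}\Sen(\gamma_k)E_k^\bullet)^\bullet$, it suffices to show $M\models\Sen(\gamma_k)E_k^\bullet$ for each $k$: then $M$ satisfies a set whose consequence closure is $E^\bullet$, so $M\models E^\bullet$ and in particular $M\models E$. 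Uniqueness in the strict case is immediate, for any amalgamation of $\{M_0,M_1,M_2\}$ in $\I_w$/$\I_s$ is in particular a completion over the signature cocone, so uniqueness transfers verbatim from the signature level.

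The core is therefore $M\models\Sen(\gamma_k)E_k^\bullet$, and here the two kinds of theory morphism behave differently. In the weak case $\gamma_k=\theta_k$, so for every $\rho\in E_k^\bullet\cap\DOM\Sen(\theta_k)$ I apply the Satisfaction Condition \eqref{sat-cond-eq} to $\theta_k$: as $M_k\in\Mod(\theta_k)M$ and $M_k\models E_k$ give $M_k\models\rho$, we obtain $M\models\Sen(\theta_k)\rho$, and these translations are exactly the elements of $\Sen(\theta_k)E_k^\bullet$. I expect the strong case to be the main obstacle. There $\theta_k\le\gamma_k$ with $\Sen(\gamma_k)$ total but possibly $\gamma_k\ne\theta_k$, so the witness $M_k$ is only a $\theta_k$-reduct of $M$, whereas sentences $\Sen(\gamma_k)\rho$ with $\rho\in E_k^\bullet\setminus\DOM\Sen(\theta_k)$ can only be reached through the Satisfaction Condition for $\gamma_k$, which speaks about $\gamma_k$-reducts. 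Since $\theta_k\le\gamma_k$ yields merely $\Mod(\gamma_k)M\subseteq\Mod(\theta_k)M$, a $\theta_k$-reduct need not be a $\gamma_k$-reduct; I must therefore exhibit some $\gamma_k$-reduct $N_k\in\Mod(\gamma_k)M$ with $N_k\models E_k$ and then invoke the Satisfaction Condition for $\gamma_k$ to conclude $M\models\Sen(\gamma_k)\rho$ for all $\rho\in E_k^\bullet$. Bridging the $\theta_k$-reduct $M_k$ to such a $\gamma_k$-reduct is the delicate step: here I expect to lean on the totality of $\Sen(\gamma_k)$ together with the freedom left by weak amalgamation to choose the completion of $\{M_0,M_1,M_2\}$ so that it agrees with $M_k$ on the additional symbols governed by $\gamma_k$; in the strict case the unique completion should already pin these symbols down compatibly.
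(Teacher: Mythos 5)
Your treatment of the weak case is correct and coincides with the paper's own proof: take a signature-level (weak) amalgamation $M$, use the Satisfaction Condition for $\theta_k$ (legitimate there because $\gamma_k=\theta_k$, $M_k\in\Mod(\theta_k)M$ and $M_k\models E_k^\bullet$) to get $M\models\Sen(\theta_k)E_k^\bullet$ for each $k$, close under $\bullet$ to get $M\models E^\bullet\supseteq E$, and transfer uniqueness downward since every theory-level completion is in particular a signature-level one.

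In the strong case, the obstacle you isolated is genuine, and it cannot be bridged. First, be aware that the paper's proof handles both cases ``in one shot'' precisely by asserting $M_k\in\Mod(\gamma_k)M$ --- exactly the step you refused to take; it is unjustified when $\theta_k\leq\gamma_k$ with $\gamma_k\neq\theta_k$, since the monotonicity of $\Mod$ gives only $\Mod(\gamma_k)M\subseteq\Mod(\theta_k)M$, the wrong direction. Second, your hoped-for repair (use the slack of weak amalgamation to choose a completion agreeing with $M_k$ on the symbols seen by $\gamma_k$ but not by $\theta_k$) cannot work, because no suitable completion need exist: in $\3/2 \PL$ take $\Sigma_0=\emptyset$, $\Sigma_1=\{p\}$, $\Sigma_2=\{q\}$, $\Sigma=\{r\}$, let $\varphi_1,\varphi_2,\theta_0,\theta_1$ be the empty morphisms and $\theta_2$ total with $\theta_2(q)=r$ (all lax-cocone inequalities hold trivially), and let $E_0=E_2=\emptyset$, $E_1=\{p\}$. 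The $\varphi_k$ are trivially strong theory morphisms, and the cocone $(\theta_0,\theta_1,\theta_2)$ has even non-weak model amalgamation: the unique completion of $(M_0,M_1,M_2)$ is the $M$ with $M_r=(M_2)_q$. Now choose $\gamma_0=\theta_0$, $\gamma_2=\theta_2$, $\gamma_1$ total with $\gamma_1(p)=r$, and $E=\{r\}$; then $E^\bullet=(\bigcup_{k}\Sen(\gamma_k)E_k^\bullet)^\bullet$ holds. For the theory-span model with $(M_1)_p=1$ and $(M_2)_q=0$, a theory-level completion $M$ would have to satisfy both $M_r=1$ (from $M\models E$) and $M_r=(M_2)_q=0$, so none exists: weak model amalgamation fails for the lifted cocone of strong theory morphisms, although every hypothesis of the proposition is met. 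So the completion may have no freedom whatsoever, and the strong half of the statement is itself false as formulated; your proposal proves exactly the part of the proposition that is provable, and the ``delicate step'' you flagged is a defect of the statement and of the paper's proof rather than of your attempt.
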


\begin{proof}
We treat both the `weak' and the `strong' case in one shot because
there is no essential difference between them.

Let $i\in \{ w,s \}$. 
We consider $(M_0,M_1,M_2)$ a model for the span of theory morphisms. 
According to the definition of $\Mod_i$ we have that $M_0 \in
\Mod(\varphi_k)M_k$ for $k=1,2$. 
We show that if $M$ is an amalgamation of $M_0$, $M_1$, and $M_2$ with
respect to the lax cocone of signature morphisms then it is
an amalgamation with respect to the lax cocone of theory
morphisms too.  
 
Let $k\in \{ 0,1,2 \}$. 
Since $M_k \in \Mod(\gamma_k)M$, since $M_k \models
E_k^\bullet$, by the Satisfaction Condition it follows that 
$M \models \Sen(\gamma_k)E_k^\bullet$.  
Hence $M \models \bigcup_{k=0,1,2}\Sen(\gamma_k)E_k^\bullet$.
Therefore $M \models
(\bigcup_{k=0,1,2}\Sen(\gamma_k)E_k^\bullet)^\bullet = E^\bullet$.
This completes the proof for the weak model amalgamation case.

The conclusion can be extended to the proper (non-weak) model
amalgamation case by noting (by a simple \emph{reductio ad absurdum}
argument) that the uniqueness of amalgamation at the level of
signature morphisms implies the uniqueness at the level of theory
morphisms.   
\end{proof}

\subsection{Theory changes}\label{th-change-sec}

In this section we develop an alternative concept of mapping between
theories in $\3/2$-institutions that does not resemble or generalise
the theory morphisms from 1-institution theory, but which models
software changes.  
Theory changes formalise the process of modifications in specification
or declarative programs. 
In this modelling a flat (unstructured) specification or program is
modelled by a theory. 
Modifications or changes operate at two different levels, at the
signature and the sentences level. 
The changes at the signature level are encapsulated in the respective
concept of signature morphism, while those at the sentences level are
made explicit and modelled by the partial inclusion component of the
concept of theory changes. 
This represents a marking of the part of the translated sentences that
is not touched by the change, which may consist both of deletions or
of adding sentences. 
The fact that the partial inclusion is not necessarily maximal
accounts for the possibility that sentences may be deleted and later
added back, or viceversa. 
Also we assume that the programmer is not committed to the parts that
he leaves unchanged. 
\vsp

First we develop a theory of partial inclusions.
A partial function $f \co A \pto B$ is an \emph{inclusion} when $f$
consists only of pairs of elements of the form $(a,a)$. 
It follows that $f \subseteq (A \cap B)^2$ and that $f = \{ (a,a) \mid
a \in \DOM f \}$. 
Note that, unlike in the case of total inclusions, given two sets $A$
and $B$ they may admit more than one partial inclusion between them
and in any case at least one (the empty one). 

Given $A_1,A_2 \subseteq A$, a partial function $f\co A \pto B$ and a
partial inclusion $i \co A_1 \pto A_2$ we let 
$f(i) = \{ (f^0(a),f^0(a)) \mid a \in \DOM(f), (a,a)\in i \}$. 

\begin{lemma}\label{pinc-lem}
$f(i)$ is a partial inclusion $f(A_1) \pto f(A_2)$. 
\end{lemma}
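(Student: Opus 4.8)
The plan is to unwind the definitions and observe that almost everything comes for free from the shape of the defining set. By construction every element of $f(i)$ has the form $(f^0(a), f^0(a))$, a pair with identical components, so $f(i)$ is a subset of the diagonal of $B \times B$. I would record that this immediately yields two of the required properties: $f(i)$ is functional (a subset of the diagonal is trivially a partial function, since $(b,b),(b,b') \in f(i)$ forces $b = b'$), and each of its pairs already has the inclusion form $(b,b)$. Hence the only substantive task is to identify the two sets between which $f(i)$ is an inclusion.

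Next I would check that every value $b = f^0(a)$ occurring in $f(i)$ lies in $f(A_1) \cap f(A_2)$. Fixing such a pair, we have $a \in \DOM f$ and $(a,a) \in i$ from the definition of $f(i)$. Since $i$ is a partial inclusion $A_1 \pto A_2$, it is contained in $A_1 \times A_2$, so $(a,a) \in i$ places $a$ simultaneously in $A_1$ and in $A_2$ (equivalently, $a \in \DOM i \subseteq A_1 \cap A_2$). Combining $a \in A_1 \cap \DOM f$ with the definition $f(A_1) = \{ f^0(a') \mid a' \in A_1 \cap \DOM f \}$ gives $b \in f(A_1)$, and the identical step with $A_2$ gives $b \in f(A_2)$.

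Putting the two parts together, $f(i) \subseteq \{ (b,b) \mid b \in f(A_1) \cap f(A_2) \}$, which is precisely the assertion that $f(i)$ is a partial inclusion $f(A_1) \pto f(A_2)$. I expect no real obstacle here: the whole statement is a definition chase. The single point worth keeping straight is that the one hypothesis $(a,a) \in i$ must be used twice --- once to land $a$ in $A_1$ and once in $A_2$ --- so that $b$ ends up in both images at once; the functionality and the diagonal form of $f(i)$ never need a separate argument.
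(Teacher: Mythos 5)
Your proof is correct, and it is exactly the routine definition chase the paper has in mind: the paper states Lemma~\ref{pinc-lem} without any proof, treating it as an immediate consequence of the definitions. Your verification --- diagonal form gives functionality and the inclusion shape for free, and the single hypothesis $(a,a)\in i \subseteq A_1 \times A_2$ is used twice to place $f^0(a)$ in both $f(A_1)$ and $f(A_2)$ --- supplies precisely the omitted argument with no gaps.
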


Another fact gives a functorial property for the above notation:

\begin{lemma}\label{pinc2-lem}
Given $A_1,A_2,A_3 \subseteq A$, a partial function $f\co A \pto B$ and 
partial inclusions $i_1 \co A_1 \pto A_2, i_2 \co A_2 \pto A_3$, we
have that $f(i_1;i_2) = f(i_1);f(i_2)$.  
\end{lemma}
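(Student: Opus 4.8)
The plan is to reduce the claimed identity to a single set-theoretic statement about images of the underlying partial function, exploiting the fact that a partial inclusion is completely determined by its definition domain. First I would observe that both sides are partial inclusions $f(A_1) \pto f(A_3)$. The composite $i_1;i_2$ is itself a partial inclusion $A_1 \pto A_3$, since the relational composition of two diagonal relations is again diagonal; hence by Lemma~\ref{pinc-lem} the left-hand side $f(i_1;i_2)$ is a partial inclusion. The right-hand side is the composition of the two partial inclusions $f(i_1)$ and $f(i_2)$ furnished by Lemma~\ref{pinc-lem}, which is again a partial inclusion. Because every partial inclusion $g$ satisfies $g = \{ (a,a) \mid a\in \DOM g \}$, it suffices to prove that the two sides share the same definition domain.

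Next I would compute these two domains explicitly. Writing $D_j = \DOM i_j$ (so $D_1 \subseteq A_1 \cap A_2$ and $D_2 \subseteq A_2 \cap A_3$), the composite has $\DOM(i_1;i_2) = D_1 \cap D_2$, while directly from the definition of the notation $f(-)$ one gets $\DOM(f(i)) = \{ f^0(a) \mid a \in \DOM f \cap \DOM i \} = f(\DOM i)$ for any partial inclusion $i$. Hence the domain of the left-hand side is $f(D_1 \cap D_2)$. For the right-hand side, the relational composition of the diagonal relations $f(i_1)$ and $f(i_2)$ has domain $\DOM(f(i_1)) \cap \DOM(f(i_2)) = f(D_1) \cap f(D_2)$. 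The entire lemma therefore collapses to the identity $f(D_1 \cap D_2) = f(D_1) \cap f(D_2)$.

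At this point the inclusion $f(D_1 \cap D_2) \subseteq f(D_1) \cap f(D_2)$ is immediate from monotonicity of the image and requires no hypothesis. The step I expect to be the main obstacle is the reverse inclusion $f(D_1) \cap f(D_2) \subseteq f(D_1 \cap D_2)$: an element on the left is of the form $f^0(a_1) = f^0(a_2)$ with $a_1 \in \DOM f \cap D_1$ and $a_2 \in \DOM f \cap D_2$, and to place it in $f(D_1 \cap D_2)$ one must produce a \emph{single} witness lying in $D_1 \cap D_2$. This is exactly the classical fact that images need not commute with intersections: the step goes through when $f^0$ is injective on $\DOM i_1 \cup \DOM i_2$ (which forces $a_1 = a_2$), but it can fail otherwise, as one sees already by collapsing two distinct arguments with disjoint marked domains onto a common value. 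I would therefore treat this reverse inclusion as the crux of the argument, invoking injectivity of $f^0$ on the relevant arguments, and would scrutinise whether the intended application supplies that injectivity (or some weaker sufficient condition) rather than assuming the identity holds for an arbitrary partial function $f$.
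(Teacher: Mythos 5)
Your analysis is correct as far as it goes, and the step you flagged as the crux does indeed fail: the paper offers no proof of Lemma~\ref{pinc2-lem} at all (it is stated as a bare ``fact'' after Lemma~\ref{pinc-lem}), and the statement is false for general $f$. Your reduction is exactly right: both sides are diagonal relations, with $\DOM f(i_1;i_2) = f(\DOM i_1 \cap \DOM i_2)$ and $\DOM\,(f(i_1);f(i_2)) = f(\DOM i_1) \cap f(\DOM i_2)$, so the lemma is equivalent to the image of the intersection of the two definition domains coinciding with the intersection of their images. The counterexample you sketch works verbatim: take $A = \{a_1,a_2\}$, $B=\{b\}$, $A_1=A_2=A_3=A$, let $f$ be the total function with $f(a_1)=f(a_2)=b$, and let $i_1 = \{(a_1,a_1)\}$, $i_2=\{(a_2,a_2)\}$. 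Then $i_1;i_2=\emptyset$, hence $f(i_1;i_2)=\emptyset$, while $f(i_1)=f(i_2)=\{(b,b)\}$, hence $f(i_1);f(i_2)=\{(b,b)\}$. Unconditionally one only gets the inclusion $f(i_1;i_2) \subseteq f(i_1);f(i_2)$; equality requires precisely the kind of hypothesis you name, e.g.\ injectivity of $f^0$ on $\DOM i_1 \cup \DOM i_2$.

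Your closing question --- whether the intended application supplies that injectivity --- is the right one, and the answer is negative. The lemma is invoked in the associativity step of the proposition that theory changes form a $\3/2$-category, with $f = \Sen(\varphi'')$ a sentence translation, and sentence translations are typically non-injective (in $\3/2 \PL$ a signature morphism may identify propositional variables and hence sentences). In fact your counterexample lifts to theory changes in $\3/2 \PL$: take $E = \{p_1\}$, $E' = \{p_1',p_2'\}$, $E''=\{q_2\}$, $E'''=\{r\}$, the total signature morphisms $p_1 \mapsto p_1'$, then $p_1'\mapsto q_1,\ p_2'\mapsto q_2$, then $q_1,q_2\mapsto r$, and the maximal partial inclusions as the second components. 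Then the two associations of the triple composite have partial-inclusion components $\emptyset$ and $\{(r,r)\}$ respectively, so associativity of the composition of theory changes itself fails, not merely the paper's argument for it (only a lax form of associativity survives, via the one valid inclusion above together with Lemma~\ref{pinc3-lem}). So what you have found is a genuine error in the paper; a repair must either weaken Lemma~\ref{pinc2-lem} and the ensuing compositional structure to inequalities, or restrict attention to signature morphisms whose sentence translations are injective on the relevant sets.
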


Based on Lemmas \ref{pfun-lem} and \ref{pinc-lem} we get another
property: 

\begin{lemma}\label{pinc3-lem}
Given partial functions $f \co A \pto B$ and $g \co B \pto C$, sets
$A_1,A_2 \subseteq A$ and partial inclusion $i \co A_1 \pto A_2$ we
have $(f;g)(i)=g(f(i))$.  
\end{lemma}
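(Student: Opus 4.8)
The plan is to reduce the claimed identity of partial inclusions to an identity of their definition domains, and then verify the latter by a direct element chase that keeps careful track of the composed definition domain $\DOM(f;g)$.

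First I would pin down the types of both sides, so that the equation even typechecks and so that both sides are known to be partial inclusions. By Lemma~\ref{pinc-lem}, $f(i)$ is a partial inclusion $f(A_1) \pto f(A_2)$, and applying Lemma~\ref{pinc-lem} once more, $g(f(i))$ is a partial inclusion $g(f(A_1)) \pto g(f(A_2))$. On the other side, $(f;g)(i)$ is a partial inclusion $(f;g)(A_1) \pto (f;g)(A_2)$ by the same lemma. By Lemma~\ref{pfun-lem} we have $(f;g)(A_j) = g(f(A_j))$ for $j=1,2$, so $(f;g)(i)$ and $g(f(i))$ are both partial inclusions between exactly the same pair of sets. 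Since a partial inclusion is completely determined by the set of elements $c$ for which $(c,c)$ belongs to it, it then suffices to prove that $(c,c) \in (f;g)(i)$ if and only if $(c,c) \in g(f(i))$.

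For the left-hand condition I would unfold the definition of the $(\cdot)(i)$ operation for the partial function $f;g$: $(c,c) \in (f;g)(i)$ means there is an $a \in \DOM(f;g)$ with $(a,a) \in i$ and $c = (f;g)^0(a)$. Here the only genuinely partial-function subtlety enters, namely that $a \in \DOM(f;g)$ unfolds to $a \in \DOM(f)$ together with $f^0(a) \in \DOM(g)$, and that on this domain $(f;g)^0(a) = g^0(f^0(a))$. Thus the left-hand condition is equivalent to the existence of $a \in \DOM(f)$ with $(a,a) \in i$, $f^0(a) \in \DOM(g)$, and $c = g^0(f^0(a))$. For the right-hand condition I would unfold $g(f(i))$ from the outside in: $(c,c) \in g(f(i))$ means there is a $b \in \DOM(g)$ with $(b,b) \in f(i)$ and $c = g^0(b)$, and $(b,b) \in f(i)$ in turn means there is an $a \in \DOM(f)$ with $(a,a) \in i$ and $b = f^0(a)$. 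Substituting $b = f^0(a)$ yields precisely the same conjunction as above, so the two conditions coincide and the two partial inclusions are equal.

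I do not expect a real obstacle here. The one place that demands care — and the reason this is, in the words of the paper for Lemma~\ref{pfun-lem}, ``not as immediate as in the case of the total functions'' — is the handling of the definition domain of the composite: I would state the equivalence $a \in \DOM(f;g) \iff \bigl(a \in \DOM(f) \text{ and } f^0(a) \in \DOM(g)\bigr)$ explicitly rather than take it silently, since it is exactly this condition (absent for total functions) that keeps the two chains of unfoldings aligned.
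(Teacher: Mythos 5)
Your proof is correct and follows exactly the route the paper intends: the paper states Lemma~\ref{pinc3-lem} without an explicit proof, noting only that it follows from Lemmas~\ref{pfun-lem} and~\ref{pinc-lem}, and your argument is precisely the element chase filling in that claim, using Lemma~\ref{pinc-lem} to type both sides, Lemma~\ref{pfun-lem} to match their domains and codomains, and the unfolding of $\DOM(f;g)$ to align the two membership conditions. No gaps.
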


\begin{definition}[Theory changes]
In any $\3/2$-institution a \emph{theory change} $(\varphi,i) \co
(\Sigma,E) \to (\Sigma',E')$ consists of:
\begin{itemize}

\item theories $(\Sigma,E)$ and $(\Sigma',E')$;

\item a signature morphism $\varphi\co \Sigma \ra \Sigma'$; and 

\item a partial inclusion $i \co \Sen(\varphi)E \pto E'$. 

\end{itemize}
\end{definition}

\begin{proposition}
For any $\3/2$-institution $\I$ with a strict sentence functor theory
changes form a $\3/2$-category as follows: 
\begin{itemize}

\item the composition of theory changes is as shown by the following
  diagram:  
\[
\xymatrix{
(\Sigma,E) \ar[r]^{(\varphi,i)} 
\ar[dr]_{\hspace{-6em}(\varphi;\theta,\Sen(\theta)(i);j)} & 
  (\Sigma',E') \ar[d]^{(\theta,j)} \\[.2em]
  & (\Sigma'',E'')
}
\]

\item the partial order on theory changes $(\Sigma,E) \to (\Sigma',E')$
is given by: 
\[
(\varphi,i) \leq (\varphi',i') \ \text{ \ if and only if \ } \
\varphi \leq \varphi' \text{ and } i \subseteq i'.
\]

\end{itemize}
\end{proposition}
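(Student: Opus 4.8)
The plan is to verify, in turn, that the proposed composition is well-defined (the composite is again a theory change), that it is associative with the evident identities, that the relation $\leq$ is a partial order on each hom-set, and finally that composition is monotone with respect to $\leq$; this last point is what upgrades the data to a genuine $\3/2$-category. Throughout, the two workhorses will be the strictness of $\Sen$ (which lets me replace $\Sen(\varphi);\Sen(\theta)$ by $\Sen(\varphi;\theta)$ and $\Sen(1_\Sigma)$ by $1_{\Sen(\Sigma)}$) and the three transport lemmas for partial inclusions, Lemmas \ref{pinc-lem}, \ref{pinc2-lem} and \ref{pinc3-lem}, together with Lemma \ref{pfun-lem}.

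For well-definedness, given $(\varphi,i)\co(\Sigma,E)\to(\Sigma',E')$ and $(\theta,j)\co(\Sigma',E')\to(\Sigma'',E'')$, I first note that $i$ is a partial inclusion between the subsets $\Sen(\varphi)E$ and $E'$ of $\Sen(\Sigma')$, so Lemma \ref{pinc-lem} applies to $\Sen(\theta)$ and $i$ and yields a partial inclusion $\Sen(\theta)(i)\co\Sen(\theta)(\Sen(\varphi)E)\pto\Sen(\theta)E'$. By Lemma \ref{pfun-lem} the source equals $(\Sen(\varphi);\Sen(\theta))E$, and by strictness of $\Sen$ this is exactly $\Sen(\varphi;\theta)E$; since $j\co\Sen(\theta)E'\pto E''$ is a partial inclusion and partial inclusions are closed under composition, the composite $\Sen(\theta)(i);j$ is a partial inclusion $\Sen(\varphi;\theta)E\pto E''$, as required.

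Associativity of the signature components is inherited from $\Sign^\I$; for the inclusion components, comparing $(\varphi;\theta;\psi,\,\Sen(\psi)(\Sen(\theta)(i);j);k)$ with $(\varphi;\theta;\psi,\,\Sen(\theta;\psi)(i);\Sen(\psi)(j);k)$ reduces, via Lemma \ref{pinc2-lem} (distribution of $\Sen(\psi)(-)$ over composition of inclusions) and Lemma \ref{pinc3-lem} together with strictness (which gives $\Sen(\psi)(\Sen(\theta)(i))=\Sen(\theta;\psi)(i)$), to an identity. The identity on $(\Sigma,E)$ is $(1_\Sigma,1_E)$, where $1_E$ is the total inclusion of $E=\Sen(1_\Sigma)E$ into itself (the equality using strictness); a short computation with $\Sen(\varphi)(1_E)=1_{\Sen(\varphi)E}$ and $\Sen(1_{\Sigma'})(i)=i$ confirms the two unit laws.

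That $\leq$ is a partial order on each hom-set is immediate: it is the conjunction of the partial order on $\Sign^\I(\Sigma,\Sigma')$ with relation-inclusion of partial inclusions, and $\varphi\leq\varphi'$ forces $\Sen(\varphi)E\subseteq\Sen(\varphi')E$ so the comparison $i\subseteq i'$ is meaningful. The substantive step is monotonicity of composition. Assuming $(\varphi_1,i_1)\leq(\varphi_2,i_2)$ and $(\theta_1,j_1)\leq(\theta_2,j_2)$, the signature component $\varphi_1;\theta_1\leq\varphi_2;\theta_2$ follows from order-preservation of composition in $\Sign^\I$. For the inclusion component I expect the main (though still routine) obstacle: I must derive $\Sen(\theta_1)(i_1)\subseteq\Sen(\theta_2)(i_2)$ from $\theta_1\leq\theta_2$ and $i_1\subseteq i_2$. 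Here $\theta_1\leq\theta_2$ gives $\Sen(\theta_1)\subseteq\Sen(\theta_2)$ by $\3/2$-functoriality of $\Sen$, so $\Sen(\theta_1)^0$ is a restriction of $\Sen(\theta_2)^0$; unfolding the definition of $\Sen(\theta)(-)$ and using $i_1\subseteq i_2$ then shows each generating pair of $\Sen(\theta_1)(i_1)$ lies in $\Sen(\theta_2)(i_2)$. Composing with $j_1\subseteq j_2$ and using monotonicity of partial-function composition yields $\Sen(\theta_1)(i_1);j_1\subseteq\Sen(\theta_2)(i_2);j_2$, which completes the verification.
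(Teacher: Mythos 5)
Your proof is correct and follows essentially the same route as the paper's: well-definedness via Lemma \ref{pinc-lem}, Lemma \ref{pfun-lem} and strictness of $\Sen$; associativity via Lemmas \ref{pinc2-lem} and \ref{pinc3-lem} plus strictness; and monotonicity of composition by transporting the inclusion components along $\Sen$. You even fill in two details the paper leaves implicit, namely the unit laws for $(1_\Sigma,1_E)$ and the explicit verification that $\theta_1\leq\theta_2$ yields $\Sen(\theta_1)(i)\subseteq\Sen(\theta_2)(i)$, which the paper covers only with the remark that ``the other situation'' gets ``a similar proof.''
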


\begin{proof}
The composition of theory changes is correctly defined because 
\begin{itemize}

\item by lemma \ref{pinc-lem} $\Sen(\theta)(i)$ is a partial
  inclusion $\Sen(\theta)(\Sen(\varphi)E) \pto \Sen(\theta)E'$, 

\item the composition of partial inclusions is a partial inclusion,
  hence $\Sen(\theta)(i);i'$ is a partial inclusion
  $\Sen(\theta)(\Sen(\varphi)E) \pto E''$, and 

\item by Lemma \ref{pfun-lem} and by the \emph{strict} functoriality
of $\Sen$ we have that $\Sen(\theta)(\Sen(\varphi)E) =
  \Sen(\varphi;\theta)E$. 

\end{itemize}
The partial order on theory changes is also correctly defined because
whenever $\varphi \leq \theta$ this implies $\Sen(\varphi)\subseteq
\Sen(\theta)$ which implies $\Sen(\varphi)E \subseteq \Sen(\theta)E$. 
Then $i\subseteq j$ parses as a subset relationship between subsets of
$\Sen(\theta)E\times E'$.  

The understanding of the proof of the associativity of the composition
of theory changes is helped by inspecting the following diagram:  
\[
\xymatrix @C+5em {
(\Sigma,E) \ar@{=}[d] 
\ar@/^2em/@{.>}[rr]^-{((\varphi;\varphi');\varphi'',
\Sen(\varphi'')(\Sen(\varphi')(i);i')
  \ ; \ i'')}  
\ar@{.>}[r]_-{(\varphi;\varphi',\Sen(\varphi')(i);i')}
& (\Sigma'',E'') \ar[r]_{(\varphi'',i'')}  
& (\Sigma''',E''') \ar@{=}[d] \\[.2em]
(\Sigma,E) \ar[r]^{(\varphi,i)} 
\ar@/_2em/@{.>}[rr]_-{(\varphi;(\varphi';\varphi'') , 
\Sen(\varphi';\varphi'')(i);\Sen(\varphi'')(i');i'')}
& (\Sigma',E') \ar[u]^{(\varphi',i')}  
\ar@{.>}[r]^-{(\varphi';\varphi'',\Sen(\varphi'')(i');i'')}
& (\Sigma''',E''') 
}
\]
Thus all we have to show is that 
$\Sen(\varphi'')(\Sen(\varphi')(i);i') ; i'' = 
\Sen(\varphi';\varphi'')(i);\Sen(\varphi'')(i');i''$, its proof being: 
$$\begin{array}{rll}
\Sen(\varphi'')(\Sen(\varphi')(i);i') = 
   & \Sen(\varphi'')(\Sen(\varphi')(i));\Sen(\varphi'')(i')
   & \quad \text{by Lemma \ref{pinc2-lem}} \\[.2em]
=  & (\Sen(\varphi');\Sen(\varphi''))(i);\Sen(\varphi'')(i')
   & \quad \text{by Lemma \ref{pinc3-lem}} \\[.2em]
=  & \Sen(\varphi';\varphi'')(i);\Sen(\varphi'')(i')
   & \quad \text{by the \emph{strict} functoriality of }\Sen.
\end{array}$$

For showing the preservation of partial orders by compositions we
consider only the case when $(\varphi;i) \leq (\varphi',i')$ and
$\cod{\varphi} = \cod{\varphi'} = \dom{\theta}$, the other situation
getting a similar proof. 
By the definition of composition we have that 
\begin{itemize}

\item $(\varphi,i);(\theta,j) = (\varphi;\theta, \Sen(\theta)(i);j)$,
  and 

\item $(\varphi',i');(\theta,j) = (\varphi';\theta,
  \Sen(\theta)(i');j)$. 

\end{itemize}
From the monotonicity of composition in $\Sign$ it follows that
$(\varphi,i)\leq (\varphi',i')$. 
From $i \subseteq i'$ it follows that 
$\Sen(\theta)(i) \subseteq \Sen(\theta)(i')$ and further that 
$\Sen(\theta)(i);j \subseteq \Sen(\theta)(i');j$. 
\end{proof}

The following is another example of a $\3/2$-institution that does not
fall into the partiality pattern characteristic to $\3/2 \PL$, $\3/2
\MSA$, etc. 

\begin{corollary}
For any $\3/2$-institution $\I$ with a strict sentence functor, the
$\3/2$-category of theory changes determines a $\3/2$-institution
$\I^c$ as follows: 
\begin{itemize}

\item the $\3/2$-category of signatures $\Sign^c$ is the
  $\3/2$-category of theory changes, 

\item $\Sen^c$ is a trivial lifting of $\Sen$ to theories,
  i.e. $\Sen^c (\Sigma,E) = \Sen(\Sigma)$ and $\Sen^c(\varphi,i) =
  \Sen(\varphi)$, 

\item $\Mod^c (\Sigma,E)$ is the full subcategory of $\Mod(\Sigma)$
  of the $\Sigma$-models satisfying $E$, and for each theory change
  $(\varphi,i) \co (\Sigma,E) \to (\Sigma',E')$ and each
  $(\Sigma',E')$-model $M'$
\[
\Mod^c (\varphi,i)M' = \{ M \in \Mod(\varphi)M' \mid M \models E \}
\]

\item and the satisfaction relation is inherited from $\I$. 

\end{itemize}
\end{corollary}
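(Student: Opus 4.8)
The plan is to recognise that this statement is, at the level of the model and satisfaction data, a verbatim instance of Corollary~\ref{th-institution-cor1}: the model functor $\Mod^c(\varphi,i)$ is defined by the very same formula as $\Mod_i(\varphi)$ there, and crucially it discards the partial-inclusion component $i$, depending only on the underlying signature morphism $\varphi$. Hence I would first dispatch the routine items and then concentrate the work on the lax functoriality of $\Mod^c$, reusing the earlier argument essentially unchanged.

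For the routine items: that $\Sign^c$ is a $\3/2$-category is exactly the content of the preceding proposition on theory changes, so I would simply cite it. For $\Sen^c$, since $\Sen^c(\Sigma,E) = \Sen(\Sigma)$ and $\Sen^c(\varphi,i) = \Sen(\varphi)$, the functor factors through the projection $(\varphi,i) \mapsto \varphi$ that sends a theory change to its signature morphism; because the composition of theory changes projects to the composition $\varphi;\theta$ of signature morphisms and $\Sen$ is assumed strict, $\Sen^c$ is a (strict) $\3/2$-functor into $\Setp$, its order-preservation being immediate from $(\varphi,i)\leq(\varphi',i') \Rightarrow \varphi \leq \varphi' \Rightarrow \Sen(\varphi)\subseteq\Sen(\varphi')$. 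For the Satisfaction Condition I would note that $\Mod^c(\varphi,i)M' \subseteq \Mod(\varphi)M'$ and $\Sen^c(\varphi,i) = \Sen(\varphi)$, so for $M \in \Mod^c(\varphi,i)M'$ and $\rho \in \DOM(\Sen(\varphi))$ the equivalence $M' \models \Sen(\varphi)\rho$ iff $M \models \rho$ is inherited directly from the Satisfaction Condition \eqref{sat-cond-eq} of $\I$; likewise the $\3/2$-functoriality of $\Mod^c$ in the order (that $(\varphi,i)\leq(\varphi',i')$ yields $\Mod^c(\varphi',i')M'\subseteq\Mod^c(\varphi,i)M'$) drops out of the corresponding property of $\Mod$ by restricting to those reducts satisfying $E$.

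The one substantive step is the lax functoriality of $\Mod^c$. Given composable theory changes $(\varphi,i)\co(\Sigma,E)\to(\Sigma',E')$ and $(\theta,j)\co(\Sigma',E')\to(\Sigma'',E'')$ and a model $M''\models E''$, I would expand $\Mod^c(\varphi,i)(\Mod^c(\theta,j)M'')$ as $\{ M \in \Mod(\varphi)M' \mid M' \in \Mod(\theta)M'', \ M'\models E', \ M\models E \}$, drop the constraint $M'\models E'$, apply the lax functoriality of $\Mod$ in the form $\Mod(\varphi)(\Mod(\theta)M'')\subseteq\Mod(\varphi;\theta)M''$, and conclude the inclusion into $\{ M \in \Mod(\varphi;\theta)M'' \mid M\models E \} = \Mod^c((\varphi,i);(\theta,j))M''$, using that the composite theory change has underlying signature morphism $\varphi;\theta$. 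This is exactly the chain displayed in the proof of Corollary~\ref{th-institution-cor1}; the identity case is equally direct, since $M \in \Mod(1_\Sigma)M$ together with $M \models E$ gives $\{M\} \subseteq \Mod^c(1_{(\Sigma,E)})M$. The only place the strictness hypothesis on $\Sen$ is actually consumed is the well-definedness of the composite theory change itself (via the preceding proposition), not the lax functoriality argument, so I do not expect any genuine obstacle here: all the difficulty has already been absorbed into the earlier results, and the presence of the inclusion component $i$ is inert for the model and satisfaction structure.
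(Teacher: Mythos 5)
Your proposal is correct and matches exactly what the paper intends: the paper states this result without explicit proof, treating it as an immediate consequence of the preceding proposition (which gives the $\3/2$-category $\Sign^c$, and is where the strictness of $\Sen$ is consumed) together with the lax-functoriality argument of Corollary~\ref{th-institution-cor1}, which applies verbatim since $\Mod^c(\varphi,i)$ ignores the inclusion component $i$. Your observation that the component $i$ is inert for the model and satisfaction structure is precisely the point that makes the corollary routine.
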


In what follows we investigate the possibility of modelling merges
of theory changes by pushout constructions. 
In principle, this should be based upon lifting pushouts from the
category of signatures to that of theory changes. 

\begin{proposition}\label{change-pushout-lift-prop}
In general, lax $\T$-pushouts do \emph{not} lift from the category of
signatures to that of theory changes. 
\end{proposition}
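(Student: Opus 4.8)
The plan is to prove this negative statement by a single explicit counterexample: a span of theory changes whose underlying span of signature morphisms admits a lax $\T$-pushout, but which carries no decoration turning that pushout into a lax $\T$-pushout in the $\3/2$-category of theory changes (where, as in the earlier corollaries, $\T$ induces the class of theory changes $(\varphi,i)$ with $\varphi\in\T$). I would work with $\I=\3/2\PL$, whose sentence functor is strict so that theory changes do form a $\3/2$-category, and take $\T$ to be the class of total signature morphisms, for which \emph{any} span in $\Setp$ is already known to admit a lax $\T$-pushout. The whole difficulty is concentrated on a single shared sentence.

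Concretely I would take $\Sigma_0=\{p\}$, $\Sigma_1=\{p,q_1\}$, $\Sigma_2=\{p,q_2\}$ with the total inclusions, whose lax $\T$-pushout is the standard $\Sigma=\{p,q_1,q_2\}$, and the theories $E_0=E_1=E_2=\{p\}$ together with the theory changes $\varphi_k=(\iota_k,\{(p,p)\})$, $k=1,2$, whose inclusion components are full on $p$. The point is that $p$ survives every translation unchanged and is ``switched on'' along both legs, so that all the freedom and all the tension live in the partial-inclusion data over the single element $p$.

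The core is a dichotomy on any candidate lax $\T$-pushout cocone $(\theta_0,\theta_1,\theta_2)$ with apex $(\Sigma,E)$ and inclusion legs $j_k\co\{p\}\pto E$. First I would pin down the signature part: the functor forgetting the inclusion component, $(\varphi,i)\mapsto\varphi$, has the right adjoint $\Sigma\mapsto(\Sigma,\emptyset)$, hence preserves the relevant colimits, so the signature part of any lifted pushout is the standard $\Sigma$ with inclusion legs. Using the composition rule $(\varphi,i);(\theta,j)=(\varphi;\theta,\Sen(\theta)(i);j)$ and strictness of $\Sen$, for target cocones whose signature apex is again $\Sigma$ with inclusion legs the signature mediator is $1_\Sigma$ and the mediating equations reduce to $j_k;m=j'_k$ in the inclusion component $m$. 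The dichotomy then reads: if $E$ contains a sentence $c\neq p$, then $c$ lies outside $\mathrm{im}(j_k)$ for every $k$, so $m$ may or may not contain $(c,c)$, giving two mediators to the identity cocone and destroying uniqueness; and if $E\subseteq\{p\}$, one exploits laxity to build two legitimate lax $\T$-cocone targets treating $p$ asymmetrically, namely $(j'_1,j'_2)=(\{(p,p)\},\emptyset)$ and its mirror $(\emptyset,\{(p,p)\})$, both forced to have $j'_0=\emptyset$ by the cocone inequalities. Existence of a mediator to the first target forces $j_1$ full and $j_2$ empty, to the second it forces $j_1$ empty and $j_2$ full; these are incompatible, and $E=\emptyset$ is excluded at once since then no mediator can reach a target carrying a full leg.

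I expect the main obstacle to be bookkeeping rather than conceptual. One must check that the two asymmetric targets are genuine lax $\T$-cocones of theory changes, i.e. verify $\theta'_0\leq\varphi_k;\theta'_k$ on the partial-inclusion components — this is exactly where the laxity of cocones is indispensable — and that the mediating equations $j_k;m=j'_k$ really force the contradictory ``defined/undefined at $p$'' requirements, which rests on the elementary identity that, for partial inclusions, $\{(p,p)\};m$ equals $m$ when $p\in\DOM m$ and $\emptyset$ otherwise. The secondary delicate point is the reduction via the forgetful adjunction, which is what rules out exotic lifts sitting over a non-minimal signature apex; I would verify that this adjunction survives the passage to the $\3/2$-categorical, lax $\T$-constrained setting.
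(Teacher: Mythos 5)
Your core strategy is sound and, with one correction, gives a complete proof: an explicit span of theory changes over a concrete signature pushout, together with two asymmetric lax cocone targets whose mediating equations impose incompatible definedness requirements on the partial inclusions at the single sentence $p$. The correction concerns the direction of the lax-cocone inequality. In this paper a lax cocone satisfies $(\varphi_k,i_k);(\theta_k,j_k)\leq(\theta_0,j_0)$, i.e.\ the two composites sit \emph{below} the central arrow (see the proof of Prop.~\ref{change-near-pushout-lift-prop}, and the discussion of Figure~\ref{figure:blending}), not $\theta_0\leq\varphi_k;\theta_k$ as you assume. Consequently your asymmetric targets with $j'_0=\emptyset$ are \emph{not} lax cocones: for the target $(j'_1,j'_2)=(\{(p,p)\},\emptyset)$ the leg-$1$ inequality forces $(p,p)\in j'_0$, so the correct decoration is $j'_0=\{(p,p)\}$. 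This is self-correcting: with $j'_0=\{(p,p)\}$ both targets are legitimate lax $\T$-cocones, and the equations $j_k;m=j'_k$ still force ($j_1$ full, $j_2$ empty, $p\in\DOM m$) for the first target and the mirror for the second, which is exactly your contradiction; your Case 1 (uniqueness failure when $E$ contains some $c\neq p$) and the subcase $E=\emptyset$ are unaffected. Separately, the adjunction reduction is not needed for the statement: ``lifting'' fixes the underlying signature data to be the given lax $\T$-pushout, so the apex $\Sigma$ and its inclusion legs are given by hypothesis; ruling out pushouts over other apexes is a stronger claim than the proposition makes, and transferring the adjunction argument to lax $\T$-pushouts in the ordered setting would itself require a proof you have not supplied.

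The paper's own proof follows the same two-target idea but in a more economical and more general form: it takes the degenerate span $\varphi_1=\varphi_2=1_\Sigma$ with $E_0=E_1=E_2=\{\rho\}$ and $i_1=i_2=1_{E_0}$, in an arbitrary $\3/2$-institution with strict $\Sen$, and plays off only two targets: the all-identity cocone, whose mediator forces every $j_k$ to be total, and the cocone $\theta'_1=\theta'_2=(1_\Sigma,\emptyset)$, $\theta'_0=(1_\Sigma,1_{E_0})$, whose mediator $u$ must satisfy $\rho\notin\DOM u$ (from legs $1,2$) and $\rho\in\DOM u$ (from leg $0$). It thus needs no case analysis on the apex theory $E$ and uses only the existence half of the universal property. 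What your version buys is concreteness: the failure is exhibited already in $\3/2\PL$ with $\T$ the total functions and a genuine non-identity span. Note that your example admits the same streamlining: a mediator to the all-full target ($j'_k=\{(p,p)\}$ for all $k$) forces every $j_k$ to be full, after which a single asymmetric target yields the contradiction, eliminating the dichotomy on $E$ entirely.
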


\begin{proof}
Consider a trivial (lax) $\T$-pushout of signature morphisms consisting
only of identities; let the span be $\varphi_1 = \varphi_2 = 1_\Sigma$
and the cocone be $\theta_0 = \theta_1 = \theta_2 = 1_\Sigma$. 
Let $\rho$ be a $\Sigma$-sentence and let 
$E_0 = E_1 = E_2 = \{ \rho \}$ and $i_1 = i_2 = 1_{E_0}$. 

Let us suppose that there exists a lax $\T$-pushout $(1_\Sigma,j_k),
k=0,1,2$ for the span given by $(1_\Sigma,i_1)$ and $(1_\Sigma,i_2)$. 
\begin{itemize}

\item By considering the lax cocone given by $(1_\Sigma,1_{E_0})$
  everywhere we infer that all $j_k, k=0,1,2$ are total.

\item By considering the lax cocone given by $(1_\Sigma,\emptyset)$, 
$(1_\Sigma,1_{E_0})$, $(1_\Sigma,\emptyset)$, let $(1_\Sigma,u)$ be
the unique mediating theory change.
From $(1_\Sigma,j_k);(1_\Sigma,u)=(1_\Sigma,\emptyset), k=1,2$ we
infer that $\rho \not\in \DOM \ u$.
It follows that $j_0 ; u \not= 1_{E_0}$ which is a contradiction. 
\end{itemize}
\end{proof}

By contrast to lax pushout, near pushouts lift trivially from
signatures to theory changes:

\begin{proposition}\label{change-near-pushout-lift-prop}
Given a span of theory changes $(\varphi_k,i_k) \co (\Sigma_0,E_0) \to
(\Sigma_k, E_k)$, $k=1,2$, and a near pushout for the underlying span
of signature morphisms like shown in the following diagram  
\[
\xymatrix @C-2em{
 & & \Sigma & & \\
\Sigma_1 \ar@{.>}[urr]^{\theta_1} & { \ \ \ \ \leq} & &
{ \geq \ \ \ \ } &
\Sigma_2 \ar@{.>}[ull]_{\theta_2}\\
 &  & \Sigma_0 \ar[ull]^{\varphi_1} \ar[urr]_{\varphi_2} \ar@{.>}[uu]_{\theta_0}& &
}
\]
for any $E \subseteq \Sen(\Sigma)$, $(\theta_k,\emptyset)\co
(\Sigma_k,E_k) \ra (\Sigma,E), k=0,1,2$
constitues a near pushout cocone for the given span of theory
changes. 
\end{proposition}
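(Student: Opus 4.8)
The plan is to exploit the empty partial inclusion as a device that makes every sentence-level condition vacuous, thereby reducing the whole statement to the near pushout property that already holds at the signature level by hypothesis. The empty partial inclusion behaves as an absorbing element: composing any partial inclusion with $\emptyset$ yields $\emptyset$, and $\emptyset$ is contained in every partial inclusion. These two facts are all that drive the argument.

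First I would check that $(\theta_k,\emptyset)$, $k=0,1,2$, really is a lax cocone of theory changes for the span $((\varphi_1,i_1),(\varphi_2,i_2))$. Using the composition rule $(\varphi_k,i_k);(\theta_k,\emptyset) = (\varphi_k;\theta_k,\ \Sen(\theta_k)(i_k);\emptyset)$ and the fact that the composite of any partial inclusion with $\emptyset$ is again $\emptyset$, this composite equals $(\varphi_k;\theta_k,\emptyset)$. The required inequality $(\varphi_k;\theta_k,\emptyset) \leq (\theta_0,\emptyset)$ then amounts to $\varphi_k;\theta_k \leq \theta_0$ together with $\emptyset \subseteq \emptyset$; the former is exactly the signature-level lax-cocone condition coming from the given near pushout, and the latter is trivial.

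Next I would take an arbitrary lax cocone $(\theta'_k,j'_k)$, $k=0,1,2$, of theory changes for the same span. For a candidate mediating theory change $(\mu,m)$ the composite $(\theta_k,\emptyset);(\mu,m) = (\theta_k;\mu,\ \Sen(\mu)(\emptyset);m)$ simplifies, since $\Sen(\mu)(\emptyset)=\emptyset$ (the defining formula for $f(i)$ is empty when $i=\emptyset$) and $\emptyset;m = \emptyset$, to $(\theta_k;\mu,\emptyset)$. Hence the requirement $(\theta_k,\emptyset);(\mu,m) \leq (\theta'_k,j'_k)$ collapses to $\theta_k;\mu \leq \theta'_k$ together with the automatic $\emptyset \subseteq j'_k$. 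So neither the data $j'_k$ of the test cocone nor the chosen $m$ enter the mediating inequality: the set of mediating theory changes is exactly $\{(\mu,m) \mid \theta_k;\mu \leq \theta'_k,\ k=0,1,2,\ m \text{ a partial inclusion } \Sen(\mu)E \pto E'\}$. I would also note that the signature components $(\theta'_0,\theta'_1,\theta'_2)$ form a lax cocone of signature morphisms, read off from the signature part of the defining inequalities of $(\theta'_k,j'_k)$.

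Finally I would build a maximal mediating theory change. Since $(\theta_0,\theta_1,\theta_2)$ is a near pushout of the signature span, the set $\{\mu \mid \theta_k;\mu \leq \theta'_k,\ k=0,1,2\}$ has a maximal element $\mu^*$; I then pad it with the largest partial inclusion $m^* = \{(\rho,\rho) \mid \rho \in \Sen(\mu^*)E \cap E'\}$ from $\Sen(\mu^*)E$ to $E'$, which exists for any two sets. The pair $(\mu^*,m^*)$ lies in the mediating set, and it is maximal: if $(\mu^*,m^*) \leq (\mu',m')$ with $(\mu',m')$ mediating, then $\mu^* \leq \mu'$ forces $\mu^*=\mu'$ by maximality of $\mu^*$, after which $m^* \subseteq m'$ forces $m^*=m'$ since $m^*$ is already the largest partial inclusion $\Sen(\mu^*)E \pto E'$. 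The step to get right is precisely this last one: because the order on theory changes is the product of the signature order with inclusion of partial inclusions, and because the mediating inequality constrains only the signature component, maximality factors cleanly into maximality of $\mu$ (from the signature near pushout) and maximality of $m$ (from the existence of a largest partial inclusion). This factoring is exactly what fails for lax pushouts, where a \emph{unique} mediator cannot absorb the free choice of $m$, which is why it contrasts with Prop.~\ref{change-pushout-lift-prop}.
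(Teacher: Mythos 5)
Your proof is correct and is essentially the paper's own argument: both verify the lax cocone condition via the absorbing behaviour of the empty partial inclusion, then build the mediating theory change by taking a maximal signature-level mediator $\mu$ (obtained from the near pushout hypothesis applied to the signature components of the test cocone) padded with the largest partial inclusion $\Sen(\mu)E \pto E'$, whose domain is $\Sen(\mu)E \cap E'$. If anything, your final verification is slightly more careful than the paper's: the paper infers $\mu' \leq \mu$ for every signature-level mediator $\mu'$, which strictly speaking requires $\mu$ to be a greatest element of the mediating set, whereas you invoke maximality only in its correct order-theoretic sense ($\mu^* \leq \mu'$ with $\mu'$ mediating forces $\mu^* = \mu'$), which is exactly what the near pushout definition provides and requires.
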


\begin{proof}
First, it is to establish that we have a lax cocone as
$(\varphi_k,i_k);(\theta_k,\emptyset) = (\varphi_k;\theta_k,\emptyset)
\leq (\theta_0,\emptyset)$ for $k=1,2$. 

Let $(\theta'_k,j'_k)\co (\Sigma_k,E_k) \ra (\Sigma',E'), k=0,1,2$
be a lax cocone for the given span of theory changes.
Then let $\mu$ be the maximal signature morphism such that
$\theta_k;\mu \leq \theta'_k$, $k=0,1,2$. 
We define the partial inclusion $u \co \Sen(\mu)E \pto E'$ by 
$\DOM \ u = E' \cap \Sen(\mu)E$.
Then $(\theta_k,\emptyset);(\mu,u) = 
(\theta_k;\mu,\emptyset) \leq (\theta'_k,j'_k)$, $k=0,1,2$.

Now, for any $(\mu',u')$ such that $(\theta_k,\emptyset);(\mu',u')  
\leq (\theta'_k,j'_k)$, $k=0,1,2$ we have that $\theta_k;\mu' \leq
\theta'_k$, $k=0,1,2$.
By the maximality assumption on $\mu$ it follows that $\mu'\leq \mu$. 
Since $\DOM \ u' \subseteq \Sen(\mu')E \cap E'$, since $\mu' \leq
\mu$, by the monotonicity of $\Sen$ it follows that $\DOM \ u'
\subseteq E' \cap \Sen(\mu)E  = \DOM \ u$, hence $u' \subseteq u$. 
\end{proof}

The results of Propositions \ref{change-pushout-lift-prop} and
\ref{change-near-pushout-lift-prop} tell that the established
concepts of pushouts in $\3/2$-categories cannot be used for modelling
merges of software changes.  
A new concept is needed for that.

\section{Theory blending in $\3/2$-institutions}

Now we are in the position to be able to refine Goguen's approach to
conceptual blending within the context of $\3/2$-institutions.
This appears as a stepwise process as follows:  
\begin{enumerate}

\item The input is a \emph{consistent} span of theory
  morphisms $\varphi_1, \varphi_2$ in a $\3/2$-institution $\I$, which 
  means a consistent span in $\I^t$.  

\item Then we consider an appropriate lax cocone for the underlying
  span of signature morphisms that has weak model amalgamation:  
\[
\xymatrix @C-2em{
 & & \Sigma & & \\
\Sigma_1 \ar@{.>}[urr]^{\theta_1} & { \ \ \ \ \leqq} & &
{ \geq \ \ \ \ } &
\Sigma_2 \ar@{.>}[ull]_{\theta_2}\\
 &  & \Sigma_0 \ar[ull]^{\varphi_1} \ar[urr]_{\varphi_2} \ar@{.>}[uu]_{\theta_0}& &
}
\]

\item Next we lift it as in Prop.~\ref{lift-prop} to a lax cocone of
  theory morphisms: 
\[
\xymatrix @C-3em{
 & & (\Sigma,E) & & \\
(\Sigma_1,E_1) \ar@{.>}[urr]^{\theta_1} & { \ \ \ \ \ \leq} & &
{ \geq \ \ \ \ \ } &
(\Sigma_2,E_2) \ar@{.>}[ull]_{\theta_2}\\
 &  & (\Sigma_0,E_0) \ar[ull]^{\varphi_1} \ar[urr]_{\varphi_2} \ar@{.>}[uu]_{\theta_0}& &
}
\] 
By virtue of Prop.~\ref{amalg-prop} it follows that we obtain a oplax
cocone of theory morphisms also enjoying weak model amalgamation.
Since we started from a consistent span of theory morphisms, it
follows that the vertex of the blending cocone -- the new theory
$(\Sigma,E)$ -- is consistent. 

\end{enumerate}
This is a very general scheme that has a number of parameters.
\begin{itemize}

\item A choice of an appropriate $\3/2$-institution for modelling the
  respective concepts as theories, and their translations by theory
  morphisms. 

\item What is an `appropriate' lax cocone for the underlying span of
  signature morphisms is a challenging issue that seems to be
  difficult to answer at the general level; perhaps seeking for a
  \emph{precise} answer at a general level does not even make sense. 
  Some consider that the near pushout solution proposed by Goguen
  \cite{Goguen:Algebraic-Semiotics-1999} may be too permisive.
  Though what should be indisputable is the weak amalgamation property
  for the lax cocone. 

\end{itemize}





\section*{References}

\bibliographystyle{plain}
\bibliography{/Users/diacon/TEX/tex,ERC2016references}

\end{document}